\numberwithin{equation}{section}
\newcommand{\R}{\mathbb{R}}
\newtheorem{theorem}{Theorem}[section]
\newtheorem{lemma}[theorem]{Lemma}
\theoremstyle{definition}
\theoremstyle{remark}
\newtheorem{remark}[theorem]{Remark}
\numberwithin{equation}{section} \setcounter{page}{1}
\begin{document}
\title[Gaussian and Hermite Ornstein-Uhlenbeck processes]{Gaussian and Hermite Ornstein-Uhlenbeck processes}
\thanks{
Khalifa Es-Sebaiy\\
khalifa.essebaiy@ku.edu.kw\\ \\ Kuwait University, Faculty of
Science, Department of Mathematics, Kuwait City, Kuwait\\ \\ ORCID
ID: 0000-0003-4915-6850\\ \\
\\This project was funded by Kuwait
Foundation for the Advancement of Sciences (KFAS) under project
code: PR18-16SM-04.}
\author[K. Es-Sebaiy]{ Khalifa Es-Sebaiy}
\address{Department of Mathematics, Faculty of Science, Kuwait University, Kuwait}
\email{khalifa.essebaiy@ku.edu.kw}

\begin{abstract}
In the present paper we  study the asymptotic behavior  of the
auto-covariance function for Ornstein-Uhlenbeck (OU) processes
driven by Gaussian noises with stationary and non-stationary
increments and for Hermite OU processes. Our results are
generalizations of the corresponding results of  Cheridito et al.
\cite{CKM} and Kaarakka and Salminen \cite{KS}.
\end{abstract}

\maketitle

\medskip\noindent
{\bf Mathematics Subject Classifications (2020)}: Primary 60H10;
Secondary: 60G15, 60G10, 60G22.

\medskip\noindent
{\bf Keywords:} Gaussian and Hermite Ornstein-Uhlenbeck processes;
auto-covariance function; stationarity and ergodicity.

\allowdisplaybreaks

\renewcommand{\thefootnote}{\arabic{footnote}}

\section{Introduction}

Let  $B^H:= \left\{B^H_t, t\in\R\right\}$ denote a fractional
Brownian motion (fBm) with Hurst parameter $H\in(0,1)$, that is,
$B^H$ is a centered Gaussian process with covariance function
\[
E\left(B_{t}^{H} B_{s}^{H}\right)=\frac{1}{2}\left(|t|^{2 H}+|s|^{2
H}-|t-s|^{2 H}\right),\quad t, s \in \mathbb{R}. \] Consider the
fractional Ornstein-Uhlenbeck (fOU) process $X^H:= \left\{X^H_t,
t\in\R\right\}$ defined  as the solution to the Langevin equation
\begin{eqnarray}
  dX_{t}^H = -\theta X_t^H dt+dB_{t}^H,
  \label{FOU-model}
\end{eqnarray}
where $\theta\in \R$.\\
Notice that if $\theta>0$ and the initial condition
$X_0^H=\int_{-\infty}^0e^{\theta s}dB_{s}^H$, then the unique strong
solution $X^H$ of \eqref{FOU-model} is a stationary Gaussian
sequence and since $E(X_t^HX_0^H)\rightarrow0$ as
$t\rightarrow\infty$ (according to \cite[Theorem 2.3]{CKM}), a
criterion for stationary Gaussian processes (see \cite[Theorem
6.6]{lindgren}) gives that the process is ergodic.

The fOU process is one of the most studied and widely  applied
 stochastic process. It represents interesting model for stochastic dynamics with memory,
  with applications to e.g.  finance, telecommunication networks and physics.
In the finance context,  several researchers  in recent years have
been interested in studying statistical estimation problems for fOU
processes. The statistical analysis of equations driven by fBm
 is obviously more recent. The development of stochastic
calculus with respect to the fBm allowed to study such models. On
the other hand, the long-range dependence property makes the fBm
important driving noise in modeling several phenomena arising, for
instance, from volatility modeling in finance.
 Let us mention some  important results in this field
where  the volatility exhibits long-memory, which means that the
volatility today is correlated to past volatility values with a
dependence that decays very slowly. The authors of
\cite{CV12a,CV12b,CCR,CR98} considered the problem of option pricing
under a stochastic volatility model that exhibits long-range
dependence. More precisely they assumed that the dynamics of the
volatility are described by the equation \eqref{FOU-model}, where
the  Hurst parameter $H$ is greater than $1/2$. On the other hand,
the paper \cite{GJR} on rough volatility contends that the
short-time behavior indicates that the Hurst parameter $H$ in the
volatility is less than $1/2$. Furthermore, the drift parameter
estimation for  fractional-noise-driven Ornstein-Uhlenbeck
processes, in particular fOU, has also attracted the interest of
many researchers recently. We refer the interested readers to
\cite{HN,EV,DEV,SV,BET,KM,EE,AAE,DEKN} and references therein.

 There are two possible definitions for  the  fOU process. One can define
  this stochastic process as the solution of \eqref{FOU-model}.
Alternatively, one can define the fOU process as the Lamperti
transform of the fBm $B^H$:
\begin{equation}
\label{lamperti} Z^{\theta,H}_{t} = e ^{-\theta t} B^H _{
\frac{H}{\theta} e ^{\frac{\theta}{H} t}}, \quad t\geq 0,
\end{equation}
which is a stationary process. In the case when $H=\frac{1}{2}$,
$\theta>0$, the process (\ref{FOU-model}), with
$X_{0}=\int_{-\infty}^0e^{\theta t}dB^{\frac12}_t$, and the process
(\ref{lamperti})   have the same finite dimensional distributions,
thanks  to the L\'evy
characterization theorem.\\
 On the other hand, when
$H\not= \frac{1}{2}$, the probability distributions of
(\ref{FOU-model}) and (\ref{lamperti}) are different.  The process
given by (\ref{lamperti}) can be also be expressed as the solution
to the Langevin-type stochastic equation,
\begin{equation}
\label{FOUsk-theta}
 dX_t= -\theta X_t dt+dY_{t,B^H}^{(\theta)},\quad t\geq0,
\end{equation}
with initial condition $ X_0= B^H_{a^{(\theta)}_0}$, where the noise
$ \{Y_{t,B^H}^{(\theta)},t\geq0\}$ is given by the formula
$Y_{t,B^H}^{(\theta)}:=\int_0^t e^{-\theta s}dB^H_{a^{(\theta)}_s}$
with $a^{(\theta)}_t:=\frac{H}{\theta}e^{\frac{\theta}{H}t}$.
Motivated by this, \cite{KS} introduced a {\it fractional
Ornstein-Uhlenbeck process of the second kind} as the solution to a
 Langevin-type stochastic equation, namely
\begin{equation}
\label{FOUsk}
 dX_t= -\theta X_t dt+dY_{t,B^H}^{(1)},\quad t\geq0,
\end{equation}
with initial condition $ X _0= 0$, where the noise
$Y_{t,B^H}^{(1)}=\int_0^t e^{-s}dB^H_{a_s}$ with
$a_t:=a^{(1)}_t=He^{\frac{t}{H}}$. Whereas, the process given by
(\ref{FOU-model})  is usually called the {\it fractional
Ornstein-Uhlenbeck process of the first kind}.

Let us describe what is proved in \cite{CKM} and \cite{KS}, about
the processes \eqref{FOU-model} and \eqref{FOUsk}, respectively. In
\cite{CKM}, Cheridito et al.  showed  that the solution to Langevin
equation \eqref{FOU-model}, with $X_{0}=\int_{-\infty}^0e^{\theta
t}dB^{H}_t$, is stationary  and the decay of its auto-covariance
function behaves as a power function. However, in \cite{KS},
Kaarakka and Salminen  proved when $H>\frac12$ that the solution of
the Langevin equation \eqref{FOUsk}, with
$X_{0}=\int_{-\infty}^0e^{\theta t}dY_{t,B^H}^{(1)}$ is a stationary
process and its auto-covariance function  decays
exponentially.\\
The purpose of this paper is to provide a general approach to study
these properties for more general Gaussian and Hermite processes
that are of the form
\begin{equation*}
  dX_t=-\theta X_tdt+dG_t. \label{GH-OU}
\end{equation*}
We will study  stationarity and ergodicity properties, and   the
decay of the auto-covariance function of such processes. These facts
play an important role in stochastic analysis and in different
applications, and for these reasons the topic has been extensively
studied in the literature. For instance, they can be used to study
different parameters describing such Gaussian or Hermite processes.
Of particular interest for turbulence theory is the large and small
lags limit behavior of the auto-covariance function of the fOU
process, which has been proposed as a representation of homogeneous
Eulerian turbulent velocity, see \cite{shao}.

 The paper is organized as follows. In Section 2 we   provide some
keys lemmas needed in order to state the main results of the present
paper. In Section 3 we study the ergodicity and stationarity
properties and the decay of the auto-covariance function for Hermite
Ornstein-Uhlenbeck processes of the first kind and for Gaussian
Ornstein-Uhlenbeck processes of the second kind. In Section 4  we
end the paper by studying the auto-covariance function
 for Ornstein-Uhlenbeck processes driven by a Gaussian noise with non-stationary increments.\\

Throughout the paper,   the symbol $C$ stands for a generic
constant, whose value can change from one line to another. Moreover,
for $t \rightarrow  \infty$, we will write $f(t) \sim g(t)$,
provided that $f(t) / g(t) \rightarrow 1$.

\section{ Key lemmas}
Let $G:=\{G_t,t\in\R\}$ be a  measurable process defined on some
probability space $(\Omega, \mathcal{F}, P)$ (here, and throughout
the text, we assume that $\mathcal{F}$ is the sigma-field generated
by $G)$. The following assumption is required.
\begin{itemize}
\item[$(\mathcal{A})$]  $G_0=0$ a.s., and there exists a
constant $\gamma\in(0,1)$ such that  for every $q\geq2$ there is a
constant $c_q>0$ satisfying
\begin{eqnarray}E\left[\left|G_{t}-G_{s}\right|^{q}\right] \leq c_q|t-s|^{q
\gamma} \quad \mbox{ for all } s, t \in\R.\label{hypercontractivity}
\end{eqnarray}
\end{itemize} Note that, if
$\left(\mathcal{A}\right)$ holds, then by the Kolmogorov-Centsov
theorem, we can conclude that for all $\varepsilon \in(0, \gamma)$,
the process $G$ admits a modification with
$(\gamma-\varepsilon)-$H\"older continuous paths, still denoted $G$
in the sequel.
\begin{remark}\label{wiener-chaos-hypercontractivity} Corollary 2.8.14 in \cite{NP-book} tells us that, inside a fixed Wiener chaos, all the
$L^{q}$-norms are equivalent. Hence, if $G$ is a Gaussian or Hermite
process, the assumption $\left(\mathcal{A}\right)$ is equivalent to
$G_0=0$ a.s., and
\begin{eqnarray*}E\left[\left|G_{t}-G_{s}\right|^{2}\right] \leq
C|t-s|^{2 \gamma} \quad \mbox{ for all } s, t \in\R.
\end{eqnarray*}
\end{remark}

Let us recall some basic elements of pathwise Riemann-Stieltjes
integral, which are helpful for some of the arguments we use. For
any $\alpha\in (0,1]$, a function $f:[a,b]\to\R$ is said to be
$\alpha$-H\"older continuous function if
\[\sup_{a\leq s<t\leq b}\frac{|f(t)-f(s)|}{|t-s|^{\alpha}}<\infty.
\]
If $f, g:[a, b] \longrightarrow \mathbb{R}$ are H\"older continuous
functions of orders $\alpha$ and $\beta$ respectively with
$\alpha+\beta>1$, Young \cite{Young} proved that the
Riemann-Stieltjes integral $\int_{a}^{b} f_{s} d g_{s}$ exists.
Hence, if $G$ is a process satisfying $(\mathcal{A})$, then   the
stochastic integral $\int_{a}^{b} u_{s} d G_{s}$ is well defined as
a pathwise Riemann-Stieltjes integral provided that the trajectories
of the process $\left\{u_{t}, t \in\R\right\}$ are $\alpha$-H\"older
continuous functions on any finite interval for some
$\alpha>1-\gamma$.
\begin{lemma}\label{lemma-Riemann integral} Let $\{G_t,t\in\R\}$ be a measurable process satisfying
the assumption $\left(\mathcal{A}\right)$ and let $\xi\in
L^0(\Omega)$. Then, for every $\theta\in\R$ and $-\infty< s<t
<\infty$,
\begin{eqnarray}
 \int_s^te^{\theta r}dG_r=e^{\theta t}G_t-e^{\theta s}G_s-\theta \int_s^te^{\theta r}G_rdr.\label{IBP-G}
\end{eqnarray}
In addition, if we assume $\theta>0$, we have  for every $t\in\R$,
$\int_{-\infty}^te^{\theta r}dG_r$ is well defined as a
Riemann-Stieltjes integral and
\begin{eqnarray}
 \int_{-\infty}^te^{\theta r}dG_r=e^{\theta t}G_t -\theta \int_{-\infty}^te^{\theta r}G_rdr,\label{IBP-G-infty}
\end{eqnarray}
and the unique continuous solution to the equation
\begin{eqnarray}
X_t=\xi-\theta \int_{0}^{t} X_s d s+  G_{t}, \quad t \geq
0,\label{linear-GOU}
\end{eqnarray}
is given by
\begin{eqnarray}
X_t=e^{-\theta t}\left(\xi+ \int_{0}^{t} e^{\theta s} d G_{s}
\right), \quad t \geq 0.\label{solution-linear-GOU}
\end{eqnarray}
\end{lemma}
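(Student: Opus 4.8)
The plan is to establish the three assertions in sequence, beginning with the integration-by-parts formula \eqref{IBP-G} on a finite interval, then passing to the limit to obtain \eqref{IBP-G-infty}, and finally using \eqref{IBP-G} to verify that \eqref{solution-linear-GOU} solves \eqref{linear-GOU} together with a uniqueness argument. The key observation underlying everything is that, under $(\mathcal{A})$, the process $G$ has $(\gamma-\varepsilon)$-H\"older paths for small $\varepsilon>0$, while $r\mapsto e^{\theta r}$ is $C^\infty$ (hence locally Lipschitz, i.e. $1$-H\"older), so the sum of H\"older exponents exceeds $1$ and Young's theorem guarantees that $\int_s^t e^{\theta r}\,dG_r$ exists as a pathwise Riemann-Stieltjes integral.

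For \eqref{IBP-G} I would invoke the Young integration-by-parts (product) formula: if $f,g$ are H\"older continuous with exponents summing to more than $1$, then $\int_s^t f\,dg + \int_s^t g\,df = f(t)g(t) - f(s)g(s)$. Applying this with $f_r = e^{\theta r}$ and $g_r = G_r$ gives
\begin{eqnarray*}
\int_s^t e^{\theta r}\,dG_r = e^{\theta t}G_t - e^{\theta s}G_s - \int_s^t G_r\,d(e^{\theta r}),
\end{eqnarray*}
and since $r\mapsto e^{\theta r}$ is $C^1$ the last integral is an ordinary Riemann integral equal to $\theta\int_s^t e^{\theta r}G_r\,dr$, which is exactly \eqref{IBP-G}. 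For \eqref{IBP-G-infty} I would let $s\to-\infty$ in \eqref{IBP-G} with $\theta>0$. The boundary term $e^{\theta s}G_s$ tends to $0$ almost surely: by $(\mathcal{A})$ and the Borel-Cantelli / Kolmogorov growth estimate, $|G_s|$ grows at most polynomially (indeed like $|s|^{\gamma-\varepsilon}$ up to logarithmic factors) as $s\to-\infty$, which is killed by $e^{\theta s}\to 0$. The convergence of $\int_{-\infty}^t e^{\theta r}G_r\,dr$ follows from the same polynomial bound against the exponential weight, so the improper integral is well defined and \eqref{IBP-G-infty} holds.

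For the final claim I would first verify that \eqref{solution-linear-GOU} satisfies \eqref{linear-GOU}. Writing $X_t = e^{-\theta t}\xi + e^{-\theta t}\int_0^t e^{\theta s}\,dG_s$ and substituting \eqref{IBP-G} (with $s=0$) to replace the integral by $e^{\theta t}G_t - \theta\int_0^t e^{\theta s}G_s\,ds$, one obtains $X_t = e^{-\theta t}\xi + G_t - \theta e^{-\theta t}\int_0^t e^{\theta s}G_s\,ds$. A direct computation, differentiating or integrating the relation $-\theta\int_0^t X_s\,ds$ using this explicit form, recovers \eqref{linear-GOU}; this is a routine manipulation of the exponential factors. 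For uniqueness, I would take two continuous solutions $X,\tilde X$ and set $D_t = X_t-\tilde X_t$; then $D$ is continuous, $D_0=0$, and satisfies the homogeneous equation $D_t = -\theta\int_0^t D_s\,ds$, so $D$ is $C^1$ with $D_t' = -\theta D_t$, whence $D\equiv 0$ by Gr\"onwall's inequality (or by directly solving the linear ODE).

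\emph{The main obstacle} I anticipate is the almost-sure decay of the boundary term $e^{\theta s}G_s$ as $s\to-\infty$, which is needed for \eqref{IBP-G-infty}. Assumption $(\mathcal{A})$ gives moment bounds only, not a pathwise growth rate directly; converting \eqref{hypercontractivity} into an almost-sure bound on $\sup_{|s|\le n}|G_s|$ requires a Kolmogorov-type maximal estimate applied on dyadic intervals $[-2^{n+1},-2^n]$ together with Borel-Cantelli, exploiting that the moment bound holds for \emph{all} $q\ge 2$ so the polynomial growth exponent can be pushed arbitrarily close to $\gamma$. Once this pathwise growth estimate is in hand, the dominated-convergence argument for both the boundary term and the improper integral is straightforward.
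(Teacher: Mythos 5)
Your proposal is correct and follows essentially the same route as the paper: Young integration by parts for \eqref{IBP-G}, an almost-sure polynomial growth bound on $G$ (the paper cites the analogue of your Kolmogorov/Borel--Cantelli argument from \cite{EE}) to pass to the limit $s\to-\infty$ for \eqref{IBP-G-infty}, and elementary linear ODE reasoning for the last part. The only cosmetic difference is that the paper obtains existence and uniqueness simultaneously by solving the ODE for $u(t)=\int_0^t X_s\,ds$, whereas you verify the candidate and prove uniqueness separately via Gr\"onwall; both are standard and equivalent.
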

\begin{proof}Since $G$ satisfies  $\left(\mathcal{A}\right)$, the integral $\int_s^te^{\theta r}dG_r$ as stated above
is well defined as a pathwise Riemann-Stieltjes integral. So, the
claim \eqref{IBP-G} can be immediately obtained by integrating by
parts (see, e.g., \cite[Theorem 2.21]{WZ}).\\
Now, let us prove \eqref{IBP-G-infty}. Suppose that $\theta>0$.
Using the same argument as in the proof of (2.7) in \cite{EE}, we
have, for any $\gamma<\delta$, $\lim_{|t| \rightarrow
\infty}\frac{G_t}{|t|^{\delta}}=0$  almost surely. This implies that
for all $t\in\R$, $\int_{-\infty}^te^{\theta r}G_rdr$ exists as a
Riemann integral, which, by \cite[Theorem 2.21]{WZ}, implies that
the Riemann-Stieltjes integral $\int_{-\infty}^te^{\theta r}dG_r$
exists
too and \eqref{IBP-G-infty} holds.\\
Finally, a continuous function $X$ is  solution to the equation
\eqref{linear-GOU} if and only if the function $ u(t)=\int_{0}^{t}
X_s d s, \ t \geq 0,$  is solution to the linear differential
equation
$$
u^{\prime}(t)=-\theta u(t)+\xi+ G_{t}, \quad u(0)=0,
$$
which has the unique solution
$$
u(t)=e^{-\theta t} \int_{0}^{t} e^{\theta s}\left(\xi + G_{s}
\right) d s, \quad t \geq 0.
$$
As a consequence, the unique continuous solution  $X$ that solves
\eqref{linear-GOU} is given by
\begin{eqnarray*}
X_t&=&-\theta e^{-\theta t} \int_{0}^{t} e^{\theta s}\left(\xi +
G_{s} \right) d s+\xi + G_{t}\\
&=&e^{-\theta t}\left(\xi+ \int_{0}^{t} e^{\theta s} d G_{s}
\right), \quad t \geq 0,
\end{eqnarray*}
where the latter equality comes from \eqref{IBP-G}. This completes
the proof.
\end{proof}

In what follows   $R_G(s,t):=E\left(G_sG_t\right),\ s,t\in\R,$
denotes the covariance function of the process $G$.

\begin{lemma}\label{lemma-cov}Let $\theta\in \R$ and $-\infty<s<t\leq u<v<\infty$. Assume that $(\mathcal{A})$ holds and
 $\frac{\partial^2R_G}{\partial y\partial x}$ is a continuous function on $\R^2 \setminus\{(x,y)|x=y\}$.  Then
\begin{equation}
E\left(\int_s^te^{\theta x}dG_x\int_u^ve^{\theta
y}dG_y\right)=\int_u^v\int_s^te^{\theta x}e^{\theta
y}\frac{\partial^2R_G}{\partial y\partial x}(x,y)
dxdy,\label{cov-wienner-integ}
\end{equation}
 provided that the integral on the right-hand side converges.\\
In addition, if we assume $\theta>0$,  the identity
\eqref{cov-wienner-integ} is also valid for   $s=-\infty$. In other
words, for every $\theta>0$ and $-\infty<t\leq u<v<\infty$,
\begin{equation}
E\left(\int_{-\infty}^te^{\theta x}dG_x\int_u^ve^{\theta
y}dG_y\right)=\int_u^v\int_{-\infty}^te^{\theta x}e^{\theta
y}\frac{\partial^2R_G}{\partial y\partial x}(x,y)
dxdy,\label{cov-wienner-integ-infty}
\end{equation}
 provided that the integral on the right-hand side converges.
\end{lemma}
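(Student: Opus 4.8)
The plan is to establish the covariance identity \eqref{cov-wienner-integ} first on finite intervals by reducing the Riemann-Stieltjes integrals to ordinary Riemann integrals via the integration-by-parts formula \eqref{IBP-G} from Lemma \ref{lemma-Riemann integral}, then compute the resulting covariance of Riemann integrals by passing the expectation inside (Fubini), at which point the second mixed partial of $R_G$ appears naturally. The extension to $s=-\infty$ will then follow by a dominated/monotone passage to the limit, justified by the same almost-sure growth bound on $G$ used in the proof of \eqref{IBP-G-infty}.

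**First I would** rewrite each of the two stochastic integrals using \eqref{IBP-G}:
\begin{eqnarray*}
\int_s^te^{\theta x}dG_x=e^{\theta t}G_t-e^{\theta s}G_s-\theta \int_s^te^{\theta x}G_xdx,
\end{eqnarray*}
and similarly for the $y$-integral on $[u,v]$. Taking the product and then the expectation, every term becomes an expectation of a product of values $G_a G_b$ or of a value against a Riemann integral of $G$. Since the integrands are continuous in the path and, under $(\mathcal{A})$, the process has finite second moments with $R_G$ continuous, Fubini's theorem lets me interchange $E$ with the $dx$ and $dy$ integrals. This turns the whole expression into a combination of boundary terms $R_G(t,v)$, $R_G(s,v)$, etc., and single/double integrals of $R_G$ over the relevant rectangles and edges.

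**The key step** is then to recognize that this combination is exactly the iterated integral of the mixed second derivative. Concretely, I would verify the integration-by-parts identity
\begin{eqnarray*}
\int_u^v\int_s^te^{\theta x}e^{\theta y}\frac{\partial^2R_G}{\partial y\partial x}(x,y)\,dxdy
\end{eqnarray*}
by differentiating under the integral sign in reverse: integrate $\frac{\partial^2 R_G}{\partial y \partial x}$ first in $x$ over $[s,t]$ to produce $\partial_y R_G(t,y)-\partial_y R_G(s,y)$ (this is where continuity of $\frac{\partial^2R_G}{\partial y\partial x}$ off the diagonal is used, since the rectangle $[s,t]\times[u,v]$ lies in $\R^2\setminus\{x=y\}$ when $t\le u$), then integrate in $y$; the $e^{\theta x}e^{\theta y}$ weights are handled by a two-dimensional integration by parts that reproduces precisely the boundary-plus-integral expression obtained from the expectation computation. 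Matching the two expressions term by term gives \eqref{cov-wienner-integ}.

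**The hard part will be** the passage to $s=-\infty$ for $\theta>0$. Here I would invoke, exactly as in the proof of \eqref{IBP-G-infty}, the almost-sure bound $\lim_{|x|\to\infty}G_x/|x|^{\delta}=0$ for any $\delta>\gamma$, which controls the second moment so that $E(G_x^2)\le C|x|^{2\gamma}$; combined with the exponential weight $e^{\theta x}$ with $\theta>0$, this forces the boundary term $e^{\theta s}E(G_s\,\cdot)$ and the tail of the Riemann integral $\int_{-\infty}^s$ to vanish as $s\to-\infty$. Dominated convergence on the left (using the $L^2$-convergence of $\int_s^t$ to $\int_{-\infty}^t$) and on the right (using the stated hypothesis that the improper double integral converges) then upgrades \eqref{cov-wienner-integ} to \eqref{cov-wienner-integ-infty}, completing the proof.
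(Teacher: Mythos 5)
Your proposal follows essentially the same route as the paper: both reduce the stochastic integrals to boundary terms plus Riemann integrals via \eqref{IBP-G}, pass the expectation inside to obtain a combination of values and integrals of $R_G$, and then use ordinary integration by parts in each variable to identify that combination with the double integral of $\frac{\partial^2 R_G}{\partial y\partial x}$, before extending to $s=-\infty$ via \eqref{IBP-G-infty}. The only cosmetic difference is that the paper first treats the adjacent case $t=u$ and obtains $t<u$ by additivity of the integral, whereas you work directly on the rectangle $[s,t]\times[u,v]$; your limiting argument for $s=-\infty$ is in fact spelled out in more detail than the paper's one-line remark.
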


\begin{proof}Let us first  suppose  $t= u$. Using \eqref{IBP-G}, we have
\begin{eqnarray*}
&&E\left(\int_s^te^{\theta x}dG_x\int_t^ve^{\theta
y}dG_y\right)\\&=&e^{\theta t}e^{\theta v}R_G(t,v)-e^{2\theta t}
R_G(t,t)-e^{\theta s}e^{\theta v}R_G(s,v)+e^{\theta s}e^{\theta
t}R_G(s,t)\\&&-\theta e^{\theta t}\int_t^ve^{\theta y}R_G(t,y)
dy+\theta e^{\theta s}\int_t^ve^{\theta y}R_G(s,y) dy-\theta
e^{\theta v}\int_s^te^{\theta x}R_G(x,v) dx\\&&+\theta e^{\theta
t}\int_s^te^{\theta x}R_G(x,t) dx+\theta^2\int_t^v\int_s^te^{\theta
x}e^{\theta y}R_G(x,y)dxdy.
\end{eqnarray*}
On the other hand, by integrating by parts, we have
\begin{eqnarray*}
-\theta e^{\theta v} \int_s^te^{\theta x}R_G(x,v) dx=-e^{\theta
v}e^{\theta t}R_G(t,v)+e^{\theta v}e^{\theta s}R_G(s,v)+e^{\theta
v}\int_s^te^{\theta x}\frac{\partial R_G}{\partial x}(x,v) dx,
\end{eqnarray*}
\begin{eqnarray*}
\theta e^{\theta t}\int_s^te^{\theta x}R_G(x,t) dx=e^{2\theta
t}R_G(t,t)-e^{\theta t}e^{\theta s}R_G(s,t)-e^{\theta
t}\int_s^te^{\theta x}\frac{\partial R_G}{\partial x}(x,t) dx,
\end{eqnarray*}
and
\begin{eqnarray*}\theta^2\int_t^v\int_s^te^{\theta
x}e^{\theta y}R_G(x,y)dxdy&=&\theta\int_t^ve^{\theta y}e^{\theta
t}R_G(t,y)dy-\theta\int_t^ve^{\theta y}e^{\theta
s}R_G(s,y)dy\\&&-\theta\int_t^v\int_s^te^{\theta x}e^{\theta
y}\frac{\partial R_G}{\partial x}(x,y) dxdy.
\end{eqnarray*}
These equalities imply
\begin{eqnarray*}
E\left(\int_s^te^{\theta x}dG_x\int_t^ve^{\theta
y}dG_y\right)&=&e^{\theta v}\int_s^te^{\theta x}\frac{\partial
R_G}{\partial x}(x,v) dx-e^{\theta t}\int_s^te^{\theta
x}\frac{\partial R_G}{\partial x}(x,t)
dx\\&&-\theta\int_t^v\int_s^te^{\theta x}e^{\theta y}\frac{\partial
R_G}{\partial x}(x,y) dxdy.
\end{eqnarray*}
Further,
\begin{eqnarray*}
-\theta\int_t^v\int_s^te^{\theta x}e^{\theta y}\frac{\partial
R_G}{\partial x}(x,y) dxdy&=&-e^{\theta v}\int_s^te^{\theta
x}\frac{\partial R_G}{\partial x}(x,v) dx+e^{\theta
t}\int_s^te^{\theta x}\frac{\partial R_G}{\partial x}(x,t)
dx\\&&+\int_t^v\int_s^te^{\theta x}e^{\theta
y}\frac{\partial^2R_G}{\partial y\partial x}(x,y) dxdy,
\end{eqnarray*}
which proves \eqref{cov-wienner-integ} for $t= u$.\\
Let us now  suppose  $t< u$. From above  we deduce that
\begin{eqnarray*}
E\left(\int_s^te^{\theta x}dG_x\int_u^ve^{\theta
y}dG_y\right)&=&E\left(\int_s^te^{\theta x}dG_x\int_t^ve^{\theta
y}dG_y\right)-E\left(\int_s^te^{\theta x}dG_x\int_t^ue^{\theta
y}dG_y\right)\\&=&\int_t^v\int_s^te^{\theta x}e^{\theta
y}\frac{\partial^2R_G}{\partial y\partial x}(x,y)
dxdy-\int_t^u\int_s^te^{\theta x}e^{\theta
y}\frac{\partial^2R_G}{\partial y\partial x}(x,y) dxdy\\&=&
\int_u^v\int_s^te^{\theta x}e^{\theta
y}\frac{\partial^2R_G}{\partial y\partial x}(x,y) dxdy,
\end{eqnarray*}
which completes the proof of \eqref{cov-wienner-integ}.\\
 Finally, using \eqref{IBP-G-infty} and following the same arguments as above, the claim \eqref{cov-wienner-integ-infty} follows.
 \end{proof}
 \begin{remark}Lemma \ref{lemma-cov} was proved in \cite[Lemma 2.1]{CKM} in
 the case when $G=B^H$ is a fBm with Hurst
 parameter $H\in (0, \frac12) \cup(\frac12, 1]$.
 \end{remark}

 \begin{lemma}\label{lemma-tec1}Let  $\rho$ be a positive measurable function on $\R$. Then, for all
 $0<s<t$,
 \begin{eqnarray*}
e^{-\theta t}e^{-\theta s}\int_s^t\int_0^se^{\theta x}e^{\theta
y}\rho(y-x)dxdy&\leq&e^{-\theta
(t-s)}\int_0^{t-s}\int_{-\infty}^0e^{\theta x}e^{\theta
y}\rho(y-x)dxdy,
\end{eqnarray*}
provided that the integral on the right-hand side converges.
\end{lemma}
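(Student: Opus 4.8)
The plan is to reduce the left-hand side to an integral with exactly the same integrand as the right-hand side but over a smaller region, and then invoke monotonicity of the integral for a nonnegative integrand. Concretely, I would first translate both integration variables by $s$, setting $u = x - s$ and $v = y - s$. Under this substitution the inner variable $x \in [0,s]$ becomes $u \in [-s, 0]$, the outer variable $y \in [s,t]$ becomes $v \in [0, t-s]$, the Jacobian is $1$, and crucially $\rho(y-x) = \rho(v-u)$ is left unchanged since $\rho$ depends only on the difference of its arguments.

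The second step is to collect the exponential prefactors. After the substitution $e^{\theta x}e^{\theta y} = e^{2\theta s}e^{\theta u}e^{\theta v}$, so the overall prefactor becomes $e^{-\theta t}e^{-\theta s}e^{2\theta s} = e^{-\theta(t-s)}$, exactly matching the constant on the right-hand side. Thus the left-hand side equals
\[
e^{-\theta(t-s)}\int_0^{t-s}\int_{-s}^0 e^{\theta u}e^{\theta v}\rho(v-u)\,du\,dv,
\]
which, after renaming the dummy variables $u,v$ back to $x,y$, has precisely the integrand appearing in the right-hand side, the only difference being the inner domain $[-s,0]$ in place of $(-\infty,0]$.

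Finally, I would observe that the integrand $e^{\theta u}e^{\theta v}\rho(v-u)$ is nonnegative, because $\rho$ is positive and the exponentials are positive. Since $[-s,0] \subseteq (-\infty, 0]$, enlarging the inner integration domain from $[-s,0]$ to $(-\infty,0]$ can only increase the value of the integral. Dividing through by the common positive factor $e^{-\theta(t-s)}$ then yields the claimed inequality.

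There is no genuine obstacle here: the result is essentially a translation of variables followed by enlarging the domain of a nonnegative integrand. The only point requiring a little care is verifying that the exponential prefactors collapse to exactly $e^{-\theta(t-s)}$, since this is what makes the two integrands coincide; this should be checked directly in the exponent and holds regardless of the sign of $\theta$. The hypothesis that the right-hand integral converges ensures the comparison is meaningful, the left-hand side being dominated by a finite quantity.
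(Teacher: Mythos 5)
Your proposal is correct and is essentially identical to the paper's own proof: the same translation $u=x-s$, $v=y-s$, the same collapse of the exponential prefactors to $e^{-\theta(t-s)}$, and the same enlargement of the inner domain from $[-s,0]$ to $(-\infty,0]$ using positivity of $\rho$.
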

\begin{proof}By change of variables $u=x-s$ and $v=y-s$, we obtain
\begin{eqnarray*}
e^{-\theta t}e^{-\theta s}\int_s^t\int_0^se^{\theta x}e^{\theta
y}\rho(y-x)dxdy&=&e^{-\theta (t-s)}\int_0^{t-s}\int_{-s}^0e^{\theta
u}e^{\theta v}\rho(v-u)dudv\\&\leq&e^{-\theta
(t-s)}\int_0^{t-s}\int_{-\infty}^0e^{\theta u}e^{\theta
v}\rho(v-u)dudv,
\end{eqnarray*}
 which finishes the proof.
\end{proof}

 \begin{lemma}Let $\gamma\in(0,\frac12)\cup(\frac12,1)$ and $\theta>0$. Then,  as $t\rightarrow\infty$,
 \begin{eqnarray}
e^{-\theta t}\int_0^t\int_{-\infty}^0e^{\theta x}e^{\theta
y}(y-x)^{2\gamma-2}dxdy\sim
\frac{t^{2\gamma-2}}{\theta^2}.\label{equi1}
\end{eqnarray}
As a consequence,  there exists a constant $C>0$ that depends only
on $\theta$ and $\gamma$ such that for every   $t-s>2$,
  \begin{eqnarray}
e^{-\theta t}e^{-\theta s}\int_s^t\int_0^se^{\theta x}e^{\theta
y}(y-x)^{2\gamma-2}dxdy&\leq&C (t-s)^{2\gamma-2}.\label{ineq1}
\end{eqnarray}
  \end{lemma}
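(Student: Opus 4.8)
The plan is to establish the asymptotic equivalence \eqref{equi1} first and then derive the inequality \eqref{ineq1} as an immediate consequence by combining \eqref{equi1} with Lemma \ref{lemma-tec1}. For \eqref{equi1}, I would start by performing the inner integration over $x$ explicitly. Writing
\[
I(t):=e^{-\theta t}\int_0^t\int_{-\infty}^0 e^{\theta x}e^{\theta y}(y-x)^{2\gamma-2}\,dx\,dy,
\]
I would fix $y\in[0,t]$ and make the substitution $w=y-x$ in the inner integral, which turns $\int_{-\infty}^0 e^{\theta x}(y-x)^{2\gamma-2}\,dx$ into $e^{\theta y}\int_y^{\infty} e^{-\theta w} w^{2\gamma-2}\,dw$. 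Thus
\[
I(t)=e^{-\theta t}\int_0^t e^{2\theta y}\left(\int_y^{\infty} e^{-\theta w} w^{2\gamma-2}\,dw\right) dy.
\]
The interior integral is an upper incomplete-gamma-type quantity, and the key analytic input is its large-argument behavior: integration by parts (or the standard asymptotics of the incomplete gamma function) gives $\int_y^{\infty} e^{-\theta w} w^{2\gamma-2}\,dw\sim \theta^{-1} e^{-\theta y} y^{2\gamma-2}$ as $y\to\infty$.

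Substituting this asymptotic back, the integrand of the outer integral behaves like $\theta^{-1}e^{\theta y}y^{2\gamma-2}$ for large $y$, so that
\[
I(t)\sim \frac{1}{\theta}\,e^{-\theta t}\int_0^t e^{\theta y} y^{2\gamma-2}\,dy .
\]
To finish, I would analyze $e^{-\theta t}\int_0^t e^{\theta y}y^{2\gamma-2}\,dy$ by a second integration by parts, isolating the boundary term $\theta^{-1}t^{2\gamma-2}$ at $y=t$; the remaining integral $e^{-\theta t}\int_0^t e^{\theta y}y^{2\gamma-3}\,dy$ is of smaller order, namely $o(t^{2\gamma-2})$, because the exponential weight concentrates mass near $y=t$ where the polynomial factor $y^{2\gamma-3}$ is negligible relative to $t^{2\gamma-2}$. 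This yields $e^{-\theta t}\int_0^t e^{\theta y}y^{2\gamma-2}\,dy\sim \theta^{-1}t^{2\gamma-2}$ and hence $I(t)\sim \theta^{-2}t^{2\gamma-2}$, which is exactly \eqref{equi1}.

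For the consequence \eqref{ineq1}, I would invoke Lemma \ref{lemma-tec1} with $\rho(r)=r^{2\gamma-2}$ (which is positive and measurable on the relevant domain since $y-x>0$ throughout), giving
\[
e^{-\theta t}e^{-\theta s}\int_s^t\int_0^s e^{\theta x}e^{\theta y}(y-x)^{2\gamma-2}\,dx\,dy\leq e^{-\theta(t-s)}\int_0^{t-s}\int_{-\infty}^0 e^{\theta x}e^{\theta y}(y-x)^{2\gamma-2}\,dx\,dy = I(t-s).
\]
Since \eqref{equi1} shows $I(\tau)\sim\theta^{-2}\tau^{2\gamma-2}$ as $\tau\to\infty$, the function $\tau\mapsto I(\tau)/\tau^{2\gamma-2}$ is bounded on $[2,\infty)$, so there is a constant $C>0$ depending only on $\theta$ and $\gamma$ with $I(t-s)\leq C(t-s)^{2\gamma-2}$ for all $t-s>2$, establishing \eqref{ineq1}.

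I expect the main obstacle to be the careful justification of the two asymptotic reductions — first that $\int_y^\infty e^{-\theta w}w^{2\gamma-2}\,dw\sim\theta^{-1}e^{-\theta y}y^{2\gamma-2}$ and then that the leftover integral after the outer integration by parts is genuinely $o(t^{2\gamma-2})$. The delicate point is that $2\gamma-2<0$, so $w^{2\gamma-2}$ is singular at the origin; however the inner integral is taken over $w\geq y$ with $y$ large, so the singularity never enters the asymptotic analysis, and near-origin behavior only affects a bounded constant that is irrelevant to the leading term. One must also handle the passage from the pointwise asymptotic equivalence of the integrands to the asymptotic equivalence of their integrals, which is legitimate here because the integrands are eventually positive and the dominant contribution comes from the region $y\to\infty$; a standard comparison argument (splitting the outer integral at a fixed large $y_0$ and controlling the finite piece) makes this rigorous.
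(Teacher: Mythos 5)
Your argument is correct in overall substance and, for the second half, identical to the paper's: both derive \eqref{ineq1} from \eqref{equi1} via Lemma \ref{lemma-tec1} with $\rho(r)=r^{2\gamma-2}$. Where you diverge is in the proof of \eqref{equi1}. After the substitution $u=y-x$, the paper applies Fubini, evaluates the inner $y$-integral exactly as $\frac{1}{2\theta}\left(e^{2\theta(u\wedge t)}-1\right)$, splits the resulting one-dimensional integral at $u=t$ into two pieces $A_{1,t}$ and $A_{2,t}$, and shows each is $\sim t^{2\gamma-2}/(2\theta^{2})$ by L'H\^opital's rule. You instead keep the order of integration, replace the inner integral $\int_y^{\infty}e^{-\theta w}w^{2\gamma-2}\,dw$ by its incomplete-gamma asymptotics $\theta^{-1}e^{-\theta y}y^{2\gamma-2}$, and finish with an integration by parts; the two routes are comparable in length and rest on the same elementary asymptotics. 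The one real imprecision in yours is the displayed intermediate
\[
I(t)\sim \frac{1}{\theta}\,e^{-\theta t}\int_0^t e^{\theta y}y^{2\gamma-2}\,dy ,
\]
whose right-hand side is infinite when $\gamma<\frac12$, because then $2\gamma-2<-1$ and $y^{2\gamma-2}$ is not integrable at $y=0$ (note the singularity you discuss is that of the \emph{inner} integrand at $w=0$, which indeed never matters, but the \emph{outer} comparison integrand $e^{\theta y}y^{2\gamma-2}$ has its own non-integrable singularity at $y=0$). The fix is exactly the cutoff you mention at the end: the equivalence must be stated with $\int_{y_0}^t$ for a fixed $y_0>0$, the contribution of $\int_0^{y_0}$ to $I(t)$ being $O(e^{-\theta t})$ since $g(y)=\int_y^\infty e^{-\theta w}w^{2\gamma-2}\,dw=O(y^{2\gamma-1})$ near $0$, and the integration by parts must then be performed on $[y_0,t]$. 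With that cutoff actually inserted (and noting that $\int_{y_0}^t e^{\theta y}y^{2\gamma-2}\,dy\to\infty$, which licenses passing from equivalence of integrands to equivalence of integrals), your proof is complete. The paper's Fubini-based organization avoids this issue because every integral it writes down is manifestly finite.
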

  \begin{proof}First we prove \eqref{equi1}. Let $t>2$.
  Making the change of variables $u=y-x$, we get
\begin{eqnarray}
e^{-\theta t}\int_0^t\int_{-\infty}^0e^{\theta x}e^{\theta
y}(y-x)^{2\gamma-2}dxdy&=&e^{-\theta
t}\int_0^t\int_y^{\infty}e^{-\theta u}e^{2\theta
y}u^{2\gamma-2}dudy\nonumber\\
&=&e^{-\theta t}\int_0^{\infty}due^{-\theta
u}u^{2\gamma-2}\int_{0}^{u\wedge t}dye^{2\theta y}\nonumber
\\
&=&\frac{e^{-\theta t}}{2\theta}\int_0^{\infty}e^{-\theta
u}u^{2\gamma-2}\left(e^{2\theta (u\wedge t)}-1\right)du\nonumber
\\
&=&\frac{e^{-\theta t}}{2\theta}\int_0^{t}e^{-\theta
u}u^{2\gamma-2}\left(e^{2\theta u}-1\right)du\nonumber
\\
&&+\frac{e^{-\theta t}}{2\theta}\int_t^{\infty}e^{-\theta
u}u^{2\gamma-2}\left(e^{2\theta t}-1\right)du\nonumber
\\
&=:&A_{1,t}+A_{2,t}.\label{A}
\end{eqnarray}
Furthermore,
\begin{eqnarray}
A_{1,t}&=&\frac{e^{-\theta t}}{2\theta}\int_0^{t}e^{-\theta
u}u^{2\gamma-2}\left(e^{2\theta u}-1\right)du\nonumber\\
&=&\frac{e^{-\theta t}}{2\theta}\int_0^{1}e^{-\theta
u}u^{2\gamma-2}\left(e^{2\theta u}-1\right)du +\frac{e^{-\theta
t}}{2\theta}\int_1^{t}e^{-\theta u}u^{2\gamma-2}\left(e^{2\theta
u}-1\right)du.\label{A1_ineq1}
\end{eqnarray}
Since $\frac{e^{2\theta u}-1}{u}\longrightarrow2\theta$  as
$u\rightarrow0$, then the function $\frac{e^{2\theta u}-1}{u}$ is
bounded on (0,1], that is, $\sup_{u\in(0,1]}\frac{e^{2\theta
u}-1}{u}<C<\infty$. This implies that
\begin{eqnarray}
\int_0^{1}e^{-\theta u}u^{2\gamma-2}\left(e^{2\theta
u}-1\right)du&\leq&C\int_0^{1}e^{-\theta u}u^{2\gamma-1}du\nonumber\\
&\leq&C\int_0^{1}u^{2\gamma-1}du
\nonumber\\
&=&\frac{C}{2\gamma}.\label{A1_ineq2}
\end{eqnarray}
 On the other hand, as $t\rightarrow\infty$,
\begin{eqnarray}
\frac{e^{-\theta t}}{2\theta}\int_1^{t}e^{-\theta
u}u^{2\gamma-2}\left(e^{2\theta u}-1\right)du &=&\frac{e^{-\theta
t}}{2\theta}\int_1^{t}e^{\theta u}u^{2\gamma-2} du-\frac{e^{-\theta
t}}{2\theta}\int_1^{t}e^{-\theta u}u^{2\gamma-2} du\nonumber
\\&\sim&\frac{t^{2\gamma-2}}{2\theta^2},\label{A1_ineq3}
\end{eqnarray}where we used the fact that
$e^{-\theta t}\int_1^{t}e^{-\theta u}u^{2\gamma-2} du\leq
Ce^{-\theta t}$, and by   L'H\^opital's Rule, we have
\begin{eqnarray*}\lim_{t\rightarrow\infty}\frac{\int_1^{t}e^{\theta u}u^{2\gamma-2} du}{2\theta t^{2\gamma-2} e^{\theta
t}}&=&\lim_{t\rightarrow\infty}\frac{e^{\theta
t}t^{2\gamma-2}}{2\theta t^{2\gamma-2} e^{\theta
t}\left(\theta+(2\gamma-2)t^{-1}\right)
}=\lim_{t\rightarrow\infty}\frac{1}{2\theta
\left(\theta+(2\gamma-2)t^{-1}\right) }=\frac{1}{2\theta^2}.
\end{eqnarray*}
 Thus, combining \eqref{A1_ineq1}, \eqref{A1_ineq2} and \eqref{A1_ineq3}, we deduce that, as $t\rightarrow\infty$,
\begin{eqnarray}
A_{1,t}&\sim&\frac{t^{2\gamma-2}}{2\theta^2}.\label{A1}
\end{eqnarray}
For $A_{2,t}$, we have, as $t\rightarrow\infty$,
\begin{eqnarray}
A_{2,t}&=&\frac{e^{-\theta t}}{2\theta}\int_t^{\infty}e^{-\theta
u}u^{2\gamma-2}\left(e^{2\theta t}-1\right)du\nonumber\\
&=&\frac{e^{\theta t}}{2\theta}\int_t^{\infty}e^{-\theta
u}u^{2\gamma-2}du-\frac{e^{-\theta
t}}{2\theta}\int_t^{\infty}e^{-\theta u}u^{2\gamma-2}du\nonumber
\\&\sim&\frac{t^{2\gamma-2}}{2\theta^2},\label{A2}
\end{eqnarray}where we used the fact that
$\frac{e^{-\theta t}}{2\theta}\int_t^{\infty}e^{-\theta
u}u^{2\gamma-2}du\leq Ct^{2\gamma-2}e^{-\theta t}$, and by using
L'H\^opital's Rule, we have
\begin{eqnarray*}\lim_{t\rightarrow\infty}\frac{\int_t^{\infty}e^{-\theta
u}u^{2\gamma-2}du}{2\theta t^{2\gamma-2} e^{-\theta
t}}&=&\lim_{t\rightarrow\infty}\frac{-e^{-\theta
t}t^{2\gamma-2}}{2\theta t^{2\gamma-2} e^{-\theta
t}\left(-\theta+(2\gamma-2)t^{-1}\right)
}\\
&=&\lim_{t\rightarrow\infty}\frac{-1}{2\theta
\left(-\theta+(2\gamma-2)t^{-1}\right) }=\frac{1}{2\theta^2}.
\end{eqnarray*}
Therefore, \eqref{A}, \eqref{A1} and \eqref{A2} prove \eqref{equi1}.\\
The estimate \eqref{ineq1} is a direct consequence of \eqref{equi1}
and Lemma \ref{lemma-tec1}.
\end{proof}

 \begin{lemma}Assume that $\gamma\in(0,1)$ and $\theta>0$. Then,   as $t\rightarrow\infty$,
\begin{eqnarray}
&&e^{-\theta t}\int_0^t\int_{-\infty}^0e^{\theta x}e^{\theta
y}\left(e^{\frac{y-x}{2\gamma}}\pm e^{-\frac{y-x}{2\gamma}}\right)^{2\gamma-2}dxdy\nonumber\\
&\sim& e^{-\min \left(\theta,\frac{1}{\gamma}-1\right) t} \times
\left\{
\begin{array}{ll} \int_0^{\infty}\left(e^{\theta u}-e^{-\theta
u}\right)\left(e^{\frac{u}{2\gamma}}\pm e^{-\frac{u}{2\gamma}}\right)^{2\gamma-2}du & \text { if }\ \theta<\frac{1}{\gamma}-1, \\
\frac{t}{2\theta} & \text { if }\ \theta=\frac{1}{\gamma}-1, \\
\frac{1}{\theta^2-\left(\frac{1}{\gamma}-1\right)^2} & \text { if }\
\theta>\frac{1}{\gamma}-1, \end{array} \right.\label{equi2}
\end{eqnarray}
and
\begin{eqnarray}
&&e^{-\theta t}\int_0^t\int_{-\infty}^0e^{\theta x}e^{\theta
y}\left[\left(e^{\frac{y-x}{2\gamma}}+
e^{-\frac{y-x}{2\gamma}}\right)^{2\gamma-2}
-\left(e^{\frac{y-x}{2\gamma}}-
e^{-\frac{y-x}{2\gamma}}\right)^{2\gamma-2}\right]dxdy \sim e^{-\min
\left(\theta,\frac{2}{\gamma}-1\right) t}\nonumber
\\&& \times \left\{
\begin{array}{ll} \int_0^{\infty}\left(e^{\theta u}-e^{-\theta
u}\right)\left[\left(e^{\frac{u}{2\gamma}}+
e^{-\frac{u}{2\gamma}}\right)^{2\gamma-2}
-\left(e^{\frac{u}{2\gamma}}- e^{-\frac{u}{2\gamma}}\right)^{2\gamma-2}\right]du & \text { if } \theta<\frac{2}{\gamma}-1, \\
\frac{(2\gamma-2)te^{\frac{t}{\gamma}}}{\theta} & \text { if } \theta=\frac{2}{\gamma}-1, \\
\frac{4\gamma-4}{\theta^2-\left(\frac{2}{\gamma}-1\right)^2} & \text
{ if } \theta>\frac{2}{\gamma}-1. \end{array} \right.\label{equi2ii}
\end{eqnarray}
Consequently,  there exists a constant $C>0$ that depends only on
$\theta$  and $\gamma$  such that for every   $t-s>2$,
\begin{eqnarray}
e^{-\theta t}e^{-\theta s}\int_s^t\int_0^se^{\theta x}e^{\theta
y}\left(e^{\frac{y-x}{2\gamma}}\pm
e^{-\frac{y-x}{2\gamma}}\right)^{2\gamma-2}dxdy&\leq&C\left\{
\begin{array}{ll} e^{-\min \left( \theta,\frac{1}{\gamma}-1\right)
|t-s|}  & \text { if }\ \theta\neq \frac{1}{\gamma}-1, \\
te^{-\min \left( \theta,\frac{1}{\gamma}-1\right) |t-s|} & \text {
if }\ \theta=\frac{1}{\gamma}-1,
\end{array} \right.\label{ineq2}
\end{eqnarray}
and
\begin{eqnarray}
&&e^{-\theta t}e^{-\theta s}\int_s^t\int_0^se^{\theta x}e^{\theta
y}\left[\left(e^{\frac{y-x}{2\gamma}}+
e^{-\frac{y-x}{2\gamma}}\right)^{2\gamma-2}
-\left(e^{\frac{y-x}{2\gamma}}-
e^{-\frac{y-x}{2\gamma}}\right)^{2\gamma-2}\right]dxdy\nonumber\\&&\quad
\leq C\left\{
\begin{array}{ll}  e^{-\min \left( \theta,\frac{2}{\gamma}-1\right)
|t-s|}  & \text { if }\ \theta\neq \frac{2}{\gamma}-1, \\
te^{-\min \left( \theta,\frac{1}{\gamma}-1\right) |t-s|} & \text {
if }\ \theta=\frac{2}{\gamma}-1.
\end{array} \right.\label{ineq2ii}
\end{eqnarray}
  \end{lemma}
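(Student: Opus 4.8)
The plan is to follow verbatim the architecture of the preceding lemma, i.e. the one yielding \eqref{equi1}--\eqref{ineq1}, only replacing the polynomial kernel $(y-x)^{2\gamma-2}$ by the hyperbolic kernels $K_\pm(u):=\left(e^{u/(2\gamma)}\pm e^{-u/(2\gamma)}\right)^{2\gamma-2}$ and by their signed combination $K_+-K_-$. First I would make the change of variables $u=y-x$ and apply Fubini, exactly as in the passage leading to \eqref{A}; this turns the left-hand side of \eqref{equi2} into
\[
\frac{e^{-\theta t}}{2\theta}\int_0^\infty e^{-\theta u}K_\pm(u)\left(e^{2\theta(u\wedge t)}-1\right)du,
\]
which splits into $A_{1,t}$ (the part over $(0,t)$ carrying $e^{2\theta u}-1$) and $A_{2,t}$ (the part over $(t,\infty)$ carrying the frozen factor $e^{2\theta t}-1$), in perfect analogy with the decomposition \eqref{A}.

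The crux is the decay rate of $K_\pm$ at infinity. Factoring out the dominant exponential gives $K_\pm(u)=e^{(1-1/\gamma)u}\left(1\pm e^{-u/\gamma}\right)^{2\gamma-2}\sim e^{-(1/\gamma-1)u}$, so the integrand of $A_{1,t}$ grows or decays like $e^{(\theta+1-1/\gamma)u}$. The sign of $\theta+1-1/\gamma$ is exactly what produces the threshold $\theta=1/\gamma-1$: when it is negative the $u$-integral converges and $A_{1,t}\sim\frac{e^{-\theta t}}{2\theta}\int_0^\infty(e^{\theta u}-e^{-\theta u})K_\pm(u)\,du$ while $A_{2,t}$ is of smaller order; when it vanishes the integral grows linearly, giving the factor $t$; and when it is positive the integral is dominated by its endpoint, so that both $A_{1,t}$ and $A_{2,t}$ contribute. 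I would establish each regime by the same L'H\^opital computations used for \eqref{A1_ineq3} and \eqref{A2}. In the supercritical regime the two coefficients combine through $\frac{1}{2\theta}\left(\frac{1}{\theta-a}+\frac{1}{\theta+a}\right)=\frac{1}{\theta^2-a^2}$ with $a=1/\gamma-1$, which is precisely the constant $\frac{1}{\theta^2-(1/\gamma-1)^2}$ in \eqref{equi2}. Integrability at $u=0$ is checked separately: for $K_-$ one has $K_-(u)\sim(u/\gamma)^{2\gamma-2}$, and since $e^{\theta u}-e^{-\theta u}\sim 2\theta u$ the integrand is $O(u^{2\gamma-1})$, which is integrable because $\gamma>0$.

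For \eqref{equi2ii} the same machinery applies, but now the kernel $K_+(u)-K_-(u)$ is a difference of two quantities sharing the same leading exponential $e^{(1-1/\gamma)u}$, so a first-order expansion is needed. Writing $K_\pm(u)=e^{(1-1/\gamma)u}\left(1\pm e^{-u/\gamma}\right)^{2\gamma-2}$ and using $\left(1+\delta\right)^{2\gamma-2}-\left(1-\delta\right)^{2\gamma-2}=(4\gamma-4)\delta+O(\delta^3)$ with $\delta=e^{-u/\gamma}$ yields $K_+(u)-K_-(u)\sim(4\gamma-4)e^{(1-2/\gamma)u}$. This faster decay $e^{-(2/\gamma-1)u}$ moves the threshold to $\theta=2/\gamma-1$ and accounts for the factor $4\gamma-4$ appearing in the supercritical constant, after which the three cases are handled exactly as above. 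Extracting this leading order out of a near-cancellation of two terms each of size $e^{(1-1/\gamma)u}$ is the main obstacle, since one must retain the first-order term of the expansion rather than the cancelling zeroth-order one.

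Finally, \eqref{ineq2} and \eqref{ineq2ii} follow from the asymptotics together with Lemma \ref{lemma-tec1}, just as \eqref{ineq1} was deduced from \eqref{equi1}. For \eqref{ineq2} I apply Lemma \ref{lemma-tec1} with the positive function $\rho(v)=K_\pm(v)$ and bound the resulting $e^{-\theta(t-s)}\int_0^{t-s}\int_{-\infty}^0$ by the right-hand side of \eqref{equi2} taken at $t-s$; since $t-s>2$ the equivalence $\sim$ becomes a uniform constant bound, the extra factor $t$ surviving only in the critical case. For \eqref{ineq2ii} one first notes that $K_+(v)-K_-(v)<0$ for $v>0$, because $2\gamma-2<0$ reverses the inequality $e^{v/(2\gamma)}+e^{-v/(2\gamma)}>e^{v/(2\gamma)}-e^{-v/(2\gamma)}>0$; hence I would apply Lemma \ref{lemma-tec1} to the positive kernel $\rho=K_--K_+$ and invoke \eqref{equi2ii} for the negated kernel, which bounds the magnitude of the left-hand side and delivers the claimed estimate.
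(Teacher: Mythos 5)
Your proposal is correct and follows essentially the same route as the paper: the change of variables $u=y-x$, the split into the $B_{1,t}$/$B_{2,t}$ pieces, the case analysis around the threshold $\theta=\frac{1}{\gamma}-1$ via L'H\^opital, the first-order expansion giving $K_+(u)-K_-(u)\sim(4\gamma-4)e^{(1-\frac{2}{\gamma})u}$ for \eqref{equi2ii}, and the reduction of \eqref{ineq2}--\eqref{ineq2ii} to Lemma \ref{lemma-tec1}. Your observation that $K_+-K_-$ is negative, so that Lemma \ref{lemma-tec1} must be applied to the positive kernel $K_--K_+$, is a detail the paper leaves implicit and is a worthwhile clarification.
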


\begin{proof}
 Let us  prove \eqref{equi2}. Let $t>2$.
  Making the change of variables $u=y-x$, we get
\begin{eqnarray}
&&e^{-\theta t}\int_0^t\int_{-\infty}^0e^{\theta x}e^{\theta
y}\left(e^{\frac{y-x}{2\gamma}}\pm
e^{-\frac{y-x}{2\gamma}}\right)^{2\gamma-2}dxdy\nonumber\\&=&e^{-\theta
t}\int_0^t\int_y^{\infty}e^{-\theta u}e^{2\theta
y}\left(e^{\frac{u}{2\gamma}}\pm e^{-\frac{u}{2\gamma}}\right)^{2\gamma-2}dudy\nonumber\\
&=&\frac{e^{-\theta t}}{2\theta}\int_0^{t}e^{-\theta
u}\left(e^{\frac{u}{2\gamma}}\pm
e^{-\frac{u}{2\gamma}}\right)^{2\gamma-2}\left(e^{2\theta
u}-1\right)du +\frac{e^{-\theta
t}}{2\theta}\int_t^{\infty}e^{-\theta
u}\left(e^{\frac{u}{2\gamma}}\pm
e^{-\frac{u}{2\gamma}}\right)^{2\gamma-2}\left(e^{2\theta
t}-1\right)du\nonumber
\\
&=:&B_{1,t}+B_{2,t}.\label{B}
\end{eqnarray}
Further,
\begin{eqnarray*}
B_{1,t}&=&\frac{e^{-\theta t}}{2\theta}\int_0^{t}e^{-\theta
u}\left(e^{\frac{u}{2\gamma}}\pm e^{-\frac{u}{2\gamma}}\right)^{2\gamma-2}\left(e^{2\theta u}-1\right)du\\
&=&\frac{e^{-\theta t}}{2\theta}\int_0^{1}e^{-\theta
u}\left(e^{\frac{u}{2\gamma}}\pm
e^{-\frac{u}{2\gamma}}\right)^{2\gamma-2}\left(e^{2\theta
u}-1\right)du +\frac{e^{-\theta t}}{2\theta}\int_1^{t}e^{-\theta
u}\left(e^{\frac{u}{2\gamma}}\pm
e^{-\frac{u}{2\gamma}}\right)^{2\gamma-2}\left(e^{2\theta
u}-1\right)du.
\end{eqnarray*}
Since $\frac{e^{2\theta u}-1}{u}\longrightarrow2\theta$,
$\frac{e^{\frac{u}{2\gamma}}-e^{-\frac{u}{2\gamma}}}{u}\rightarrow\frac{1}{\gamma}$
and $e^{\frac{u}{2\gamma}}+e^{-\frac{u}{2\gamma}}\rightarrow2$ as
$u\rightarrow0$, then
 \[\int_0^{1}e^{-\theta
u}\left(e^{\frac{u}{2\gamma}}\pm
e^{-\frac{u}{2\gamma}}\right)^{2\gamma-2}\left(e^{2\theta
u}-1\right)du\leq
C\int_0^{1}\left(u+u^{2\gamma-1}\right)du<\infty.\]
 On the other hand,
 \begin{eqnarray}e^{-\theta
t}\left(e^{\frac{t}{2\gamma}}\pm
e^{-\frac{t}{2\gamma}}\right)^{2\gamma-2}\left(e^{2\theta
t}-1\right)\sim e^{-\left(\frac{1}{\gamma}-1-\theta\right) t}\quad
\mbox{ as } t\rightarrow\infty.\label{equi-integrand}\end{eqnarray}
 Thus, for $\theta<\frac{1}{\gamma}-1$, we obtain, using \eqref{equi-integrand}, $\displaystyle\int_1^{\infty}e^{-\theta
u}\left(e^{\frac{u}{2\gamma}}\pm
e^{-\frac{u}{2\gamma}}\right)^{2\gamma-2}\left(e^{2\theta
u}-1\right)du<\infty.$ In this case, we have as
$t\rightarrow\infty$,
\begin{eqnarray*}
\frac{e^{-\theta t}}{2\theta}\int_1^{t}e^{-\theta
u}\left(e^{\frac{u}{2\gamma}}\pm
e^{-\frac{u}{2\gamma}}\right)^{2\gamma-2}\left(e^{2\theta
u}-1\right)du &\sim&\frac{e^{-\theta
t}}{2\theta}\int_1^{\infty}e^{-\theta
u}\left(e^{\frac{u}{2\gamma}}\pm
e^{-\frac{u}{2\gamma}}\right)^{2\gamma-2}\left(e^{2\theta
u}-1\right)du.
\end{eqnarray*}
 For $\theta>\frac{1}{\gamma}-1$,
  since $0<\left(1\pm e^{-\frac{u}{\gamma}}\right)^{2\gamma-2}\rightarrow1$
  as $u\rightarrow\infty$,  there exists $C>0$ such that \[\int_1^{t}e^{-\theta
u}\left(e^{\frac{u}{2\gamma}}\pm
e^{-\frac{u}{2\gamma}}\right)^{2\gamma-2}\left(e^{2\theta
u}-1\right)du\geq C\int_1^{t}e^{\left(\theta-\frac{1}{\gamma}
+1\right)u}\left(1-e^{-2\theta }\right)du\rightarrow\infty\] as
$t\rightarrow\infty$. Combining this together with
\eqref{equi-integrand} and L'H\^opital's Rule, we
 get
\begin{eqnarray*}&&\lim_{t\rightarrow\infty}\frac{\int_1^{t}e^{-\theta
u}\left(e^{\frac{u}{2\gamma}}\pm
e^{-\frac{u}{2\gamma}}\right)^{2\gamma-2}\left(e^{2\theta
u}-1\right)du}{2\theta e^{\theta
t}e^{-\left(\frac{1}{\gamma}-1\right) t}}
\\&=&\lim_{t\rightarrow\infty}\frac{\left(e^{\frac{t}{2\gamma}}\pm e^{-\frac{t}{2\gamma}}\right)^{2\gamma-2}\left(1-e^{-2\theta
t}\right)}{2\theta e^{-\left(\frac{1}{\gamma}-1\right)
t}\left(\theta-\frac{1}{\gamma}+1\right)}
\\&=&\frac{1}{2\theta \left(\theta-\frac{1}{\gamma}+1\right)}.
\end{eqnarray*}
In this situation, we have as $t\rightarrow\infty$,
\begin{eqnarray*}
\frac{e^{-\theta t}}{2\theta}\int_1^{t}e^{-\theta
u}\left(e^{\frac{u}{2\gamma}}\pm
e^{-\frac{u}{2\gamma}}\right)^{2\gamma-2}\left(e^{2\theta
u}-1\right)du &\sim&\frac{e^{-\left(\frac{1}{\gamma}-1\right)
t}}{2\theta \left(\theta-\frac{1}{\gamma}+1\right)}.
\end{eqnarray*}
For $\theta=\frac{1}{\gamma}-1$, since $0<\left(1\pm
e^{-\frac{u}{\gamma}}\right)^{2\gamma-2}\rightarrow1$
  as $u\rightarrow\infty$,  there exists $C>0$ such that, as $t\rightarrow\infty$,
\[\int_1^{t}e^{-\theta
u}\left(e^{\frac{u}{2\gamma}}\pm
e^{-\frac{u}{2\gamma}}\right)^{2\gamma-2}\left(e^{2\theta
u}-1\right)du\geq C
\int_1^{t}\left(1-e^{-2\theta}\right)du\rightarrow\infty.\]
Combining this together with \eqref{equi-integrand} and
L'H\^opital's Rule leads to
\begin{eqnarray*}\lim_{t\rightarrow\infty}
\frac{\int_1^{t}e^{-\theta u}\left(e^{\frac{u}{2\gamma}}\pm
e^{-\frac{u}{2\gamma}}\right)^{2\gamma-2}\left(e^{2\theta
u}-1\right)du}{t} &=&\lim_{t\rightarrow\infty} \left(1\pm
e^{-\frac{t}{2\gamma}}\right)^{2\gamma-2}\left(1-e^{-2\theta
t}\right)\\&=&1.
\end{eqnarray*}
In this case, we get as $t\rightarrow\infty$,
\begin{eqnarray*}
\frac{e^{-\theta t}}{2\theta}\int_1^{t}e^{-\theta
u}\left(e^{\frac{u}{2\gamma}}\pm
e^{-\frac{u}{2\gamma}}\right)^{2\gamma-2}\left(e^{2\theta
u}-1\right)du &\sim&\frac{te^{-\theta t}}{2\theta}.
\end{eqnarray*}
Thus,  as $t\rightarrow\infty$,
\begin{eqnarray}
B_{1,t}&\sim& e^{-\min \left(\theta,\frac{1}{\gamma}-1\right) t}
\times \left\{
\begin{array}{ll} \int_0^{\infty}\left(e^{\theta u}-e^{-\theta
u}\right)\left(e^{\frac{u}{2\gamma}}\pm e^{-\frac{u}{2\gamma}}\right)^{2\gamma-2}du & \text { if }\ \theta<\frac{1}{\gamma}-1, \\
\frac{t}{2\theta} & \text { if }\ \theta=\frac{1}{\gamma}-1, \\
\frac{1}{2\theta\left(\theta-\frac{1}{\gamma}+1\right)} & \text { if
}\ \theta>\frac{1}{\gamma}-1. \end{array} \right.\label{B1}
\end{eqnarray}
For $B_{2,t}$, we have
\begin{eqnarray*} B_{2,t}&=&\frac{e^{-\theta t}}{2\theta}\int_t^{\infty}e^{-\theta
u}\left(e^{\frac{u}{2\gamma}}\pm
e^{-\frac{u}{2\gamma}}\right)^{2\gamma-2}\left(e^{2\theta
t}-1\right)du\\&=&\frac{e^{\theta t}\left(1-e^{-2\theta
t}\right)}{2\theta}\int_t^{\infty}e^{-\theta
u}e^{-\left(\frac{1}{\gamma}-1\right) u}\left(1\pm
e^{-\frac{u}{\gamma}}\right)^{2\gamma-2}du,
\end{eqnarray*}
where, as $t\rightarrow\infty$,
\begin{eqnarray*}e^{\theta
t}\int_t^{\infty}e^{-\theta u}e^{-\left(\frac{1}{\gamma}-1\right)
u}\left(1\pm e^{-\frac{u}{\gamma}}\right)^{2\gamma-2}du \sim
\frac{e^{-\left(\frac{1}{\gamma}-1\right)
t}}{\theta+\frac{1}{\gamma}-1}
\end{eqnarray*}
since, by  L'H\^opital's Rule, we have
\begin{eqnarray*}\lim_{t\rightarrow\infty}\frac{\int_t^{\infty}e^{-\theta
u}e^{-\left(\frac{1}{\gamma}-1\right) u}\left(1\pm
e^{-\frac{u}{\gamma}}\right)^{2\gamma-2}du}{e^{-\theta
t}e^{-\left(\frac{1}{\gamma}-1\right) t}}
&=&\lim_{t\rightarrow\infty}\frac{1}{\theta+\frac{1}{\gamma}-1}\left(1\pm e^{-\frac{t}{\gamma}}\right)^{2\gamma-2}\\
&=&\frac{1}{\theta+\frac{1}{\gamma}-1}.
\end{eqnarray*}
Hence, as $t\rightarrow\infty$, \begin{eqnarray} B_{2,t} \sim
\frac{e^{-\left(\frac{1}{\gamma}-1\right)
t}}{2\theta\left(\theta+\frac{1}{\gamma}-1\right)}.\label{B2}
\end{eqnarray}
Then, using \eqref{B}, \eqref{B1} and \eqref{B2}, we obtain  \eqref{equi2}.\\
By similar arguments as above and the fact
\[\left(e^{\theta u}-e^{-\theta
u}\right)\left[\left(e^{\frac{u}{2\gamma}}+
e^{-\frac{u}{2\gamma}}\right)^{2\gamma-2}
-\left(e^{\frac{u}{2\gamma}}-
e^{-\frac{u}{2\gamma}}\right)^{2\gamma-2}\right]\sim (4\gamma-4)
e^{-\left(\frac{2}{\gamma}-1-\theta\right) t} \quad \mbox{ as }
t\rightarrow\infty,\] the claim
\eqref{equi2ii} follows.\\
 The estimate \eqref{ineq2} and \eqref{ineq2ii} are direct
consequences of \eqref{equi2} and \eqref{equi2ii}, respectively,
combined with Lemma \ref{lemma-tec1}.
  \end{proof}

Consider the  Ornstein-Uhlenbeck process $X:=\{X_{t}, t\geq 0\}$
defined by the following linear stochastic differential equation
\begin{equation}
X_{0}=0,\qquad dX_{t}=-\theta X_{t}dt+dG_{t}, \quad t\geq0,
\label{GOU}
\end{equation}
where the process $G$ satisfies $(\mathcal{A})$ and $\theta\in\R$.\\
According to Lemma \ref{lemma-Riemann integral}, the solution of the
equation \eqref{GOU} can be expressed explicitly as
\begin{equation}
 X_t = e^{-\theta t} \int_0^te^{\theta r}dG_r=G_t-\theta e^{-\theta t}\int_0^te^{\theta r}G_rdr. \label{decomp-GOU}
\end{equation}
This implies that
\begin{equation}
 E\left(X_t^2\right) = R_G(t,t)-2\theta e^{-\theta t}\int_0^te^{\theta r}R_G(r,t)dr
 +\theta^2 e^{-2\theta t}\int_0^t\int_0^te^{\theta r}e^{\theta s}R_G(r,s)drds. \label{decomp-var-GOU}
\end{equation}
If $\theta>0$, then,  according to \eqref{IBP-G-infty},
\begin{equation}
Z_{t}=\int_{-\infty }^{t}e^{-\theta (t-s)}dG_{s}\label{Z}
\end{equation}
is well defined as a Riemann-Stieltjes integral, so we can write
\begin{equation}
 X_t = Z_{t}-e^{-\theta t} Z_{0},\quad t\geq0. \label{link-X-Z}
\end{equation}

We will also make use of the following lemmas.

\begin{lemma}[\cite{EEO}]\label{lemma key1 of applications}Let $g : [0,\infty)\times[0,\infty)\longrightarrow\mathbb{R}$
 be a symmetric function such that $\frac{\partial  g }{\partial s }(s,r)$
 and $\frac{\partial^2 g }{\partial s\partial r}(s,r)$ are integrable on   $(0,t)\times[0,t)$ for all $t>0$. Then, for every $t>0$,
\begin{eqnarray}  \Delta_g(t)&:=&g(t,t)-2\theta e^{-\theta t}\int_0^t g(s,t)e^{\theta s}ds
+\theta^2 e^{-2\theta t}\int_0^t\int_0^t g(s,r)e^{\theta (s+r)}dr ds
\nonumber\\&=&2e^{-2\theta t}\int_0^te^{\theta s} \frac{\partial g
}{\partial s}(s,0)ds+2 e^{-2\theta t}\int_0^tdse^{\theta s}\int_0^s
dr\frac{\partial^2 g }{\partial s\partial r}(s,r)e^{\theta
r}.\label{key1}
\end{eqnarray}
\end{lemma}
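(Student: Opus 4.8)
The plan is to prove the identity by the fundamental theorem of calculus after clearing the exponential prefactors; this is a purely deterministic identity, so calculus (Leibniz's rule and one integration by parts) suffices and the stochastic lemmas above are not needed. Write $\partial_1 g$ and $\partial_2 g$ for the partial derivatives of $g$ with respect to its first and second arguments, so that $\partial_1 g(s,0)=\frac{\partial g}{\partial s}(s,0)$ and $\partial_1\partial_2 g=\frac{\partial^2 g}{\partial s\partial r}$, and set
\[
L(t):=e^{2\theta t}\Delta_g(t),\qquad R(t):=2\int_0^te^{\theta s}\partial_1 g(s,0)\,ds+2\int_0^te^{\theta s}\int_0^s\partial_1\partial_2 g(s,r)e^{\theta r}\,dr\,ds,
\]
so that the claimed identity \eqref{key1} is exactly $L(t)=R(t)$. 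It suffices to verify $L(0)=R(0)$ and $L'(t)=R'(t)$ for every $t>0$. The integrability hypotheses on $\partial_1 g$ and $\partial_1\partial_2 g$ over $(0,t)\times[0,t)$ are precisely what legitimise the differentiations under the integral sign used below and guarantee finiteness of every integral appearing.

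For the derivative of $L$ I would write $I(t):=\int_0^tg(s,t)e^{\theta s}\,ds$ and $J(t):=\int_0^t\int_0^tg(s,r)e^{\theta(s+r)}\,dr\,ds$, so that $L(t)=e^{2\theta t}g(t,t)-2\theta e^{\theta t}I(t)+\theta^2J(t)$. Differentiating the square-domain integral gives $J'(t)=2e^{\theta t}I(t)$, the two boundary contributions along the top and right edges of $[0,t]^2$ coinciding by the symmetry of $g$; Leibniz's rule gives $I'(t)=g(t,t)e^{\theta t}+\int_0^t\partial_2 g(s,t)e^{\theta s}\,ds$, and $\frac{d}{dt}g(t,t)=2\partial_1 g(t,t)$ (again by symmetry). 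Substituting these, the contributions proportional to $g(t,t)$ and to $I(t)$ cancel, leaving
\[
L'(t)=2e^{2\theta t}\partial_1 g(t,t)-2\theta e^{\theta t}\int_0^t\partial_2 g(s,t)e^{\theta s}\,ds,
\]
whereas differentiating $R$ directly yields $R'(t)=2e^{\theta t}\partial_1 g(t,0)+2e^{\theta t}\int_0^t\partial_1\partial_2 g(t,r)e^{\theta r}\,dr$.

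The decisive step is to reconcile these two expressions, and I would do so by integrating by parts in $s$ inside $L'(t)$:
\[
\theta\int_0^t\partial_2 g(s,t)e^{\theta s}\,ds=\partial_2 g(t,t)e^{\theta t}-\partial_2 g(0,t)-\int_0^t\partial_1\partial_2 g(s,t)e^{\theta s}\,ds.
\]
Feeding this into $L'(t)$, the boundary term $\partial_2 g(t,t)e^{\theta t}=\partial_1 g(t,t)e^{\theta t}$ (symmetry) cancels the first summand of $L'(t)$, so that $L'(t)=2e^{\theta t}\big(\partial_2 g(0,t)+\int_0^t\partial_1\partial_2 g(s,t)e^{\theta s}\,ds\big)$. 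It then remains to invoke the symmetry relations $\partial_2 g(0,t)=\partial_1 g(t,0)$ and $\partial_1\partial_2 g(s,t)=\partial_1\partial_2 g(t,s)$, both consequences of $g(x,y)=g(y,x)$ (the latter also using that the mixed partials commute), after which a renaming of the integration variable shows $L'(t)=R'(t)$.

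Finally I would treat the base point and then conclude. Evaluating at $t=0$, both integrals defining $R$ are over empty intervals and vanish, while $L(0)=g(0,0)$; in the situation where the lemma is applied one has $g=R_G$ with $R_G(0,0)=E(G_0^2)=0$, so $L(0)=0=R(0)$ and the fundamental theorem of calculus gives $L\equiv R$, which is \eqref{key1}. I expect the main difficulties to be organisational rather than conceptual: the careful application of Leibniz's rule when $t$ appears simultaneously in the limits of integration and inside the integrand (with the interchange justified by the integrability assumptions), and the precise bookkeeping of which argument each derivative acts on, since it is exactly the symmetry of $g$ and of its mixed partial that forces the two integral terms to match.
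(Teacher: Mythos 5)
The paper does not prove this lemma at all: it is imported verbatim from \cite{EEO}, so there is no internal proof to compare against. Your argument is a legitimate self-contained verification, and the computations check out: with $I(t)=\int_0^t g(s,t)e^{\theta s}\,ds$ and $J(t)=\int_0^t\int_0^t g(s,r)e^{\theta(s+r)}\,dr\,ds$ one indeed gets $J'(t)=2e^{\theta t}I(t)$ by symmetry, the $g(t,t)$ and $I(t)$ terms cancel in $L'(t)$, and the single integration by parts in $s$ plus the symmetry relations $\partial_2 g(t,t)=\partial_1 g(t,t)$, $\partial_2 g(0,t)=\partial_1 g(t,0)$ and $\partial_1\partial_2 g(s,t)=\partial_1\partial_2 g(t,s)$ yield $L'(t)=R'(t)$. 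Two remarks. First, your base-point check exposes a genuine (if harmless) imprecision in the statement as quoted: for $g\equiv c$ one computes $\Delta_g(t)=c\,e^{-2\theta t}$ while the right-hand side of \eqref{key1} vanishes, so the identity as written requires $g(0,0)=0$ (equivalently, the correct general identity carries an extra term $e^{-2\theta t}g(0,0)$). You correctly localize this to $L(0)=g(0,0)$ and observe that in every application in the paper $g$ is either $R_G$ with $G_0=0$ a.s.\ or one of $g_{S^H}$, $g_{B^{H,K}}$, all of which vanish at the origin, so nothing downstream is affected; it would be cleaner to state the hypothesis $g(0,0)=0$ explicitly rather than defer it to the applications. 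Second, your argument tacitly uses slightly more regularity than the stated integrability of $\partial g/\partial s$ and $\partial^2 g/\partial s\partial r$ (continuity up to the diagonal for the boundary terms in the integration by parts, and equality of the mixed partials); this matches what is implicitly assumed wherever the lemma is invoked, but is worth flagging as the price of the differentiate-both-sides route compared with a direct Fubini-type rewriting of $g(s,r)$ as an iterated integral of its mixed partial.
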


The following lemma is an immediate consequence of
\eqref{decomp-GOU} and \eqref{cov-wienner-integ}.
\begin{lemma}[\cite{DEV}]
\label{calculcov} Let $G$ be a measurable process satisfying  $(\mathcal{A})$ and $X$ is the solution of the equation (\ref%
{GOU}). Assume that $\frac{\partial^2R_G}{\partial y\partial x}$ is
continuous on $\R^2 \setminus\{(x,y)|x=y\}$.
 Then,
for every $0 < s < t$, we have
\begin{eqnarray}
E\left(X_{s}X_{t}\right) = e^{- \theta(t-s)} E\left(X_{s}^{2}\right)
+ e^{- \theta t} e^{-
\theta s} \int_{s}^{t} e^{\theta v} \int_{0}^{s} e^{\theta u} \frac{%
\partial^2R_{G}}{\partial u\partial v} (u,v) du dv,  \label{cov of X GOU}
\end{eqnarray}
provided that the integral on the right-hand side converges.
\end{lemma}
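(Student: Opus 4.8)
The plan is to start from the explicit representation \eqref{decomp-GOU}, namely $X_t = e^{-\theta t}\int_0^t e^{\theta r}\,dG_r$, and to exploit additivity of the pathwise Riemann--Stieltjes integral in order to isolate the increment of the noise on $[s,t]$. For $0<s<t$ I would split $\int_0^t e^{\theta r}\,dG_r = \int_0^s e^{\theta u}\,dG_u + \int_s^t e^{\theta v}\,dG_v$ and substitute into $X_sX_t = e^{-\theta s}e^{-\theta t}\,(\int_0^s e^{\theta u}\,dG_u)(\int_0^t e^{\theta v}\,dG_v)$. Expanding then produces two contributions: the square $e^{-\theta s}e^{-\theta t}\,(\int_0^s e^{\theta u}\,dG_u)^2$ and the cross term $e^{-\theta s}e^{-\theta t}\,(\int_0^s e^{\theta u}\,dG_u)(\int_s^t e^{\theta v}\,dG_v)$.

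For the square I would invoke \eqref{decomp-GOU} once more to write $\int_0^s e^{\theta u}\,dG_u = e^{\theta s}X_s$; taking expectations turns it into $e^{2\theta s}E(X_s^2)$, and the prefactor $e^{-\theta s}e^{-\theta t}e^{2\theta s} = e^{-\theta(t-s)}$ yields exactly the leading term $e^{-\theta(t-s)}E(X_s^2)$ of \eqref{cov of X GOU}. For the cross term I would apply equation \eqref{cov-wienner-integ} of Lemma \ref{lemma-cov} to the two integrals, taking the first over $[0,s]$ and the second over $[s,t]$, so that in the notation of that lemma the lower interval is $[0,s]$, the upper interval is $[s,t]$, and the ordering hypothesis holds with the shared endpoint $s$. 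This rewrites the expectation of the product as $\int_s^t\int_0^s e^{\theta u}e^{\theta v}\frac{\partial^2 R_G}{\partial v\partial u}(u,v)\,du\,dv$; multiplying by $e^{-\theta s}e^{-\theta t}$ and using symmetry of the mixed second partial derivative gives precisely the integral term of \eqref{cov of X GOU}. Summing the two contributions then closes the argument.

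The argument is essentially bookkeeping once the key lemmas are in place, so I do not anticipate a genuine obstacle. The one point demanding care is verifying that Lemma \ref{lemma-cov} applies to the cross term: the two intervals must meet the ordering hypothesis of that lemma, which they do since they share only the endpoint $s$, and the double integral on the right-hand side must converge---this is exactly the standing proviso carried in the statement of the present lemma, so it is inherited rather than established here. A secondary technical nicety is reconciling the order $\frac{\partial^2 R_G}{\partial v\partial u}$ emerging from \eqref{cov-wienner-integ} with the $\frac{\partial^2 R_G}{\partial u\partial v}$ displayed in \eqref{cov of X GOU}, which is handled by the continuity assumption on the mixed derivative off the diagonal and the consequent equality of the mixed partials.
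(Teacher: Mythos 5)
Your argument is exactly the one the paper intends: it states Lemma \ref{calculcov} as "an immediate consequence of \eqref{decomp-GOU} and \eqref{cov-wienner-integ}," and your splitting of $\int_0^t e^{\theta r}\,dG_r$ at $s$, the identification $\int_0^s e^{\theta u}\,dG_u = e^{\theta s}X_s$ for the square term, and the application of Lemma \ref{lemma-cov} with shared endpoint $s$ for the cross term fill in precisely those details. The proposal is correct and follows the paper's approach.
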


\section{Langevin equations driven by noises  with
stationary increments}

 Recall that a
process $G=\{G_{t}, t \in \mathbb{R}\}$ has stationary increments
if, for all $s \in \mathbb{R},\{G_{t}-G_{0}, t \in \mathbb{R}\}$ has
the same finite distributions as $\{G_{t+s}-G_{s}, t \in
\mathbb{R}\}$.

The existence and uniqueness of stationary solutions to Langevin
equations driven by noise processes with stationary increments are
discussed in the following theorem.

\begin{theorem}\label{thm-stat-incr} Let $\{G_{t}, t \in \mathbb{R}\}$ be a measurable process with stationary increments satisfying  $(\mathcal{A})$.
Assume $\theta>0$. Then,
\begin{enumerate}
\item[(a)] The solution  $Z_{t}=\int_{-\infty }^{t}e^{-\theta (t-s)}dG_{s},\ t\geq0$, of the equation
\begin{eqnarray}d Z_{t}=-\theta Z_{t} dt+d G_{t},\ Z_0=\int_{-\infty }^{0}e^{\theta
s}dG_{s},\quad t\geq0,\label{Z-OU}\end{eqnarray} is a  stationary
process.
\item[(b)] In addition, if we assume that  the
function $\rho_G(t):=E\left(G_t^2\right),\, t\in\R$, is twice
continuously differentiable on $\R\setminus\{0\}$, then
\begin{eqnarray}E\left(Z_tZ_0\right)=e^{-\theta t}E\left(Z_0^2\right)+\frac{e^{-\theta t}}{2}\int_{0}^{t}\int_{-\infty}^{0}
e^{\theta u} e^{\theta v}\rho_G''(v-u)dudv,\quad
t\geq0,\label{cov-Z-stat-incr}
\end{eqnarray}
and the process $X$, given by \eqref{decomp-GOU}, satisfies
\begin{eqnarray}
E\left(X_{s}X_{t}\right) = e^{- \theta(t-s)} E\left(X_{s}^{2}\right)
+ e^{- \theta t} e^{- \theta s} \int_{s}^{t} e^{\theta v}
\int_{0}^{s} e^{\theta u} \rho_G''(v-u) du dv, \quad s,t\geq0,
\label{cov of X-stat-incr}
\end{eqnarray}
\begin{eqnarray}\left|E\left(X_t^2\right)-E\left(Z_0^2\right)\right|\leq
Ce^{-\theta t},\quad t\geq0.\label{var of X-stat-incr}\end{eqnarray}
Also,
\begin{eqnarray}E\left(Z_0^2\right)=\frac{\theta}{2}\int_{0}^{\infty}  e^{-\theta
t}\rho_G(t)dt,\label{var-Z-stat-incr}
\end{eqnarray}
provided that the integrals above converge.
 \end{enumerate}
\end{theorem}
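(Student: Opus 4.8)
The plan is to treat parts (a) and (b) in turn, reducing everything to the stationary-increment form of the covariance of $G$ together with the lemmas of Section~2.

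For part (a), I would first confirm that the stated $Z$ really is the solution of \eqref{Z-OU}: applying Lemma~\ref{lemma-Riemann integral}, formula \eqref{solution-linear-GOU}, with initial value $\xi=Z_0=\int_{-\infty}^0 e^{\theta s}dG_s$ gives $Z_t=e^{-\theta t}\big(Z_0+\int_0^t e^{\theta s}dG_s\big)=\int_{-\infty}^t e^{-\theta(t-s)}dG_s$, so $Z$ solves \eqref{Z-OU}. For stationarity, fix $h\in\R$ and change variables $s\mapsto s+h$ to write $Z_{t+h}=\int_{-\infty}^t e^{-\theta(t-s)}dG_{s+h}$. Since $G_0=0$ and $G$ has stationary increments, the shifted process $\widetilde G_s:=G_{s+h}-G_h$ has the same law as $G$ and $dG_{s+h}=d\widetilde G_s$. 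As the pathwise Riemann--Stieltjes integrals defining $(Z_{t_1},\dots,Z_{t_n})$ form a fixed measurable functional of the driving path, equality in law of $\widetilde G$ and $G$ transfers to the finite-dimensional distributions, giving $(Z_{t_1+h},\dots,Z_{t_n+h})\stackrel{d}{=}(Z_{t_1},\dots,Z_{t_n})$.

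For part (b) the engine is the covariance identity. From $G_0=0$ and stationarity of increments one gets $E[(G_t-G_s)^2]=\rho_G(|t-s|)$, hence $R_G(u,v)=\tfrac12\big(\rho_G(|u|)+\rho_G(|v|)-\rho_G(|v-u|)\big)$ with $\rho_G$ even. The single-variable terms drop out of the mixed derivative, so for $u\neq v$ one finds $\frac{\partial^2 R_G}{\partial u\partial v}(u,v)=\tfrac12\rho_G''(v-u)$, which is continuous off the diagonal because $\rho_G\in C^2(\R\setminus\{0\})$; this legitimizes Lemmas~\ref{lemma-cov} and~\ref{calculcov}. Substituting this identity into Lemma~\ref{calculcov} produces \eqref{cov of X-stat-incr}. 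For \eqref{cov-Z-stat-incr} I would use the decomposition $Z_t=e^{-\theta t}\big(Z_0+\int_0^t e^{\theta s}dG_s\big)$, so that $E(Z_tZ_0)=e^{-\theta t}E(Z_0^2)+e^{-\theta t}E\big(Z_0\int_0^t e^{\theta s}dG_s\big)$, and evaluate the cross term with \eqref{cov-wienner-integ-infty} (applied with the first integral over $(-\infty,0]$ and the second over $[0,t]$), where again $\frac{\partial^2R_G}{\partial y\partial x}=\tfrac12\rho_G''(y-x)$ on the relevant region $x<0<y$.

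The remaining two claims are then short. For \eqref{var of X-stat-incr} I would use $X_t=Z_t-e^{-\theta t}Z_0$ from \eqref{link-X-Z} to expand $E(X_t^2)=E(Z_t^2)-2e^{-\theta t}E(Z_tZ_0)+e^{-2\theta t}E(Z_0^2)$; by part (a), $E(Z_t^2)=E(Z_0^2)$, and Cauchy--Schwarz gives $|E(Z_tZ_0)|\le E(Z_0^2)$, so $|E(X_t^2)-E(Z_0^2)|\le\big(2e^{-\theta t}+e^{-2\theta t}\big)E(Z_0^2)\le Ce^{-\theta t}$ for $t\ge0$. For \eqref{var-Z-stat-incr} I would integrate by parts via \eqref{IBP-G-infty} at $t=0$ to write $Z_0=-\theta\int_{-\infty}^0 e^{\theta s}G_s\,ds$, whence $E(Z_0^2)=\theta^2\int_{-\infty}^0\int_{-\infty}^0 e^{\theta(s+r)}R_G(s,r)\,ds\,dr$; inserting the stationary-increment form of $R_G$, passing to the positive half-line, and splitting into the two diagonal terms (each equal to $\tfrac{\theta}{2}\int_0^\infty e^{-\theta t}\rho_G(t)\,dt$ after factoring the product integral) and the cross term $-\tfrac{\theta^2}{2}\iint e^{-\theta(a+b)}\rho_G(|a-b|)\,da\,db$, the latter collapses under the substitution $c=a-b$ and Fubini to $-\tfrac{\theta}{2}\int_0^\infty e^{-\theta t}\rho_G(t)\,dt$, leaving exactly \eqref{var-Z-stat-incr}. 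The main obstacle throughout is the bookkeeping of convergence of the improper integrals at $-\infty$ and the justification of the interchange of the order of integration in this last computation; both are controlled by $(\mathcal{A})$, which forces $\rho_G(t)\le C t^{2\gamma}$ with $\gamma<1$ together with the a.s.\ growth bound $G_t=o(|t|^{\delta})$ used in Lemma~\ref{lemma-Riemann integral}, and are absorbed into the ``provided the integrals converge'' proviso.
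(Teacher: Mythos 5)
Your proposal is correct, and for part (b) it follows essentially the same route as the paper: the identity $R_G(u,v)=\tfrac12(\rho_G(u)+\rho_G(v)-\rho_G(v-u))$, the decomposition $Z_t=e^{-\theta t}Z_0+e^{-\theta t}\int_0^te^{\theta s}dG_s$ fed into \eqref{cov-wienner-integ-infty}, Lemma \ref{calculcov} for \eqref{cov of X-stat-incr}, the relation \eqref{link-X-Z} plus stationarity (and Cauchy--Schwarz, which the paper leaves implicit) for \eqref{var of X-stat-incr}, and the same splitting of $\theta^2\iint e^{\theta(u+v)}R_G(u,v)\,du\,dv$ for \eqref{var-Z-stat-incr}. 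The genuine difference is in part (a): the paper simply invokes Theorem 2.1 of Barndorff-Nielsen and Basse-O'Connor \cite{BB} for the existence of a unique-in-law stationary solution, whereas you give a self-contained shift argument ($Z_{t+h}$ is the same measurable functional of $\widetilde G_s:=G_{s+h}-G_h$ as $Z_t$ is of $G$, via the integration-by-parts representation \eqref{IBP-G-infty}); your version is more elementary and makes the role of the stationary-increment hypothesis transparent, at the cost of not yielding the uniqueness-in-law statement that the citation provides (which, however, is not needed for the theorem as stated). One bookkeeping point: your (correct) identity $\frac{\partial^2R_G}{\partial u\partial v}=\tfrac12\rho_G''(v-u)$ is consistent with the factor $\tfrac12$ in \eqref{cov-Z-stat-incr}, but when inserted into Lemma \ref{calculcov} it yields \eqref{cov of X-stat-incr} with an extra factor $\tfrac12$ in front of the double integral; this discrepancy is already present in the paper, which states the mixed-partial identity without the $\tfrac12$ in the text yet uses it with the $\tfrac12$ in the displayed computation, so your derivation is the internally consistent one.
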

\begin{proof} Applying Lemma \ref{lemma-Riemann integral} for $\xi=\int_{-\infty }^{0}e^{\theta
s}dG_{s}$, the unique solution to the equation \eqref{Z-OU} can be
expressed as
$$
Z_{t}=\int_{-\infty }^{t}e^{-\theta (t-s)}dG_{s}=G_{t}-\theta
e^{-\theta t} \int_{-\infty}^{t} e^{\theta s} G_{s} d s, \quad t
\geq0.
$$
On the other hand, it follows from  \cite[Theorem 2.1]{BB} that
 the process \[G_{t}-\theta e^{-\theta t} \int_{-\infty}^{t} e^{\theta s}
G_{s} d s,\ t\geq0,\]  is a unique-in-law stationary solution to the
Langevin equation \eqref{Z-OU}. Thus the part \emph{(a)} is proved.
\\
 Let us prove  the part \emph{(b)}. Since $G_0=0$ a.s. and $G$ has stationary
 increments, we have
\begin{eqnarray}R_G(u,v)=\frac12\left[\rho_G(u)+\rho_G(v)-\rho_G(v-u)\right]\mbox{ for
all } u, v\in \R.\label{decom-RG}\end{eqnarray} Combining this with
\begin{equation*} Z_{t}=Z_0+e^{-\theta }\int_{0}^{t}e^{-\theta
s}dG_{s},
\end{equation*}
\eqref{cov-wienner-integ-infty} and $\frac{\partial^2R_G}{\partial
v\partial u}(u,v)=\rho_G''(v-u)$ for all $u<v$, we deduce that
\begin{eqnarray*}
E\left(Z_tZ_0\right)&=&e^{-\theta}E\left(Z_0^2\right)+e^{-\theta}\int_{0}^{t}\int_{-\infty}^{0}
e^{\theta u} e^{\theta
v}\frac{\partial^2R_G}{\partial v\partial u}(u,v)dudv\\
&=&e^{-\theta}E\left(Z_0^2\right)+\frac{e^{-\theta t}}{2}
\int_{0}^{t}\int_{-\infty}^{0} e^{\theta u} e^{\theta
v}\rho_G''(v-u)dudv,
\end{eqnarray*}
 which proves \eqref{cov-Z-stat-incr}. The claim \eqref{cov of
X-stat-incr} is a direct consequence of \eqref{cov of X GOU} and
$\frac{\partial^2R_G}{\partial v\partial u}(u,v)=\rho_G''(v-u)$ for
all $u\neq v$. Furthermore, the inequality \eqref{var of
X-stat-incr} follows
immediately from \eqref{link-X-Z} and the stationarity of $Z$.\\
 Now, it remains to prove \eqref{var-Z-stat-incr}. According
to \eqref{IBP-G-infty} and \eqref{decom-RG}, we can write
\begin{eqnarray*}
 E\left(Z_0^2\right)&=&\theta^2 \int_{-\infty}^0 \int_{-\infty}^0e^{\theta u}e^{\theta
 v}R_G(u,v)dudv\\&=&\theta^2  \int_{-\infty}^0\int_{-\infty}^0e^{\theta u}e^{\theta
 v}\rho_G(u)dudv-\theta^2  \int_{-\infty}^0\int_{-\infty}^ve^{\theta u}e^{\theta
 v}\rho_G(v-u)dudv\\
 &=&\theta  \int_{-\infty}^0e^{\theta u}\rho_G(u)du-\theta^2  \int_{-\infty}^0\int_{0}^{\infty}e^{-\theta x}e^{2\theta
 v}\rho_G(x)dxdv\\ &=&\frac{\theta}{2}\int_{0}^{\infty}  e^{-\theta
x}\rho_G(x)dx.
\end{eqnarray*}
Therefore the proof is complete.
\end{proof}

As examples we consider  Hermite Ornstein-Uhlenbeck processes of the
first kind and Gaussian  Ornstein-Uhlenbeck processes of the second
kind and study   the decay of their auto-covariance functions.

\subsection{Fractional  Ornstein-Uhlenbeck processes}

Here we consider the fractional  Ornstein-Uhlenbeck process
\begin{equation}X_t^H := e^{-\theta t} \int_0^te^{\theta s}dB_s^H,\label{XH}\end{equation}
that is, the solution to the Langevin equation \eqref{GOU} in the
case when $G=B^H$ is a fractional Brownian motion with Hurst
parameter
$H\in(0,1)$.\\
Since $B^H$ is Gaussian and
$$E\left(B^H_t-B^H_s\right)^2=|s-t|^{2H};\ s,\ t\geq~0,$$ we deduce
that the assumption $(\mathcal{A})$ holds for $G=B^H$, according to
Remark \ref{wiener-chaos-hypercontractivity}.\\
 So, if $\theta>0$,
the integral \begin{equation} Z_{t}^H:=\int_{-\infty }^{t}e^{-\theta
(t-s)}dB_{s}^H\label{ZH}
\end{equation}
is well defined as a pathwise Riemann-Stieltjes integral and we have
\begin{equation*}
 X_t^H = Z_{t}^H-e^{-\theta t} Z_{0}^H.
\end{equation*}

Let us now state  properties of the processes $X^H$ and $Z^H$,
defined by \eqref{XH} and \eqref{ZH}, respectively.

\begin{theorem}\label{thm-fBm}Assume that $H\in(0,1)$ and $\theta>0$.  Let $X^H$ and
$Z^H$ be the processes defined  by \eqref{XH} and \eqref{ZH},
respectively. Then
\begin{enumerate}
\item[(i)]  $Z^H$ is an ergodic stationary Gaussian process.

\item[(ii)] For any integer $p\geq1$, there exists $C>0$ depending only on
$\theta,H$ and $p$ such that
\begin{itemize}
\item if $p$ is even, $\left|E\left[\left(X^H_t\right)^p\right]-E\left[\left(Z^H_0\right)^p\right]\right|\leq
Ce^{-\theta t}$ for all $t\geq0$, with
\[E\left[\left(Z^H_0\right)^p\right]=\frac{p!}{2^{\frac{p}{2}}\left(\frac{p}{2}\right)!}\left(\frac{H\Gamma(2H)}{\theta^{2H}}\right)^{\frac{p}{2}},\]
\item  if $p$ is odd,
$E\left[\left(X^H_t\right)^p\right]=E\left[\left(Z^H_0\right)^p\right]=0$.
\end{itemize}

\item[(iii)] If   $H\in(0,\frac12)\cup(\frac12,1)$,
$E\left(Z_{t}^HZ_{0}^H\right)\sim\frac{t^{2H-2}}{\theta^2}$ as
$t\rightarrow\infty$. If $H=\frac12$,
$E\left(Z_{t}^{\frac12}Z_{0}^{\frac12}\right)=\frac{e^{-\theta
t}}{2\theta}$.

\item[(iv)] If   $H\in(0,\frac12)\cup(\frac12,1)$,  there exists $C>0$ depending only on
$\theta$ and $H$  such that
$$E\left(X_{t}^HX_{s}^H\right)\leq C|t-s|^{2H-2}\ \mbox{ for all } |t-s|>2,$$ and if
$H=\frac12$, $E\left(X_{t}^{\frac12}X_{s}^{\frac12}\right)\leq
Ce^{-\theta|t-s|}$ for all $|t-s|>2$.
\end{enumerate}
\end{theorem}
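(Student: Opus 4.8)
The plan is to specialize the results of Theorem~\ref{thm-stat-incr} to $G=B^H$, after checking that fractional Brownian motion fits the framework. Since $B^H$ is centered Gaussian with $E(B^H_t-B^H_s)^2=|t-s|^{2H}$, Remark~\ref{wiener-chaos-hypercontractivity} shows that $(\mathcal{A})$ holds with $\gamma=H$, and $B^H$ has stationary increments with $\rho_{B^H}(t)=|t|^{2H}$. Part (i) then follows at once: stationarity of $Z^H$ is Theorem~\ref{thm-stat-incr}(a), and $Z^H$ is Gaussian because it lies in the first Wiener chaos of $B^H$ (it is a pathwise limit of Gaussian Riemann--Stieltjes sums). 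Ergodicity I would defer until after part (iii): once $E(Z^H_tZ^H_0)\to0$ as $t\to\infty$ is established there, the criterion for stationary Gaussian processes quoted in the Introduction (Lindgren, Theorem~6.6) gives ergodicity.

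For part (ii), the key point is that $X^H_t$ and $Z^H_0$ are centered Gaussian, so their moments are determined by their variances: odd moments vanish and, for even $p$, $E(N^p)=\frac{p!}{2^{p/2}(p/2)!}(EN^2)^{p/2}$ for any centered Gaussian $N$. The explicit variance comes from \eqref{var-Z-stat-incr}: with $\rho_{B^H}(t)=t^{2H}$ one gets $E((Z^H_0)^2)=\frac{\theta}{2}\int_0^\infty e^{-\theta t}t^{2H}\,dt=\frac{\Gamma(2H+1)}{2\theta^{2H}}=\frac{H\Gamma(2H)}{\theta^{2H}}$, which yields the stated even-moment formula. To compare with $X^H_t$, I would combine \eqref{var of X-stat-incr}, i.e. $|E((X^H_t)^2)-E((Z^H_0)^2)|\le Ce^{-\theta t}$, with the fact that $a\mapsto a^{p/2}$ is Lipschitz on any bounded interval containing the two (bounded, convergent) variances; this upgrades the variance estimate to $|E((X^H_t)^p)-E((Z^H_0)^p)|\le Ce^{-\theta t}$ after multiplying by the combinatorial constant.

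Part (iii) is a direct substitution into \eqref{cov-Z-stat-incr}. For $H\neq\frac12$ I would use $\rho_{B^H}''(v-u)=2H(2H-1)(v-u)^{2H-2}$ for $v>u$ (legitimate since $\rho_{B^H}\in C^2(\R\setminus\{0\})$); the leading term $e^{-\theta t}E((Z^H_0)^2)$ decays exponentially and is negligible, while the double integral is handled by \eqref{equi1} with $\gamma=H$, which is asymptotic to $e^{\theta t}\theta^{-2}t^{2H-2}$. Assembling the prefactors $\tfrac12e^{-\theta t}$ and $2H(2H-1)$ from $\rho_{B^H}''$ with this asymptotic produces the claimed power decay of order $t^{2H-2}/\theta^2$. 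The case $H=\frac12$ is degenerate: $\rho_{B^H}(t)=|t|$ gives $\rho_{B^H}''\equiv0$ on $\R\setminus\{0\}$, so the integral vanishes and $E(Z^{1/2}_tZ^{1/2}_0)=e^{-\theta t}E((Z^{1/2}_0)^2)=\frac{e^{-\theta t}}{2\theta}$, using $\int_0^\infty e^{-\theta t}t\,dt=\theta^{-2}$.

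Part (iv) follows the same template from \eqref{cov of X-stat-incr}. The first summand $e^{-\theta(t-s)}E((X^H_s)^2)$ is bounded by $Ce^{-\theta|t-s|}$ because $E((X^H_s)^2)$ is bounded (by \eqref{var of X-stat-incr}), and for $|t-s|>2$ the exponential is dominated by $|t-s|^{2H-2}$; the double-integral summand is controlled by \eqref{ineq1} (after taking absolute values, since $2H-1<0$ when $H<\frac12$), giving the bound $C|t-s|^{2H-2}$, while for $H=\frac12$ the integral again drops out and only the exponential bound remains. I do not expect a genuine obstacle here, since the hard analytic work is already carried out inside the asymptotic lemma \eqref{equi1}--\eqref{ineq1}; the points requiring care are the exact matching of integration variables and domains between \eqref{cov of X-stat-incr}/\eqref{cov-Z-stat-incr} and the kernel estimates, the sign bookkeeping when $H<\frac12$, and the separate treatment of the degenerate $H=\frac12$ case where the second-derivative kernel vanishes identically.
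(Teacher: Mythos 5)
Your proposal is correct and follows essentially the same route as the paper: specialize Theorem~\ref{thm-stat-incr} to $G=B^H$ with $\rho_{B^H}(t)=|t|^{2H}$, use \eqref{var-Z-stat-incr} and the Gaussian moment identities for (ii), \eqref{cov-Z-stat-incr} together with \eqref{equi1} for (iii), and \eqref{cov of X-stat-incr} with \eqref{ineq1} (plus the degenerate $H=\tfrac12$ case) for (iv). One remark: carrying your prefactors $\tfrac12$ and $2H(2H-1)$ through \eqref{equi1} actually yields the constant $H(2H-1)/\theta^{2}$ rather than $1/\theta^{2}$ in (iii); this discrepancy is present in the paper's own statement as well and does not affect the order of decay or any of the remaining claims.
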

\begin{proof}These claims can easily  be obtained
using Theorem \ref{thm-stat-incr} and \eqref{equi1}. The claims
\emph{(i)} and \emph{(iii)} have been obtained previously   by
\cite{CKM} and \cite{BB}.  For the point \emph{(ii)}, it follows
from \cite{EV} that
\[E\left[\left(Z^H_0\right)^p\right]=\left\{
\begin{array}{ll} 0 & \text { if } p \text { is odd}, \\
\frac{p!}{2^{\frac{p}{2}}\left(\frac{p}{2}\right)!}\left(\frac{H\Gamma(2H)}{\theta^{2H}}\right)^{\frac{p}{2}}
& \text { if } p \text { is even}.
\end{array} \right.\]
Combining this  with \emph{(i)}, Gaussianity of $X^H$ and $Z^H$ and
\eqref{link-X-Z}, the claim \emph{(ii)} is obtained.
  The fourth part of Theorem
\ref{thm-fBm} is an obvious consequence of the third part and the
decomposition \eqref{link-X-Z}.

\end{proof}

\begin{remark} When $\theta<0$, the  properties of the processes $X^H$ and $Z^H$
given by \eqref{XH} and \eqref{ZH}, respectively, are very different
from those corresponding to the case $\theta>0$ given in Theorem
\ref{thm-fBm}. For instance, if $\theta<0$, $e^{-\theta
t}X_t^H\longrightarrow\theta\int_0^{\infty}e^{-\theta s}B_s^H ds$
almost surely and in $L^2(\Omega)$ as $t\rightarrow\infty$. We refer
only to \cite{EEO} and \cite{EE} for information about this case and
additional references.
\end{remark}

\subsection{  Hermite  Ornstein-Uhlenbeck processes}

The Hermite process $G^{(q, H)}:=\left\{G_{t}^{(q, H)},
t\in\R\right\}$ of order $q \geq 1$ and  Hurst parameter $H
\in\left(\frac{1}{2}, 1\right)$   is defined as a multiple
Wiener-It\^o integral of the form
\begin{equation}
G_{t}^{(q, H)}=d(q, H) \int_{\mathbb{R}} d W\left(y_{1}\right)
\ldots \int_{\mathbb{R}} d
W\left(y_{q}\right)\left(\int_{0}^{t}\left(s-y_{1}\right)_{+}^{-\left(\frac{1}{2}+\frac{1-H}{q}\right)}
\ldots\left(s-y_{q}\right)_{+}^{-\left(\frac{1}{2}+\frac{1-H}{q}\right)}
d s\right)\label{hermite}
\end{equation} for every $t \in\R$, where $x_{+}^{\alpha}=x^{\alpha} 1_{(0,
\infty)}(x),$ $\int_0^t:=-\int_t^0$ if $t<0$, and $\left\{W(y),y \in
\mathbb{R}\right\}$ is a Wiener process, whereas $d(q, H)$ is a
normalizing positive constant chosen to ensure that
$\mathbf{E}\left[(G_{1}^{(q, H)})^{2}\right]=1$.

Except for Gaussianity, Hermite processes of order $q \geq 2$ share
many properties with the fBm (corresponding to q = 1). First note
that,  according to the fact that $G^{(q, H)}$ is Hermite,
$$E\left(G^{(q, H)}_t-G^{(q, H)}_s\right)^2=|s-t|^{2H};\ s,\ t\in\R,$$ and Remark \ref{wiener-chaos-hypercontractivity}, we deduce
that the assumption $(\mathcal{A})$ holds for $G=G^{(q, H)}$.
Moreover, the Hermite process \eqref{hermite} is $H$-self-similar
and it has stationary increments. Its covariance coincides with the
covariance of the fBm for all $q \geq 1$, that is, for every $q \geq
1$,
\begin{equation}
E\left( G_{t}^{(q, H)} G_{s}^{(q, H)}\right)=\frac{1}{2}\left(t^{2
H}+s^{2 H}-|t-s|^{2 H}\right), \quad s,\ t\in\R.\label{cov-hermite}
\end{equation}
The class of Hermite processes also includes the Rosenblatt process
which is obtained for $q=2$. The Hermite process is non-Gaussian if
$q \geq 2$. These processes have attracted a lot of interest in the
recent past (see the monographs \cite{PT}, \cite{tudor}  and the
references therein).\\
The Wiener integral of a deterministic function $f$ with respect to
a Hermite process $G^{(q, H)}$ which we denote by $\int_{\mathbb{R}}
f(u) d G_{u}^{(q, H)}$, has been constructed by \cite{MT}.\\
We recall that the stochastic integral $\int_{\mathbb{R}} f(u) d
G_{u}^{(q, H)}$ is well-defined for any $f$ belonging to the space
$|\mathcal{H}|$ of functions $f: \mathbb{R} \rightarrow \mathbb{R}$
such that
$$
\int_{\mathbb{R}} \int_{\mathbb{R}}|f(u) f(v)| |u-v|^{2 H-2} d u d
v<\infty.
$$
Further, for any $f, g \in|\mathcal{H}|,$ that \begin{equation}
\mathbb{E}\left[\int_{\mathbb{R}} f(u) d G_{u}^{(q, H)}
\int_{\mathbb{R}} g(v) d G_{u}^{(q, H)}\right]=H(2 H-1)
\int_{\mathbb{R}} \int_{\mathbb{R}} f(u) g(v)|u-v|^{2 H-2} d u d
v.\label{inner-pro-hermite}
\end{equation}
Now, let us consider the Hermite  Ornstein-Uhlenbeck process
\begin{equation}X_t^{(q,H)} := e^{-\theta t} \int_0^te^{\theta s}dG_s^{(q,H)},\quad t\geq0,\label{X-qH}\end{equation}
that is, the solution to the Langevin equation \eqref{GOU} in the
case when $G=G^{(q,H)}$ is the Hermite process  of order $q \geq 1$
and  Hurst parameter $H \in\left(\frac{1}{2}, 1\right)$, according
to Lemma \ref{lemma-Riemann integral}.
\\
Further, if $\theta>0$, the process \begin{equation}
Z_{t}^{(q,H)}:=\int_{-\infty }^{t}e^{-\theta
(t-s)}dG_{s}^{(q,H)},\quad t\geq0,\label{Z-qH}
\end{equation}
is well defined in the Riemann-Stieltjes sense,  and we have
\begin{equation*}
 X_t^{(q,H)} = Z_{t}^{(q,H)}-e^{-\theta t} Z_{0}^{(q,H)},\quad t\geq0.
\end{equation*}
By Theorem \ref{thm-stat-incr}, $Z^{(q,H)}$ is stationary.
Furthermore, using \eqref{inner-pro-hermite}, stationarity of $Z^H$,
and $H>\frac12$, we have, for every $s,\ t\geq0$,
\[E\left(Z_{t}^{(q,H)}Z_{0}^{(q,H)}\right)=E\left(Z_{t}^HZ_{0}^H\right)
\quad \mbox{ for all }\theta>0,\] and
\[E\left(X_{t}^{(q,H)}X_{s}^{(q,H)}\right)=E\left(X_{t}^HX_{s}^H\right)
\quad \mbox{ for all }\theta\in\R, \] where $X^H$ and $Z^H$ are the
processes given by \eqref{XH} and \eqref{ZH}, respectively.
Combining the results above with Theorem \ref{thm-stat-incr} and
Theorem \ref{thm-fBm} leads to the following theorem.
\begin{theorem}\label{thm-hermite}Assume that $H>\frac12$ and $\theta>0$. Let $X^{(q,H)}$ and
$Z^{(q,H)}$ be the processes defined  by \eqref{X-qH} and
\eqref{Z-qH}, respectively. Then
\begin{itemize}
\item  $Z^{(q,H)}$ is a stationary process,
 and
 $E\left[\left(Z^{(q,H)}_0\right)^2\right]=\frac{H\Gamma(2H)}{\theta^{2H}}.$

 \item There exists a constant $C>0$ depending only on $\theta$ and $H$ such that, for all
 $t\geq0$,
  $$\left|E\left[\left(X^{(q,H)}_t\right)^2\right]-\frac{H\Gamma(2H)}{\theta^{2H}}\right|\leq
Ce^{-\theta t}.$$

\item
$E\left(Z_{t}^{(q,H)}Z_{0}^{(q,H)}\right)\sim\frac{t^{2H-2}}{\theta^2}$
as $t\rightarrow\infty$.

\item There exists a constant $C>0$ depending only on $\theta$ and $H$ such that, for all
 $|t-s|>2$,
$E\left(X_{t}^{(q,H)}X_{s}^{(q,H)}\right)\leq C|t-s|^{2H-2}$.
\end{itemize}
\end{theorem}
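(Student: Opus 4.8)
The plan is to reduce the entire statement to the fractional Brownian case already settled in Theorem \ref{thm-fBm}, exploiting the fact that every quantity appearing in the theorem is a second-order (covariance-type) quantity and that the Hermite process $G^{(q,H)}$ shares its covariance function with $B^H$. Indeed, by \eqref{cov-hermite} we have $R_{G^{(q,H)}}(s,t)=R_{B^H}(s,t)$ for all $s,t$, so the diagonal $\rho_{G^{(q,H)}}$ and the off-diagonal mixed partial derivative agree with those of $B^H$; a direct computation (the pure terms in $x$ and in $y$ are annihilated) gives $\frac{\partial^2 R_{G^{(q,H)}}}{\partial y\partial x}(x,y)=H(2H-1)|x-y|^{2H-2}$, which is exactly the kernel of the Hermite Wiener isometry \eqref{inner-pro-hermite}.

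First I would verify that the defining integrands lie in the integrability class needed for the covariance formulas to apply. For $X_t^{(q,H)}$ the integrand $e^{\theta s}\mathbf{1}_{[0,t]}(s)$ is bounded with compact support, hence trivially in $|\mathcal{H}|$, and for $Z_t^{(q,H)}$ the integrand $e^{-\theta(t-s)}\mathbf{1}_{(-\infty,t]}(s)$ decays exponentially as $s\to-\infty$, so $\int\!\int|f(u)f(v)||u-v|^{2H-2}\,du\,dv$ converges. With this secured, the variance formula \eqref{decomp-var-GOU} and the covariance formula \eqref{cov of X GOU} (together with their $s=-\infty$ analogues \eqref{cov-Z-stat-incr} and \eqref{var-Z-stat-incr} for $Z$) depend on $G$ only through $R_G$ and $\frac{\partial^2 R_G}{\partial y\partial x}$. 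Since these coincide for $G^{(q,H)}$ and $B^H$, I conclude
\[E\!\left(Z_t^{(q,H)}Z_0^{(q,H)}\right)=E\!\left(Z_t^{H}Z_0^{H}\right),\qquad E\!\left(X_t^{(q,H)}X_s^{(q,H)}\right)=E\!\left(X_t^{H}X_s^{H}\right),\]
and in particular $E[(Z_0^{(q,H)})^2]=E[(Z_0^{H})^2]$ and $E[(X_t^{(q,H)})^2]=E[(X_t^{H})^2]$.

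Once this second-moment matching is in place, every bullet transfers. Stationarity of $Z^{(q,H)}$ is immediate from Theorem \ref{thm-stat-incr}(a), since $G^{(q,H)}$ has stationary increments and satisfies $(\mathcal{A})$. The value $E[(Z_0^{(q,H)})^2]=\frac{H\Gamma(2H)}{\theta^{2H}}$ follows from the matching together with the $p=2$ case of Theorem \ref{thm-fBm}(ii); the exponential approach $|E[(X_t^{(q,H)})^2]-\frac{H\Gamma(2H)}{\theta^{2H}}|\le Ce^{-\theta t}$ is exactly \eqref{var of X-stat-incr} applied to $G=G^{(q,H)}$. The asymptotics $E(Z_t^{(q,H)}Z_0^{(q,H)})\sim t^{2H-2}/\theta^2$ and the bound $E(X_t^{(q,H)}X_s^{(q,H)})\le C|t-s|^{2H-2}$ for $|t-s|>2$ are then copied from parts (iii) and (iv) of Theorem \ref{thm-fBm} through the identities above.

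The step I expect to be the crux is recognizing precisely which quantities transfer and which do not. The reduction is legitimate only at the level of \emph{second} moments, because only the covariance functions (equivalently, the Wiener isometries and their mixed partials) of $G^{(q,H)}$ and $B^H$ coincide; for $q\ge2$ the Hermite process is genuinely non-Gaussian, so higher moments differ. This is exactly why the theorem invokes the full even-moment formula of Theorem \ref{thm-fBm}(ii) only for $p=2$ and is phrased entirely in terms of covariances. The single technical point requiring care is the integrability/convergence check for the $Z$-integrands together with the justification that the pathwise Riemann--Stieltjes construction underlying \eqref{X-qH}--\eqref{Z-qH} yields the same second moments as the Wiener-integral isometry \eqref{inner-pro-hermite}; the consistency of Lemma \ref{lemma-cov} with \eqref{inner-pro-hermite}, both producing the kernel $H(2H-1)|x-y|^{2H-2}$, closes that gap.
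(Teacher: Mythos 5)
Your proposal is correct and follows essentially the same route as the paper: the author likewise observes that $G^{(q,H)}$ satisfies $(\mathcal{A})$ and has stationary increments (so Theorem \ref{thm-stat-incr} gives stationarity of $Z^{(q,H)}$), and then uses the isometry \eqref{inner-pro-hermite} with $H>\frac12$ to identify all second moments of $X^{(q,H)}$ and $Z^{(q,H)}$ with those of $X^{H}$ and $Z^{H}$, after which every bullet is read off from Theorem \ref{thm-fBm}. Your added remarks on the integrability of the integrands and on the fact that only second-order quantities transfer (the process being non-Gaussian for $q\geq 2$) are sound and consistent with the paper's argument.
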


\subsection{  Gaussian  Ornstein-Uhlenbeck processes of the second
kind} Let  $U:=\{U_t,t\geq0\}$ be a Gaussian process satisfies the
assumption $(\mathcal{A})$. In addition, we assume that the process
$U$ is $\gamma$-self-similar, that is, $\left\{U_{b t}, t \geq
0\right\} \stackrel{Law}{=}\left\{b^{\gamma} U_{t}, t \geq
0\right\}$ for all $b>0$. Hence, the integrals $\int_0^t
e^{-s}dU_{a_s}$ if $t\geq0$ and  $\int_t^0 e^{-s}dU_{a_s}$ if $t<0$,
with $a_t:=\theta e^{\frac{t}{\theta}}$, are well defined as
Riemann-Stieltjes integrals, and let
$Y_{U}^{(1)}:=\left\{Y_{t,U}^{(1)},t\in\R \right\}$ denote the
process defined by  $Y_{t,U}^{(1)}:=\int_0^t e^{-s}dU_{a_s}$ if
$t\geq0$ and $Y_{t,U}^{(1)}:=-\int_t^0 e^{-s}dU_{a_s}$ if $t<0$.
\\
Let us introduce the  following processes,
\begin{equation}\label{eta-def}L_t:=e^{-t}U_{a_t}-U_{a_0},\ t\in\R, \   \eta_t:=\int_{0}^{t}e^{-s}U_{a_s}ds \mbox{ for }
t\geq0,\ \eta_t:=-\int_{t}^{0}e^{-s}U_{a_s}ds \mbox{ for }  t<0.
\end{equation}
Integrating by parts, we get
\begin{equation}\label{Y-decomp}Y_{t,U}^{(1)}=L_t+\eta_t\ \mbox{ for all } t\in\R.
\end{equation}
Define
\begin{eqnarray}f_U(x):=\gamma^{2\gamma}R_U(e^{\frac{x}{2\gamma}},e^{-\frac{x}{2\gamma}})
=\gamma^{2\gamma}E\left(U_{e^{\frac{x}{2\gamma}}}U_{e^{-\frac{x}{2\gamma}}}\right),\quad
x\in\R.\label{f-def}\end{eqnarray}

We will make use of the following lemmas.
\begin{lemma}\label{lemma-Y}
Let $\{L_t,t\in\R\}$ and $\{\eta_t,t\in\R\}$ be the processes given
by (\ref{eta-def}), and let   $f_U$ be the even function defined by
(\ref{f-def}). Then, for every $s,t\in\R$,
\begin{eqnarray}
E\left(\eta_t\eta_s\right)=h_U(s)+h_U(t)-h_U(|t-s|),\label{cov-eta}
\end{eqnarray}
\begin{eqnarray}E\left(L_sL_t\right)=  f_U(|t-s|)-f_U(t)-f_U(s)+f_U(0),\label{cova-L}
\end{eqnarray}
and
\begin{eqnarray}E\left(L_s\eta_t\right)+E\left(L_t\eta_s\right)=0,\label{cova-L-eta}
\end{eqnarray}where $h_U(t):=\int_0^{|t|}(|t|-x)f_U(x)dx$  for all $t\in\R$.\\
 Hence,  for every $s,t\in\R$,
 \begin{eqnarray}R_{Y_{U}^{(1)}}\left(s,t\right)=E\left(Y_{s,U}^{(1)}Y_{t,U}^{(1)}\right)=E\left(L_sL_t\right)+E\left(\eta_s\eta_t\right),\label{cova-Y}
\end{eqnarray}
and, if we suppose that $f_U$ is twice continuously differentiable
on $\R\setminus\{0\}$, we have for every $s,t\in\R$ with $s\neq t$,
 \begin{eqnarray}\frac{\partial^2R_{Y_{U}^{(1)}}}{\partial t\partial
 s}\left(s,t\right)=f_U(|t-s|)-f_U''(|t-s|).
 \label{deriv-cova-Y}
\end{eqnarray}
Moreover, for every $s,t\in\R$,
 \begin{eqnarray}E\left[\left(Y_{t,U}^{(1)}-Y_{s,U}^{(1)}\right)^2\right]=2f_U(0)-2f_U(|t-s|)+2h_U(|t-s|),\label{stat-incr-Y}
\end{eqnarray}
which implies that the Gaussian process $Y_{U}^{(1)}$
has stationary increments.\\
In addition, if we suppose that $\int_{0}^{\infty}
|f_U(x)|dx<\infty$, then the  process $Y_{U}^{(1)}$  satisfies the
assumption $(\mathcal{A})$.
\end{lemma}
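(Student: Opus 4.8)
The plan is to establish each identity by reducing everything to the three processes $L$, $\eta$, and their cross terms, and then to compute covariances using the self-similarity of $U$ together with the change of variable that turns the time-scale $a_t=\theta e^{t/\theta}$ into the exponential weights appearing in $f_U$. First I would verify that $f_U$ is even: since $U$ is $\gamma$-self-similar, $R_U(b,c)=b^{\gamma}c^{\gamma}R_U(1,1)$ scaling can be combined with $R_U(e^{x/2\gamma},e^{-x/2\gamma})$, and swapping $x\mapsto-x$ interchanges the two arguments of the symmetric covariance $R_U$, so $f_U(-x)=f_U(x)$. This evenness is used repeatedly below.

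Next I would compute $E(\eta_t\eta_s)$ directly from the definition $\eta_t=\int_0^t e^{-s}U_{a_s}\,ds$. Writing the product as a double integral $\int_0^t\int_0^s e^{-u}e^{-v}E(U_{a_u}U_{a_v})\,du\,dv$ and applying the self-similarity of $U$ to the covariance $E(U_{a_u}U_{a_v})=R_U(\theta e^{u/\theta},\theta e^{v/\theta})$, I expect the integrand to collapse (via the substitution aligning $u/\theta$ and $v/\theta$ with the arguments of $f_U$) into something depending only on $|u-v|$, namely a multiple of $f_U(|u-v|)$. The identity $E(\eta_t\eta_s)=h_U(s)+h_U(t)-h_U(|t-s|)$ with $h_U(t)=\int_0^{|t|}(|t|-x)f_U(x)\,dx$ is then exactly what one gets by integrating a function of $|u-v|$ over the rectangle $[0,s]\times[0,t]$ and recognizing the convolution-type kernel; this is the standard identity $\int_0^t\int_0^s g(|u-v|)\,du\,dv = G(s)+G(t)-G(|t-s|)$ where $G$ is the second antiderivative of $g$. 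For $E(L_sL_t)$ with $L_t=e^{-t}U_{a_t}-U_{a_0}$, I would expand the product of four terms and apply self-similarity to each covariance $E(U_{a_t}U_{a_s})$, $E(U_{a_t}U_{a_0})$, etc., matching each against a value of $f_U$; the evenness of $f_U$ is what lets me write $f_U(|t-s|)$ uniformly. The cross-term identity (\ref{cova-L-eta}) I would handle by an integration-by-parts argument mirroring (\ref{Y-decomp}): since $Y_{t,U}^{(1)}=L_t+\eta_t$ arises from integrating $e^{-s}dU_{a_s}$ by parts, the symmetric combination $E(L_s\eta_t)+E(L_t\eta_s)$ should telescope to zero, and I would verify this either by direct computation or by differentiating the covariance structure.

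Having (\ref{cov-eta})--(\ref{cova-L-eta}), the decomposition (\ref{cova-Y}) is immediate from $Y^{(1)}=L+\eta$ and bilinearity once the cross terms cancel. For (\ref{deriv-cova-Y}) I would differentiate (\ref{cova-Y}): the $\eta$-part contributes $\partial^2_{s,t}[h_U(s)+h_U(t)-h_U(|t-s|)]$, and since $h_U''=f_U$ (from the definition of $h_U$ as a double antiderivative) the mixed partial of the $h_U(|t-s|)$ term yields $+f_U(|t-s|)$ (the two derivatives in $s$ and $t$ each produce a sign from $|t-s|$, giving a net $+$); the $L$-part contributes $\partial^2_{s,t}[f_U(|t-s|)-f_U(t)-f_U(s)+f_U(0)] = -f_U''(|t-s|)$ by the same sign bookkeeping, and the pure $f_U(t),f_U(s)$ terms vanish under the mixed derivative. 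Summing gives $f_U(|t-s|)-f_U''(|t-s|)$. For (\ref{stat-incr-Y}) I would compute $E[(Y_t-Y_s)^2]=R(t,t)+R(s,s)-2R(s,t)$ from (\ref{cova-Y}) and simplify using $f_U$'s evenness and the definition of $h_U$; the appearance of $2f_U(0)-2f_U(|t-s|)+2h_U(|t-s|)$ as a function of $|t-s|$ alone is precisely the stationary-increments property.

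The main obstacle I anticipate is the careful sign and change-of-variable bookkeeping in the self-similarity reduction: one must confirm that the substitution turning $a_u=\theta e^{u/\theta}$ into the arguments $e^{\pm x/2\gamma}$ of $f_U$ genuinely produces dependence on $|u-v|$ only, and that the factor $\gamma^{2\gamma}$ in the definition of $f_U$ matches the Jacobian and self-similarity exponents exactly (here $\gamma$ plays the role of the self-similarity index, so $a_t=\theta e^{t/\theta}$ must be reconciled with the stated $a_t$, and I would track whether $\theta$ or $\gamma$ governs the scaling). The cancellation (\ref{cova-L-eta}) and the correct signs in the mixed partials (\ref{deriv-cova-Y}) are the delicate points where an error would propagate; everything else is routine once $f_U$ is confirmed even and $h_U''=f_U$ is in hand. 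The final claim that $Y_U^{(1)}$ satisfies $(\mathcal{A})$ under $\int_0^\infty|f_U(x)|\,dx<\infty$ follows from (\ref{stat-incr-Y}) by bounding $2f_U(0)-2f_U(|t-s|)+2h_U(|t-s|)$ appropriately for small $|t-s|$ and invoking Remark \ref{wiener-chaos-hypercontractivity}, since $Y_U^{(1)}$ is Gaussian.
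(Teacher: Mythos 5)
Your overall route coincides with the paper's: reduce everything to $L$ and $\eta$, use self-similarity to show $e^{-u}e^{-v}E\left(U_{a_u}U_{a_v}\right)=f_U(|u-v|)$, derive \eqref{cov-eta}--\eqref{cova-Y} by direct computation (the paper defers these computations to \cite{AAE}), obtain \eqref{deriv-cova-Y} from $h_U''=f_U$ with the same sign bookkeeping, and get \eqref{stat-incr-Y} from $R(t,t)+R(s,s)-2R(s,t)$. Those items are handled correctly, and your flag about the mismatch between the stated $a_t=\theta e^{t/\theta}$ and the $\gamma$ appearing in \eqref{f-def} is a legitimate observation: the time change must be governed by the self-similarity index $\gamma$ for the reduction to a function of $|u-v|$ to come out clean.

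There are two places where your outline stops short of a proof. First, \eqref{cova-L-eta} is asserted to ``telescope to zero'' but never verified; it is not automatic, and the honest check is the short computation $E(L_s\eta_t)=F(s)-F(s-t)-F(t)$ with $F(x)=\int_0^x f_U(y)\,dy$, whence $E(L_s\eta_t)+E(L_t\eta_s)=-F(s-t)-F(t-s)=0$ because $F$ is odd ($f_U$ being even). Second, and more seriously, the final claim that $Y_{U}^{(1)}$ satisfies $(\mathcal{A})$ is exactly where the hypothesis $\int_0^{\infty}|f_U(x)|\,dx<\infty$ and the self-similarity of $U$ must actually be used, and you only say ``bounding appropriately for small $|t-s|$.'' The paper's proof has real content here: $|h_U(x)|\leq |x|\int_0^{\infty}|f_U|$ via $h_U'$; $|f_U(0)-f_U(x)|\leq 2|f_U(0)||x|^{\gamma}$ for $x\geq1$; and for $0<x<1$ one writes $f_U(x)=\gamma^{2\gamma}e^{-x}E\left(U_1U_{e^{x/\gamma}}\right)$ by self-similarity and bounds $|f_U(0)-f_U(x)|\leq C(|x|+|x|^{\gamma})$ using Cauchy--Schwarz together with assumption $(\mathcal{A})$ for $U$ applied to $E\left(U_{e^{x/\gamma}}-U_1\right)^2$. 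Without this estimate the last assertion of the lemma is unproved. A small additional slip: $R_U(b,c)=b^{\gamma}c^{\gamma}R_U(1,1)$ is not what $\gamma$-self-similarity gives (it gives $R_U(bs,bt)=b^{2\gamma}R_U(s,t)$); fortunately the evenness of $f_U$ needs only the symmetry of $R_U$, and the correct scaling identity is precisely what you need for the $f_U(|u-v|)$ reduction.
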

\begin{proof}Using similar arguments as in \cite{AAE}, the statements \eqref{cov-eta}-\eqref{cova-Y}
can be immediately proved. For \eqref{deriv-cova-Y}, it follows from
\eqref{cov-eta}-\eqref{cova-Y} that
\[\frac{\partial^2R_{Y_{U}^{(1)}}}{\partial t\partial
 s}\left(s,t\right)=h_U''(|t-s|)-f_U''(|t-s|)\]  for every $s, t\in\R$ such that  $s\neq t$. Furthermore, it is
 clear that $h_U''(x)=f_U(x)$. Thus \eqref{deriv-cova-Y} is
 obtained. The estimate \eqref{stat-incr-Y} follows directly from \eqref{cov-eta}-\eqref{cova-Y}.\\ Let us now prove that $Y_{U}^{(1)}$  satisfies the assumption
$(\mathcal{A})$. Since $U$ is Gaussian, then, using
\eqref{Y-decomp}, the process $Y_{U}^{(1)}$ is Gaussian. Combining
this result with Remark \ref{wiener-chaos-hypercontractivity}, we
see that, in order to show that $Y_{U}^{(1)}$  satisfies
$(\mathcal{A})$, it suffices to prove that for some $C>0$,
\begin{eqnarray*}E\left[\left|Y_{t,U}^{(1)}-Y_{s,U}^{(1)}\right|^{2}\right] \leq
C|t-s|^{\gamma}  \quad \mbox{ for all } s, t \in\R.
\end{eqnarray*}
From \eqref{stat-incr-Y} we have, for every $s,t\in\R$,
 \begin{eqnarray*}E\left[\left(Y_{t,U}^{(1)}-Y_{s,U}^{(1)}\right)^2\right]\leq2|f_U(0)-f_U(|t-s|)|+2|h_U(|t-s|)|.
\end{eqnarray*}
Since $ |h'_U(x)|=|\int_{0}^{x} f_U(x)dx|\leq\int_{0}^{\infty}
|f_U(x)|dx=:C_f<\infty$  for all $x\geq0$, we deduce that
$|h_U(x)|\leq C_f|x|$ for all $x\geq0$.\\
On the other hand, since $U$ is $\gamma$-self-similar, we get for
all $x\geq1$,
\begin{eqnarray*}|f_U(0)-f_U(x)|\leq|f_U(0)|+|f_U(x)|\leq2|f_U(0)|\leq2|f_U(0)||x|^{\gamma}.
\end{eqnarray*}
Moreover, for all $0<x<1$,
\begin{eqnarray*}|f_U(0)-f_U(x)|&=&\gamma^{2\gamma}\left|e^{-x}E\left(U_{1}U_{e^{\frac{x}{\gamma}}}\right)-E\left(U_{1}^2\right)\right|\\
&\leq&\gamma^{2\gamma}\left|(e^{-x}-1)E\left(U_{1}U_{e^{\frac{x}{\gamma}}}\right)\right|+\left|E\left(U_{1}(U_{e^{\frac{x}{\gamma}}}-U_{1})\right)\right|\\
&\leq&C\left(|x|+|x|^{\gamma}\right)\leq C |x|^{\gamma},
\end{eqnarray*} where we used $\frac{e^{-x}-1}{x}\rightarrow-1$ as $x\rightarrow0$
and for all $0<x<1$,
\[\left|E\left(U_{1}(U_{e^{\frac{x}{\gamma}}}-U_{1})\right)\right|\leq
\left(E\left(U_{1}^2\right)\right)^{\frac12}\left(E\left(U_{e^{\frac{x}{\gamma}}}-U_{1}\right)^2\right)^{\frac12}\leq
C\left|\frac{e^{\frac{x}{\gamma}}-1}{x}\right|^{\gamma}|x|^{\gamma}\leq
C|x|^{\gamma},\] according to the assumption $(\mathcal{A})$. Thus
the desired result is obtained.

\end{proof}

\begin{remark}\label{remark-Y1} Since $\{Y_{t,U}^{(1)},t\in\R\}$
has stationary increments, note that
 \begin{eqnarray*}\rho_{Y_{U}^{(1)}}''(t-s)=\frac{\partial^2R_{Y_U^{(1)}}}{\partial t\partial
 s}\left(s,t\right)=f_U(|t-s|)-f_U''(|t-s|),
\end{eqnarray*} according to \eqref{decom-RG} and \eqref{deriv-cova-Y}.
\end{remark}

\begin{lemma}\label{lemma-n-m-gamma}  Define for every  $\gamma>0$,
\[m_{\gamma}(x):=\left(e^{\frac{x}{2\gamma}}-e^{-\frac{x}{2\gamma}}\right)^{2\gamma},\qquad
n_{\gamma}(x):=\left(e^{\frac{x}{2\gamma}}+e^{-\frac{x}{2\gamma}}\right)^{2\gamma}.\]
Then, for every  $\gamma\in(0,\frac12)\cup(\frac12,1)$,
\begin{eqnarray}m_{\gamma}''(x)-m_{\gamma}(x)=\frac{2(2\gamma-1)}{\gamma}\left(e^{\frac{x}{2\gamma}}-e^{-\frac{x}{2\gamma}}\right)^{2\gamma-2},\label{identity-m}
\end{eqnarray}
and
\begin{eqnarray}
n_{\gamma}''(x)-n_{\gamma}(x)=\frac{2(1-2\gamma)}{\gamma}\left(e^{\frac{x}{2\gamma}}+e^{-\frac{x}{2\gamma}}\right)^{2\gamma-2}.\label{identity-n}
\end{eqnarray}
\end{lemma}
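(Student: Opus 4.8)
The identities are purely analytic, so the plan is a direct differentiation argument. To keep the bookkeeping clean I would introduce the abbreviations $p(x):=e^{\frac{x}{2\gamma}}-e^{-\frac{x}{2\gamma}}$ and $q(x):=e^{\frac{x}{2\gamma}}+e^{-\frac{x}{2\gamma}}$, so that $m_\gamma=p^{2\gamma}$ and $n_\gamma=q^{2\gamma}$. The whole computation then rests on three elementary facts that I would record first: $p'=\frac{1}{2\gamma}q$, $q'=\frac{1}{2\gamma}p$, and the Pythagorean-type relation $q^2-p^2=4$. The first two facts say that differentiation interchanges $p$ and $q$ up to the factor $\frac{1}{2\gamma}$, and they are exactly what make the second derivatives close up on the same two monomials $p^{2\gamma}$ and $p^{2\gamma-2}$ (respectively $q^{2\gamma}$ and $q^{2\gamma-2}$).

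For the first identity I would compute $m_\gamma'=2\gamma\, p^{2\gamma-1}p'=p^{2\gamma-1}q$, and then differentiate once more by the product rule, obtaining $m_\gamma''=\frac{2\gamma-1}{2\gamma}\,p^{2\gamma-2}q^2+\frac{1}{2\gamma}\,p^{2\gamma}$. Substituting $q^2=p^2+4$ splits the first term into a multiple of $p^{2\gamma}$ and a multiple of $p^{2\gamma-2}$; the two $p^{2\gamma}$ pieces combine (with coefficient $\frac{2\gamma-1}{2\gamma}+\frac{1}{2\gamma}=1$) to exactly $p^{2\gamma}=m_\gamma$, so that subtracting $m_\gamma$ leaves only the $p^{2\gamma-2}$ term with coefficient $\frac{4(2\gamma-1)}{2\gamma}=\frac{2(2\gamma-1)}{\gamma}$, which is precisely \eqref{identity-m}.

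The second identity is the mirror image: one finds $n_\gamma'=q^{2\gamma-1}p$ and $n_\gamma''=\frac{2\gamma-1}{2\gamma}\,q^{2\gamma-2}p^2+\frac{1}{2\gamma}\,q^{2\gamma}$, after which I would use $p^2=q^2-4$ in place of the previous relation. Here the sign of the $4$-contribution flips, so after cancelling the $q^{2\gamma}=n_\gamma$ part one is left with $-\frac{2(2\gamma-1)}{\gamma}\,q^{2\gamma-2}=\frac{2(1-2\gamma)}{\gamma}\,q^{2\gamma-2}$, which is \eqref{identity-n}. There is no genuine obstacle in this lemma; the only thing to watch is the arithmetic of the exponents and the $\frac{1}{2\gamma}$ factors. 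In particular the restriction $\gamma\neq\frac12$ plays no role in the calculation itself (at $\gamma=\frac12$ one has $2\gamma-1=0$ and both sides simply vanish), and is inherited only from the context in which the lemma is subsequently applied.
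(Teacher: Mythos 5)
Your proof is correct and follows essentially the same route as the paper: differentiate twice, then use the identity $\left(e^{\frac{x}{2\gamma}}+e^{-\frac{x}{2\gamma}}\right)^2-\left(e^{\frac{x}{2\gamma}}-e^{-\frac{x}{2\gamma}}\right)^2=4$ to collapse the result (the paper factors out $\left(1-\tfrac{1}{2\gamma}\right)p^{2\gamma-2}$ before invoking this, whereas you substitute $q^2=p^2+4$ first, but the computation is identical). Your closing observation that the restriction $\gamma\neq\tfrac12$ is inherited from the later applications rather than from the identity itself is also accurate.
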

\begin{proof}Let us prove \eqref{identity-m}. We have
\begin{eqnarray*}m_{\gamma}''(x)=\left(1-\frac{1}{2\gamma}\right)\left(e^{\frac{x}{2\gamma}}-e^{-\frac{x}{2\gamma}}\right)^{2\gamma-2}
\left(e^{\frac{x}{2\gamma}}+e^{-\frac{x}{2\gamma}}\right)^2+\frac{1}{2\gamma}\left(e^{\frac{x}{2\gamma}}-e^{-\frac{x}{2\gamma}}\right)^{2\gamma}.
\end{eqnarray*}
This leads to
\begin{eqnarray*}m_{\gamma}''(x)-m_{\gamma}(x)&=&\left(1-\frac{1}{2\gamma}\right)\left(e^{\frac{x}{2\gamma}}-e^{-\frac{x}{2\gamma}}\right)^{2\gamma-2}
\left(\left(e^{\frac{x}{2\gamma}}+e^{-\frac{x}{2\gamma}}\right)^2-\left(e^{\frac{x}{2\gamma}}-e^{-\frac{x}{2\gamma}}\right)^{2}\right)\\
&=&4\left(1-\frac{1}{2\gamma}\right)\left(e^{\frac{x}{2\gamma}}-e^{-\frac{x}{2\gamma}}\right)^{2\gamma-2}
\\&=&\frac{2(2\gamma-1)}{\gamma}\left(e^{\frac{x}{2\gamma}}-e^{-\frac{x}{2\gamma}}\right)^{2\gamma-2},
\end{eqnarray*}
which proves \eqref{identity-m}. Similar reasoning gives
\eqref{identity-n}.
\end{proof}

Now, let us consider the Ornstein-Uhlenbeck process of the second
kind $X_{U}:=\left\{X_{t,U},t\geq0\right\}$,
  defined as the unique (pathwise) solution to
\begin{equation}
    X_{0,U}=0,\quad    dX_{t,U}=-\theta X_t dt+dY_{t,U}^{(1)},\quad
t\geq0.  \label{GSK}
\end{equation}
According to Lemma \ref{lemma-Y}, the process $Y_{U}^{(1)}$
satisfies the assumption $(\mathcal{A})$. So, using  Lemma
\ref{lemma-Riemann integral}, the unique solution of (\ref{GSK}) can
be written as
\begin{equation}\label{X-GSK}
X_{t,U}=e^{-\theta t}\int_0^te^{\theta s}dY_{s,U}^{(1)},\quad t\geq
0.
\end{equation}
Moreover,  for any $\theta>0$, the process
\begin{equation}\label{Z-GSK}
Z_{t,U}:=\int_{-\infty }^{t}e^{-\theta (t-s)}dY_{s,U}^{(1)},\quad
t\geq 0
\end{equation}
is  well defined as a Riemann-Stieltjes integral.\\
Hence we can also write
 \begin{equation*}
 X_{t,U} = Z_{t,U}-e^{-\theta t} Z_{0,U},\quad
t\geq 0.
\end{equation*}
Furthermore, since $Y_{U}^{(1)}$ has stationary increments, it
follows from \eqref{var-Z-stat-incr} and \eqref{stat-incr-Y} that
\begin{eqnarray}
E\left(Z_{0,U}^2\right)&=&\theta\int_0^{\infty}\left(f_U(0)-f_U(t)+h_U(t)\right)e^{-\theta
t}dt\nonumber\\
&=&f_U(0)-\theta\int_0^{\infty} f_U(t) e^{-\theta
t}dt+\theta\int_0^{\infty}  e^{-\theta t}\int_0^t(t-x)f_U(x)dx dt
\nonumber\\
&=&f_U(0)-\theta\int_0^{\infty} f_U(t) e^{-\theta
t}dt+\frac{1}{\theta}\int_0^{\infty}  f_U(x)e^{-\theta x} dx\nonumber\\
&=&f_U(0)+\left(\frac{1}{\theta}-\theta\right)\int_0^{\infty}f_U(t)e^{-\theta
t}dt.\label{var of Z-GSK}
\end{eqnarray}

Now we will apply the results above to fractional, subfractional
 and bifractional Ornstein-Uhlenbeck processes of
the second kind.

\subsubsection{Fractional  Ornstein-Uhlenbeck processes of the second
kind}\label{section-FOUSK} Here we consider the fractional
Ornstein-Uhlenbeck process of the second kind
$X_{B^H}:=\left\{X_{t,B^H},t\geq0\right\}$, defined as the unique
solution to \eqref{GSK} when $U=B^H$ is a fBm with Hurst parameter
$H\in(0,1)$. More precisely,
\begin{equation}
    X_{0,B^H}=0,\quad    dX_{t,B^H}=-\theta X_{t,B^H} dt+dY_{t,B^H}^{(1)},\quad
t\geq0.\label{FOUSK}
\end{equation}
 In this case we have, according to \cite[Section 4]{AAE},  $\int_{0}^{\infty}
|f_{B^H}(x)|dx<\infty$. Moreover, for all $x\in\R$,
\begin{eqnarray*}f_{B^H}(x)&=&H^{2H}R_{B^H}(e^{\frac{x}{2 H}},e^{\frac{-x}{2H}})\\ &=&\frac{H^{2H}}{2}\left[e^{x}+e^{-x}
-\left(e^{\frac{x}{2H}}-e^{\frac{-x}{2H}}\right)^{2H}\right]\\
&=&\frac{H^{2H}}{2}\left[e^{x}+e^{-x} - m_H(x)\right],
\end{eqnarray*}
where the function   $m_H(x)$ is defined in Lemma
\ref{lemma-n-m-gamma}. Moreover, using the latter equation, Remark
\ref{remark-Y1} and Lemma \ref{lemma-n-m-gamma}, we get
\begin{eqnarray}
\rho_{Y_{B^H}^{(1)}}''(x)=f_{B^H}(x)-f_{B^H}''(x)= (2H-1)H^{2H-1}
\left(e^{\frac{x}{2H}}-e^{-\frac{x}{2H}}\right)^{2H-2}.\label{identity-rho-fBm}
\end{eqnarray}

\begin{theorem}\label{thm-FOUSK}Assume that  $H\in(0,1)$ and $\theta>0$. Let $\{X_{t,B^H},t\geq0\}$ and $\{Z_{t,B^H},t\geq0\}$ be the processes defined  by
\eqref{X-GSK} and \eqref{Z-GSK} for $U=B^H$, respectively. Then
\begin{itemize}
\item  $\{Z_{t,B^H},t\geq0\}$ is an ergodic stationary Gaussian process,
 and
 \[
E\left(Z_{0,B^H}^2\right)=f_{B^H}(0)+\left(\frac{1}{\theta}-\theta\right)\int_0^{\infty}f_{B^H}(t)e^{-\theta
t}dt.\]

 \item There exists a constant $C>0$ depending only on $\theta$ and $H$ such that, for all
 $t\geq0$,
  $$\left|E\left(X_{t,B^H}^2\right)-E\left(Z_{0,B^H}^2\right)\right|\leq
Ce^{-\theta t}.$$

\item If   $H\in(0,\frac12)\cup(\frac12,1)$, then, as
$t\rightarrow\infty$,
\begin{eqnarray*}E\left(Z_{t,B^H}Z_{0,B^H}\right)&\sim& (2H-1)H^{2H-1}e^{-\min
\left(\theta,\frac{1}{H}-1\right) t}\\&&\quad \times \left\{
\begin{array}{ll} \int_0^{\infty}\left(e^{\theta u}-e^{-\theta
u}\right)\left(e^{\frac{u}{2H}}- e^{-\frac{u}{2H}}\right)^{2H-2}du & \text { if }\ \theta<\frac{1}{H}-1, \\
\frac{t}{2\theta} & \text { if }\ \theta=\frac{1}{H}-1, \\
\frac{1}{\theta^2-\left(\frac{1}{H}-1\right)^2} & \text { if }\
\theta>\frac{1}{H}-1. \end{array} \right.
\end{eqnarray*}

\item  If   $H\in(0,\frac12)\cup(\frac12,1)$, there exists a constant $C>0$ depending only on $\theta$ and $H$ such that, for all
$|t-s|>2$,
\[E\left(X_{t,B^H}X_{s,B^H}\right)\leq C\left\{
\begin{array}{ll}  e^{-\min \left( \theta,\frac{1}{H}-1\right)
|t-s|} & \text { if }\ \theta\neq \frac{1}{H}-1, \\
te^{-\min \left( \theta,\frac{1}{H}-1\right) |t-s|} & \text { if }\
\theta=\frac{1}{H}-1.
\end{array} \right.\]
\end{itemize}
\end{theorem}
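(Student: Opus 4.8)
The plan is to obtain all four assertions from the general stationary-increment result Theorem~\ref{thm-stat-incr}, the only process-specific input being the explicit second derivative \eqref{identity-rho-fBm} of $\rho_{Y_{B^H}^{(1)}}$, together with the asymptotic estimates \eqref{equi2} and \eqref{ineq2} specialized to $\gamma=H$ and to the minus sign. Note first that the hypotheses of Theorem~\ref{thm-stat-incr} are met: by Lemma~\ref{lemma-Y} the noise $Y_{B^H}^{(1)}$ has stationary increments and satisfies $(\mathcal{A})$ (using $\int_0^\infty|f_{B^H}(x)|dx<\infty$ from \cite[Section 4]{AAE}), and by \eqref{identity-rho-fBm} the map $\rho_{Y_{B^H}^{(1)}}$ is twice continuously differentiable on $\R\setminus\{0\}$.

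For the first bullet I would invoke Theorem~\ref{thm-stat-incr}(a) to get stationarity of $\{Z_{t,B^H}\}$ directly. Gaussianity follows because $B^H$ is Gaussian, whence by \eqref{Y-decomp} the noise $Y_{B^H}^{(1)}$ and the Riemann--Stieltjes integral $Z_{t,B^H}=\int_{-\infty}^t e^{-\theta(t-s)}dY_{s,B^H}^{(1)}$ are Gaussian as well. Ergodicity then follows from the criterion for stationary Gaussian processes recalled in the Introduction \cite[Theorem 6.6]{lindgren}, provided $E(Z_{t,B^H}Z_{0,B^H})\to0$, which is a consequence of the third bullet. The value of $E(Z_{0,B^H}^2)$ is exactly \eqref{var of Z-GSK} with $U=B^H$.

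The second and fourth bullets are the exponential estimates. The variance bound is \eqref{var of X-stat-incr} of Theorem~\ref{thm-stat-incr} applied to $X_{t,B^H}=Z_{t,B^H}-e^{-\theta t}Z_{0,B^H}$. For the covariance bound I would start from \eqref{cov of X-stat-incr}, insert $\rho_G''=\rho_{Y_{B^H}^{(1)}}''$ via \eqref{identity-rho-fBm}, and control the double integral by \eqref{ineq2}; the remaining homogeneous term $e^{-\theta(t-s)}E(X_s^2)$ is $O(e^{-\theta|t-s|})$, which is absorbed into $e^{-\min(\theta,\frac{1}{H}-1)|t-s|}$ since $\min(\theta,\frac{1}{H}-1)\le\theta$. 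Adding the two contributions for $|t-s|>2$ yields the stated dichotomy.

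The heart of the matter is the third bullet. Inserting \eqref{identity-rho-fBm} into the autocovariance identity \eqref{cov-Z-stat-incr} gives
\[
E(Z_{t,B^H}Z_{0,B^H})=e^{-\theta t}E(Z_{0,B^H}^2)+\frac{(2H-1)H^{2H-1}}{2}\,e^{-\theta t}\!\int_0^t\!\!\int_{-\infty}^0 e^{\theta u}e^{\theta v}\bigl(e^{\frac{v-u}{2H}}-e^{-\frac{v-u}{2H}}\bigr)^{2H-2}du\,dv,
\]
and applying \eqref{equi2} to the double integral produces the three cases according as $\theta<\frac{1}{H}-1$, $\theta=\frac{1}{H}-1$, or $\theta>\frac{1}{H}-1$. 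I expect the genuine obstacle to lie in the regime $\theta<\frac{1}{H}-1$: there the tail $\rho_{Y_{B^H}^{(1)}}''(w)\sim Ce^{-(\frac{1}{H}-1)w}$ makes the double integral converge, so after multiplication by $e^{-\theta t}$ it is of the \emph{same} exponential order $e^{-\theta t}$ as the homogeneous term $e^{-\theta t}E(Z_{0,B^H}^2)$, and the leading constant must be assembled by adding the two rather than discarding the homogeneous one. In the other two regimes this issue is absent --- for $\theta>\frac{1}{H}-1$ the integral decays at the slower rate $e^{-(\frac{1}{H}-1)t}$ and dominates the $e^{-\theta t}$ homogeneous term, while for $\theta=\frac{1}{H}-1$ the linear factor $\frac{t}{2\theta}$ appearing in \eqref{equi2} outweighs the bounded homogeneous term. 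Tracking these constants, together with the normalizations coming from the factor $\frac12$ in \eqref{cov-Z-stat-incr} and the coefficient in \eqref{identity-rho-fBm}, is the only delicate bookkeeping; the structure of the argument is otherwise a direct appeal to Theorem~\ref{thm-stat-incr} and the asymptotic lemma.
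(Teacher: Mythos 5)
Your proposal follows exactly the route of the paper's own proof, which consists of a two--line appeal to Theorem \ref{thm-stat-incr}, \eqref{equi2}, \eqref{ineq2}, \eqref{var of Z-GSK} and \eqref{identity-rho-fBm}, with ergodicity deduced from the fact that a stationary Gaussian process whose auto-covariance vanishes at infinity is ergodic; structurally there is nothing to add. The one place where you go beyond the paper is precisely the subtlety you flag in the regime $\theta<\frac{1}{H}-1$, and it is worth recording that your instinct there is the more careful one: in \eqref{cov-Z-stat-incr} the homogeneous term $e^{-\theta t}E\left(Z_{0,B^H}^2\right)$ is of the same exponential order $e^{-\theta t}$ as the double-integral term, while the constant displayed in the third bullet of the theorem is only the contribution of the double integral. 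Since $E\left(Z_{0,B^H}^2\right)>0$, there is no cancellation, so if you carry out your prescription of adding the two contributions you will land on a leading constant different from the one stated in that sub-case; you should either record the corrected constant or weaken the claim there to the order of decay. In the regimes $\theta\geq\frac{1}{H}-1$, and in the second and fourth bullets, the homogeneous term is genuinely negligible or absorbed exactly as you describe. One further bookkeeping point to watch: \eqref{cov-Z-stat-incr} is written with $\tfrac12\rho_G''$, whereas \eqref{deriv-cova-Y} and \eqref{identity-rho-fBm} compute $\partial^2R_{Y_{U}^{(1)}}/\partial t\partial s=f_U-f_U''$, and by \eqref{decom-RG} these two quantities differ by a factor of $2$; the object that must be inserted under the double integral is $\partial^2R/\partial t\partial s$, so make sure the factor $\tfrac12$ is not applied twice when you specialize to $U=B^H$.
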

\begin{proof}Except the ergodicity of $Z_{B^H}$,  the results of Theorem \ref{thm-FOUSK} can be
immediately obtained by using Theorem \ref{thm-stat-incr},
\eqref{equi2}, \eqref{ineq2}, \eqref{var of Z-GSK} and
\eqref{identity-rho-fBm}. The ergodicity of $Z_{B^H}$   is an
immediate conclusion of the fact that $Z_{B^H}$ is a stationary
Gaussian process  and its auto-covariance function asymptotically
vanishes (see, for instance, \cite[Example 2.2.8]{samorodnitsky}).
\end{proof}

\begin{remark}Note that the third and fourth parts of Theorem \ref{thm-FOUSK} are
valid for all $H\in(0,\frac12)\cup(\frac12,1)$. However, these two
estimates have  been proved in \cite{KS} only when $H\in(\frac12,1)$
with $\theta\neq\frac{1}{H}-1$.
\end{remark}

\subsubsection{Subfractional  Ornstein-Uhlenbeck processes of the second
kind}\label{section-subFOUSK}

The subfractional Brownian motion (subfBm) $S^H:=\{S^H_t,t\geq0\}$
with parameter $H\in(0, 1)$   is a centered Gaussian process with
covariance function
\[E\left(S^H_tS^H_s\right)=t^{2H}+s^{2H}-\frac{1}{2}\left((t+s)^{2H}+|t-s|^{2H}\right).\]
Note that, when $H=\frac12$, $S^{\frac12}$ is a standard Brownian
motion. The subfBm $S^H$ is self-similar with exponent of
self-similarity $H\in(0,1)$ and its increments are non-stationary.
Moreover, using
$$E\left(S^H_t-S^H_s\right)^2\leq (2-2^{2H-1})|s-t|^{2H};\ s,\
t\geq~0,$$ the fact that $S^H$ is Gaussian and Remark
\ref{hypercontractivity}, we deduce that the assumption
$(\mathcal{A})$ holds for $G=S^H$.  On the other hand,  according to
\cite[Section 4]{AAE},  $\int_{0}^{\infty} |f_{S^H}(x)|dx<\infty$.
Moreover, for all $x\in\R$,
\begin{eqnarray*}f_{S^H}(x)&=&R_{S^H}(e^{\frac{x}{2 H}},e^{\frac{-x}{2H}})\\ &=&e^{x}+e^{-x}
-\frac12\left[\left(e^{\frac{x}{2H}}+e^{\frac{-x}{2H}}\right)^{2H}+\left(e^{\frac{x}{2H}}-e^{\frac{-x}{2H}}\right)^{2H}\right]\\
&=&e^{x}+e^{-x} -\frac12\left[n_H(x)+m_H(x)\right],
\end{eqnarray*}
where the functions $n_H(x)$ and $m_H(x)$ are defined in Lemma
\ref{lemma-n-m-gamma}. Further, using the latter equation, Remark
\ref{remark-Y1} and Lemma \ref{lemma-n-m-gamma}, we get
\begin{eqnarray*}
\rho_{Y_{S^H}^{(1)}}''(x)=f_{S^H}(x)-f_{S^H}''(x)=
(2H-1)H^{2H-1}\left[\left(e^{\frac{x}{2H}}+e^{-\frac{x}{2H}}\right)^{2H-2}
-\left(e^{\frac{x}{2H}}-e^{-\frac{x}{2H}}\right)^{2H-2}\right].\label{identity-rho-subfBm}
\end{eqnarray*}

 Now let us consider the subfractional Ornstein-Uhlenbeck process of the second
 kind  $X_{S^H}:=\left\{X_{t,S^H},t\geq0\right\}$,
  defined as the unique  solution to \eqref{GSK} when $U=S^H$.  In other words,
  $X_{S^H}$ is the solution to the equation
\begin{equation*}
    X_{0,S^H}=0,\quad    dX_{t,S^H}=-\theta X_t dt+dY_{t,S^H}^{(1)},\quad
t\geq0,\label{subFOUSK}
\end{equation*}

Using similar arguments as in Section \ref{section-FOUSK}, we deduce
the following result.

\begin{theorem}\label{thm-subFOUSK}Assume that  $H\in(0,1)$ and $\theta>0$.
Let $\{X_{t,S^H},t\geq0\}$ and $\{Z_{t,S^H},t\geq0\}$ be the
processes defined  by \eqref{X-GSK} and \eqref{Z-GSK} for $U=S^H$,
respectively. Then
\begin{itemize}
\item  $\{Z_{t,S^H},t\geq0\}$ is an ergodic stationary Gaussian process,
 and
 \[
E\left(Z_{0,S^H}^2\right)=f_{S^H}(0)+\left(\frac{1}{\theta}-\theta\right)\int_0^{\infty}f_{S^H}(t)e^{-\theta
t}dt.\]

 \item There exists a constant $C>0$ depending only on $\theta$ and $H$ such that, for all
 $t\geq0$,
  $$\left|E\left(X_{t,S^H}^2\right)-E\left(Z_{0,S^H}^2\right)\right|\leq
Ce^{-\theta t}.$$

\item If   $H\in(0,\frac12)\cup(\frac12,1)$, then, as
$t\rightarrow\infty$,
\begin{eqnarray*}&&E\left(Z_{t,S^H}Z_{0,S^H}\right)\sim (2H-1)H^{2H-1} e^{-\min
\left(\theta,\frac{2}{H}-1\right) t}\nonumber
\\&&\quad \times \left\{
\begin{array}{ll} \int_0^{\infty}\left(e^{\theta u}-e^{-\theta
u}\right)\left[\left(e^{\frac{u}{2H}}+
e^{-\frac{u}{2H}}\right)^{2H-2}
-\left(e^{\frac{u}{2H}}- e^{-\frac{u}{2H}}\right)^{2H-2}\right]du & \text { if } \theta<\frac{2}{H}-1, \\
\frac{(2H-2)te^{\frac{t}{H}}}{\theta} & \text { if } \theta=\frac{2}{H}-1, \\
\frac{4H-4}{\theta^2-\left(\frac{2}{H}-1\right)^2} & \text { if }
\theta>\frac{2}{H}-1. \end{array} \right.
\end{eqnarray*}

\item  If   $H\in(0,\frac12)\cup(\frac12,1)$, there exists a constant $C>0$ depending only on $\theta$ and $H$ such that, for all
$|t-s|>2$,
\[E\left(X_{t,S^H}X_{s,S^H}\right)\leq C\left\{
\begin{array}{ll} e^{-\min \left( \theta,\frac{2}{H}-1\right)
|t-s|} & \text { if }\ \theta\neq \frac{2}{H}-1, \\
te^{-\min \left( \theta,\frac{1}{H}-1\right) |t-s|} & \text { if }\
\theta=\frac{2}{H}-1.
\end{array} \right.\]
\end{itemize}
\end{theorem}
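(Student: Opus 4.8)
The plan is to mirror the proof of the fractional case (Theorem \ref{thm-FOUSK}), replacing the fractional kernel by its subfractional counterpart, and to read off every conclusion from Theorem \ref{thm-stat-incr} together with the asymptotic estimates \eqref{equi2ii} and \eqref{ineq2ii}. First I would record that the driving noise $Y_{S^H}^{(1)}$ is admissible: by Lemma \ref{lemma-Y} it is Gaussian, has stationary increments, and (since $\int_0^\infty |f_{S^H}(x)|\,dx<\infty$) satisfies $(\mathcal{A})$ with $\gamma=H$; moreover $f_{S^H}$ is twice continuously differentiable off the origin, so $\rho_{Y_{S^H}^{(1)}}''$ exists and is continuous on $\R\setminus\{0\}$. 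Hence all hypotheses of Theorem \ref{thm-stat-incr} are met with $G=Y_{S^H}^{(1)}$.

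The crucial preliminary computation is the explicit form of $\rho_{Y_{S^H}^{(1)}}''$. Using Remark \ref{remark-Y1}, the representation of $f_{S^H}$ through $n_H$ and $m_H$, and the identities \eqref{identity-m}--\eqref{identity-n} of Lemma \ref{lemma-n-m-gamma}, one finds that $\rho_{Y_{S^H}^{(1)}}''(x)$ is a constant multiple of the \emph{difference} $\bigl(e^{\frac{x}{2H}}+e^{-\frac{x}{2H}}\bigr)^{2H-2}-\bigl(e^{\frac{x}{2H}}-e^{-\frac{x}{2H}}\bigr)^{2H-2}$. This is the structural distinction from the fractional case, where only the single term $\bigl(e^{\frac{x}{2H}}-e^{-\frac{x}{2H}}\bigr)^{2H-2}$ appears: in the difference the leading exponentials cancel, so the kernel decays at the faster rate $e^{-(\frac{2}{H}-1)x}$ rather than $e^{-(\frac{1}{H}-1)x}$, which is precisely why the threshold $\frac{2}{H}-1$ and the estimates \eqref{equi2ii}, \eqref{ineq2ii} govern this theorem in place of $\frac{1}{H}-1$ and \eqref{equi2}, \eqref{ineq2}.

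With this in hand the first two bullets are immediate: stationarity of $Z_{S^H}$ is Theorem \ref{thm-stat-incr}(a); the formula for $E(Z_{0,S^H}^2)$ is \eqref{var of Z-GSK} specialized to $U=S^H$; and the bound $|E(X_{t,S^H}^2)-E(Z_{0,S^H}^2)|\le Ce^{-\theta t}$ is exactly \eqref{var of X-stat-incr}. For the third bullet I would insert the computed $\rho_{Y_{S^H}^{(1)}}''$ into \eqref{cov-Z-stat-incr} and apply \eqref{equi2ii} with $\gamma=H$ to the resulting double integral, producing the three regimes $\theta<\frac{2}{H}-1$, $\theta=\frac{2}{H}-1$, $\theta>\frac{2}{H}-1$ with the stated constants and the $e^{-\min(\theta,\frac{2}{H}-1)t}$ rate (with the $t\,e^{t/H}$ correction at the threshold). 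The fourth bullet follows either from \eqref{cov of X-stat-incr} combined with \eqref{ineq2ii}, or, more cleanly, from the decomposition \eqref{link-X-Z} together with the covariance estimate for $Z_{S^H}$ just obtained. Finally, ergodicity of $Z_{S^H}$ follows because it is a stationary Gaussian process whose autocovariance tends to $0$ by the third bullet, via the standard criterion (see, e.g., \cite[Example 2.2.8]{samorodnitsky}).

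The step I expect to be the main obstacle is controlling the leading asymptotics in the sub-threshold regime $\theta<\frac{2}{H}-1$. There the stationary-variance term $e^{-\theta t}E(Z_{0,S^H}^2)$ in \eqref{cov-Z-stat-incr} decays at the same exponential rate $e^{-\theta t}$ as the double-integral term supplied by \eqref{equi2ii}, so the two contributions must be tracked together and their constants combined rather than one simply dominating the other; only at and above the threshold (where the integral term carries an $e^{(1-\frac{1}{H})t}$-type factor decaying slower than $e^{-\theta t}E(Z_{0,S^H}^2)$) does the integral term genuinely dictate the rate. A secondary delicate point is the algebra behind $\rho_{Y_{S^H}^{(1)}}''$: one must verify the cancellation of the $e^{x}+e^{-x}$ part of $f_{S^H}$ against its second derivative so that only the $n_H,m_H$ contributions survive, and then apply \eqref{identity-m}--\eqref{identity-n} with the correct signs so that the surviving kernel is the difference of the two power terms carrying the factor $(2H-1)H^{2H-1}$.
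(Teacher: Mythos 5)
Your proposal follows exactly the paper's own route: the paper disposes of this theorem with the phrase ``similar arguments as in Section \ref{section-FOUSK}'', i.e.\ computing $f_{S^H}$ and $\rho_{Y_{S^H}^{(1)}}''$ via Remark \ref{remark-Y1} and Lemma \ref{lemma-n-m-gamma} and then invoking Theorem \ref{thm-stat-incr}, \eqref{var of Z-GSK}, \eqref{equi2ii} and \eqref{ineq2ii}, which is precisely your plan, down to the ergodicity argument via \cite{samorodnitsky}. Your remark that in the regime $\theta<\frac{2}{H}-1$ the term $e^{-\theta t}E\left(Z_{0,S^H}^2\right)$ in \eqref{cov-Z-stat-incr} decays at the same exponential rate as the double-integral term and so must be combined into the leading constant is a legitimate point of care that the paper's one-line proof passes over in silence.
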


\subsubsection{Bifractional  Ornstein-Uhlenbeck processes of the second
kind}\label{section-biFOUSK}

Let $B^{H,K}:=\{B^{H,K}_t,t\geq0\}$ be a bifractional Brownian
motion (bifBm)  with parameters $H\in (0, 1)$ and $K\in(0,1]$. This
means that $B^{H,K}$ is a centered Gaussian process with the
covariance function
\begin{eqnarray*}
E(B^{H,K}_sB^{H,K}_t)=\frac{1}{2^K}\left(\left(t^{2H}+s^{2H}\right)^K-|t-s|^{2HK}\right).
\end{eqnarray*}
  The case $K = 1$ corresponds to the  fBm  with Hurst parameter $H$. The process $B^{H,K}$ verifies \begin{eqnarray*}
 E\left(\left|B^{H,K}_t-B^{H,K}_s\right|^2\right)\leq
2^{1-K}|t-s|^{2HK}.
\end{eqnarray*}
Combining this with the fact that  $B^{H,K}$ is Gaussian and Remark
\ref{hypercontractivity},we deduce that the assumption $(\mathcal{A})$ holds for $G=B^{H,K}$. \\
Furthermore, according to \cite[Section 4]{AAE},  $\int_{0}^{\infty}
|f_{B^{H,K}}(x)|dx<\infty$. We can also write,  for all $x\in\R$,
\begin{eqnarray*}f_{B^{H,K}}(x)&=&R_{B^{H,K}}(e^{\frac{x}{2 HK}},e^{\frac{-x}{2HK}})\\ &=&
\frac{1}{2^K}\left[\left(e^{\frac{x}{K}}+e^{\frac{-x}{K}}\right)^{K}-\left(e^{\frac{x}{2HK}}-e^{\frac{-x}{2HK}}\right)^{2HK}\right]\\
&=&\frac{1}{2^K}\left[n_{\frac{K}{2}}(x)+m_{HK}(x)\right],
\end{eqnarray*}
where the functions $n_{\frac{K}{2}}(x)$ and $m_{HK}$ are defined in
Lemma \ref{lemma-n-m-gamma}. Moreover, using the latter equation,
Remark \ref{remark-Y1} and Lemma \ref{lemma-n-m-gamma}, we get
\begin{eqnarray*}
\rho_{Y_{B^{H,K}}^{(1)}}''(x)&=&f_{B^{H,K}}(x)-f_{B^{H,K}}''(x)\\
&=&\frac{(HK)^{2HK}(K-1)}{2^{K-2}K}\left(e^{\frac{x}{K}}+e^{-\frac{x}{K}}\right)^{K-2}
+\frac{(HK)^{2HK-1}(2HK-1)}{2^{K-1}}\left(e^{\frac{x}{2HK}}-e^{-\frac{x}{2HK}}\right)^{2HK-2}.\label{identity-rho-bifBm}
\end{eqnarray*}

 Now let us consider the bifractional Ornstein-Uhlenbeck process of the second
 kind  $X_{B^{H,K}}:=\left\{X_{t,B^{H,K}},t\geq0\right\}$,
  defined as the unique  solution to \eqref{GSK} when $U=B^{H,K}$.  In other words,
  $X_{B^{H,K}}$ is the solution to the equation
\begin{equation*}
    X_{0,B^{H,K}}=0,\quad    dX_{t,B^{H,K}}=-\theta X_t dt+dY_{t,B^{H,K}}^{(1)},\quad
t\geq0.\label{biFOUSK}
\end{equation*}

Similar arguments as in Section \ref{section-FOUSK} lead the
following result.

\begin{theorem}\label{thm-biFOUSK}Assume that  $H,K\in (0, 1)$ and $\theta>0$.
Let $\{X_{t,B^{H,K}},t\geq0\}$ and $\{Z_{t,B^{H,K}},t\geq0\}$ be the
processes defined  by \eqref{X-GSK} and \eqref{Z-GSK} for
$U=B^{H,K}$, respectively. Then
\begin{itemize}
\item  $\{Z_{t,B^{H,K}},t\geq0\}$ is an ergodic stationary Gaussian process,
 and
 \[
E\left(Z_{0,B^{H,K}}^2\right)=f_{B^{H,K}}(0)+\left(\frac{1}{\theta}-\theta\right)\int_0^{\infty}f_{B^{H,K}}(t)e^{-\theta
t}dt.\]

 \item There exists a constant $C>0$ depending only on $\theta$ and $H$ such that, for all
 $t\geq0$,
  $$\left|E\left(X_{t,B^{H,K}}^2\right)-E\left(Z_{0,B^{H,K}}^2\right)\right|\leq
Ce^{-\theta t}.$$

\item If   $H\in(0,\frac12)$ with $HK\neq\frac12$, then, as
$t\rightarrow\infty$,
\begin{eqnarray*}E\left(Z_{t,B^{H,K}}Z_{0,B^{H,K}}\right)&\sim& \frac{(HK)^{2HK}(K-1)}{2^{K-2}K}e^{-\min
\left(\theta,\frac{2}{K}-1\right) t}\\&&\quad \times \left\{
\begin{array}{ll} \int_0^{\infty}\left(e^{\theta u}-e^{-\theta
u}\right)\left(e^{\frac{u}{K}}+ e^{-\frac{u}{K}}\right)^{K-2}du & \text { if }\ \theta<\frac{2}{K}-1, \\
\frac{t}{2\theta} & \text { if }\ \theta=\frac{2}{K}-1, \\
\frac{1}{\theta^2-\left(\frac{2}{K}-1\right)^2} & \text { if }\
\theta>\frac{2}{K}-1. \end{array} \right.
\end{eqnarray*}

\item If   $H\in(\frac12,1)$ with $HK\neq\frac12$, then, as
$t\rightarrow\infty$,
\begin{eqnarray*}&&E\left(Z_{t,B^{H,K}}Z_{0,B^{H,K}}\right)\sim \frac{(HK)^{2HK-1}(2HK-1)}{2^{K-1}} e^{-\min
\left(\theta,\frac{1}{HK}-1\right) t}\nonumber
\\&&\quad \times \left\{
\begin{array}{ll} \int_0^{\infty}\left(e^{\theta u}-e^{-\theta
u}\right)\left(e^{\frac{u}{2HK}}- e^{-\frac{u}{2HK}}\right)^{2HK-2}du & \text { if }\ \theta<\frac{1}{HK}-1, \\
\frac{t}{2\theta} & \text { if }\ \theta=\frac{1}{HK}-1, \\
\frac{1}{\theta^2-\left(\frac{1}{HK}-1\right)^2} & \text { if }\
\theta>\frac{1}{HK}-1. \end{array} \right.
\end{eqnarray*}

\item  If   $H\in(0,\frac12)$ with $HK\neq\frac12$, there exists a constant $C>0$ depending only on $\theta$ and $H$ such that, for all
$|t-s|>2$,
\[E\left(X_{t,B^{H,K}}X_{s,B^{H,K}}\right)\leq C\left\{
\begin{array}{ll}  e^{-\min
\left(\theta,\frac{2}{K}-1\right)|t-s|} & \text { if }\ \theta\neq \frac{2}{K}-1, \\
te^{-\min \left(\theta,\frac{2}{K}-1\right)|t-s|}& \text { if }\
\theta=\frac{2}{K}-1.
\end{array} \right.\]

\item  If   $H\in(\frac12,1)$ with $HK\neq\frac12$, there exists a constant $C>0$ depending only on $\theta$ and $H$ such that, for all
$|t-s|>2$,
\[E\left(X_{t,B^{H,K}}X_{s,B^{H,K}}\right)\leq C\left\{
\begin{array}{ll} e^{-\min
\left(\theta,\frac{1}{HK}-1\right)|t-s|} & \text { if }\ \theta\neq \frac{1}{HK}-1, \\
te^{-\min \left(\theta,\frac{1}{HK}-1\right) |t-s|} & \text { if }\
\theta=\frac{1}{HK}-1.
\end{array} \right.\]
\end{itemize}
\end{theorem}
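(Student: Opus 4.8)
The plan is to follow the template of the fractional case (Theorem~\ref{thm-FOUSK}), reducing every assertion to Theorem~\ref{thm-stat-incr} once the driving noise $Y_{B^{H,K}}^{(1)}$ has been shown to satisfy $(\mathcal{A})$ and to have stationary increments. The latter is exactly what Lemma~\ref{lemma-Y} supplies: since $\int_0^{\infty}|f_{B^{H,K}}(x)|dx<\infty$ by \cite[Section 4]{AAE}, and since $B^{H,K}$ is Gaussian and self-similar, $Y_{B^{H,K}}^{(1)}$ is a Gaussian process with stationary increments whose second derivative $\rho_{Y_{B^{H,K}}^{(1)}}''$ is the explicit function computed just before the statement, namely
\[
\rho_{Y_{B^{H,K}}^{(1)}}''(x)=c_1\left(e^{\frac{x}{K}}+e^{-\frac{x}{K}}\right)^{K-2}+c_2\left(e^{\frac{x}{2HK}}-e^{-\frac{x}{2HK}}\right)^{2HK-2},
\]
with $c_1=\frac{(HK)^{2HK}(K-1)}{2^{K-2}K}$ and $c_2=\frac{(HK)^{2HK-1}(2HK-1)}{2^{K-1}}$. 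Stationarity of $Z_{B^{H,K}}$ and the variance identity then follow directly from Theorem~\ref{thm-stat-incr}(a) and \eqref{var of Z-GSK} with $U=B^{H,K}$; the bound $|E(X_{t,B^{H,K}}^2)-E(Z_{0,B^{H,K}}^2)|\le Ce^{-\theta t}$ is \eqref{var of X-stat-incr}; and ergodicity holds because $Z_{B^{H,K}}$ is a stationary Gaussian process whose covariance vanishes at infinity \cite[Example 2.2.8]{samorodnitsky}, once the asymptotics below are established.

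The core of the argument is the covariance asymptotics. I would insert the two-summand expression for $\rho_{Y_{B^{H,K}}^{(1)}}''$ into \eqref{cov-Z-stat-incr}, split the double integral, and treat each piece separately. The first piece is precisely the integral governed by \eqref{equi2} with the plus sign and $\gamma=\frac{K}{2}$ (so that $\frac{1}{2\gamma}=\frac1K$ and $2\gamma-2=K-2$), decaying at rate $\min\!\left(\theta,\frac2K-1\right)$; the second piece matches \eqref{equi2} with the minus sign and $\gamma=HK$, decaying at rate $\min\!\left(\theta,\frac{1}{HK}-1\right)$. In contrast to the subfractional case, where $\rho''$ was a single difference handled by the combined estimate \eqref{equi2ii}, here the two exponents $\frac{K}{2}$ and $HK$ are genuinely distinct, so \eqref{equi2} must be invoked twice, carrying the prefactors $c_1$ and $c_2$ along.

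It then remains to decide which summand governs the leading order, and this is where the dichotomy $H\lessgtr\frac12$ enters: comparing the two rates, $\frac2K-1\lessgtr\frac{1}{HK}-1$ is equivalent to $2H\lessgtr1$. Thus for $H<\frac12$ the first summand decays the slower and yields the asymptotics with rate $\min(\theta,\frac2K-1)$ and coefficient $c_1$, whereas for $H>\frac12$ the second summand dominates with rate $\min(\theta,\frac{1}{HK}-1)$ and coefficient $c_2$; the exclusion $HK\neq\frac12$ keeps the governing coefficient nonzero. The companion upper bounds for $E(X_{t,B^{H,K}}X_{s,B^{H,K}})$ follow by the same reasoning from \eqref{cov of X-stat-incr}, \eqref{var of X-stat-incr} and the estimate \eqref{ineq2} applied termwise over the region $\int_s^t\int_0^s$, the faster-decaying summand and the boundary factor $e^{-\theta(t-s)}E(X_s^2)$ being absorbed into the constant $C$.

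The step I expect to be most delicate is the bookkeeping of the leading \emph{constant} in the slow regime, e.g. $\theta<\frac2K-1$ when $H<\frac12$. There one has $\theta<\frac2K-1<\frac{1}{HK}-1$, so both integral contributions sit in the first branch of \eqref{equi2} and, together with the term $e^{-\theta t}E(Z_{0,B^{H,K}}^2)$ coming from \eqref{cov-Z-stat-incr}, all decay at the common rate $e^{-\theta t}$; one must check that these several same-order contributions reconcile into precisely the displayed constant rather than merely matching orders. Sign vigilance is also required, since $K-1<0$ forces $c_1<0$, a fact that conveniently only sharpens the upper bounds on $E(X_{t,B^{H,K}}X_{s,B^{H,K}})$ but has to be tracked carefully in the exact covariance asymptotics.
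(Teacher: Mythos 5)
Your proposal follows exactly the paper's route: the published proof of Theorem \ref{thm-biFOUSK} is a one-line reference to the arguments of Section \ref{section-FOUSK}, i.e.\ Theorem \ref{thm-stat-incr}, \eqref{var of Z-GSK}, \eqref{var of X-stat-incr}, \eqref{cov-Z-stat-incr} and \eqref{cov of X-stat-incr}, combined with \eqref{equi2} and \eqref{ineq2} applied separately to the two summands of $\rho_{Y_{B^{H,K}}^{(1)}}''$ with $\gamma=\tfrac{K}{2}$ and $\gamma=HK$, the dominant term being decided by $H\lessgtr\tfrac12$ precisely as you describe. The delicate point you flag --- reconciling the several contributions of common order $e^{-\theta t}$ (including $e^{-\theta t}E(Z_{0,B^{H,K}}^2)$ and the factor $\tfrac12$ in \eqref{cov-Z-stat-incr}) into the displayed constant when $\theta<\tfrac{2}{K}-1$, resp.\ $\theta<\tfrac{1}{HK}-1$ --- is not addressed in the paper either, so your account is at least as complete as the published one.
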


\section{Langevin equations driven by  Gaussian processes with non-stationary increments}

This section  deals with  non-stationary Gaussian Ornstein-Uhlenbeck
processes. More precisely, we consider two examples of Gaussian
Ornstein-Uhlenbeck process of the form \eqref{GOU}, where the
driving process $G$ is Gaussian but it does not have stationary
increments.

We will make use of the following technical lemma.
\begin{lemma}\label{key-lemma-nonsta}Let $\gamma\in(0,1)$ and $\lambda>0$.
Then there exists a constant $C>0$ depending only on $\lambda$ and
$\gamma$ such that, for all $t\geq1$,
\begin{eqnarray*}
e^{-\lambda t}\int_0^t s^{\gamma-1}e^{\lambda s}ds\leq C
t^{\gamma-1}.
\end{eqnarray*}
\end{lemma}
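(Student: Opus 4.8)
The plan is to split the integral at the midpoint $t/2$ and to treat the two pieces separately, exploiting that the exponential weight $e^{\lambda s}$ concentrates the mass of the integral near the upper endpoint $s=t$, where $s^{\gamma-1}\approx t^{\gamma-1}$. The singularity of $s^{\gamma-1}$ at $s=0$ is integrable precisely because $\gamma-1>-1$, and since $\gamma-1<0$ the integrand's polynomial factor is decreasing, which is what makes the splitting effective.

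First I would handle the bulk piece $\int_{t/2}^{t}s^{\gamma-1}e^{\lambda s}\,ds$. Since $\gamma-1<0$, the map $s\mapsto s^{\gamma-1}$ is decreasing, so on $[t/2,t]$ we have $s^{\gamma-1}\leq (t/2)^{\gamma-1}=2^{1-\gamma}t^{\gamma-1}$. Pulling this factor out and integrating the exponential gives $\int_{t/2}^{t}s^{\gamma-1}e^{\lambda s}\,ds\leq 2^{1-\gamma}t^{\gamma-1}\frac{e^{\lambda t}}{\lambda}$, so after multiplying by $e^{-\lambda t}$ this piece is bounded by $\frac{2^{1-\gamma}}{\lambda}t^{\gamma-1}$, which is already of the desired form.

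Next I would deal with the piece near zero, $\int_0^{t/2}s^{\gamma-1}e^{\lambda s}\,ds$. Here I would bound $e^{\lambda s}\leq e^{\lambda t/2}$ on $[0,t/2]$ and use $\int_0^{t/2}s^{\gamma-1}\,ds=\frac{(t/2)^{\gamma}}{\gamma}$, giving $\int_0^{t/2}s^{\gamma-1}e^{\lambda s}\,ds\leq \frac{2^{-\gamma}}{\gamma}t^{\gamma}e^{\lambda t/2}$. Multiplying by $e^{-\lambda t}$ yields $\frac{2^{-\gamma}}{\gamma}t^{\gamma}e^{-\lambda t/2}$, and I would absorb the extra power of $t$ by writing $t^{\gamma}e^{-\lambda t/2}=(t\,e^{-\lambda t/2})\,t^{\gamma-1}$ and noting $M_\lambda:=\sup_{t\geq1}t\,e^{-\lambda t/2}<\infty$, since $t\,e^{-\lambda t/2}$ is continuous on $[1,\infty)$ and decays to $0$. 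Hence this piece is at most $\frac{2^{-\gamma}}{\gamma}M_\lambda\,t^{\gamma-1}$.

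Adding the two bounds gives the claim with $C=\frac{2^{1-\gamma}}{\lambda}+\frac{2^{-\gamma}}{\gamma}M_\lambda$, a constant depending only on $\lambda$ and $\gamma$. There is no serious obstacle here: the only point requiring care is that the contribution near $s=0$ carries one extra power of $t$ (namely $t^{\gamma}$ rather than $t^{\gamma-1}$), but the exponential factor $e^{-\lambda t/2}$ dominates it uniformly on $[1,\infty)$, and recording the finiteness of $M_\lambda$ is exactly what keeps the final constant $C$ independent of $t$.
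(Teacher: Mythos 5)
Your proof is correct and follows essentially the same route as the paper: split the integral at $t/2$, bound $e^{\lambda s}$ by $e^{\lambda t/2}$ on the first piece and $s^{\gamma-1}$ by $(t/2)^{\gamma-1}$ on the second, then absorb the resulting $t^{\gamma}e^{-\lambda t/2}$ term into $Ct^{\gamma-1}$. Your write-up is merely a bit more explicit about the constants.
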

\begin{proof}We have, for every $t\geq1$,
\begin{eqnarray*}
e^{-\lambda t}\int_0^t s^{\gamma-1}e^{\lambda s}ds&=&e^{-\lambda
t}\int_0^{\frac{t}{2}} s^{\gamma-1}e^{\lambda s}ds+e^{-\lambda
t}\int_{\frac{t}{2}}^t s^{\gamma-1}e^{\lambda s}ds\\&\leq&e^{-
\frac{\lambda t}{2}}\int_0^{\frac{t}{2}}
s^{\gamma-1}ds+\left(\frac{t}{2}\right)^{\gamma-1}e^{-\lambda
t}\int_{\frac{t}{2}}^t e^{\lambda s}ds\\&\leq&C\left(t^{\gamma}e^{-
\frac{\lambda t}{2}}+t^{\gamma-1}\right)
\\&\leq&Ct^{\gamma-1},
\end{eqnarray*}
which completes the proof.
\end{proof}

\subsection{Subfractional  Ornstein-Uhlenbeck process}
Here  we consider the  Ornstein-Uhlenbeck process
$X^{S^H}:=\{X_{t}^{S^H}, t\geq 0\}$ defined by the following linear
stochastic differential equation
\begin{equation}
X_{0}^{S^H}=0,\qquad dX_{t}^{S^H}=-\theta X_{t}^{S^H}dt+dS^H_{t},
\label{subfOU}
\end{equation} where $S^H$ is a  subfBm with Hurst parameter $H\in(0, 1)$, defined in Section \ref{section-subFOUSK}.
In this case, we can write that, for every $s,t\geq0$,
\begin{eqnarray}R_{S^H}(s,t)&=&R_{B^H}(s,t)+\frac12\left(t^{2H}+s^{2H}-(t+s)^{2H}\right)\nonumber\\
&=:&R_{B^H}(s,t)+g_{S^H}(s,t),\label{decomp-cov-subfBm}
\end{eqnarray} where $B^H$ is a fBm with Hurst parameter $H\in(0,
1)$.\\

Note that the process $S^H$ does not have stationary increments and
so the Gaussian process $Z_{t}^{S^H}:=\int_{-\infty }^{t}e^{-\theta
(t-s)}dS_{s}^H,$ with $\theta>0$, is  non-stationary. Thus, in this
section,  we will only discuss properties of the process $X^{S^H}$.

\begin{theorem}\label{thm-subfOU}Assume that $H\in(0,\frac12)\cup(\frac12,1)$ and $\theta>0$.  Let $X^{S^H}$
 be the process defined  by \eqref{subfOU}. Then, there exists a constant $C>0$ depending only on $\theta$ and $H$ such that, for all
$t>2$,
\begin{eqnarray}
\left|E\left[\left(X^{S^H}_t\right)^2\right]-\frac{H\Gamma(2H)}{\theta^{2H}}\right|\leq
Ct^{2H-2},\label{estimate1-subfOU}
\end{eqnarray}
and for all $|t-s|>2$,
\begin{eqnarray}E\left(X_{t}^{S^H}
X_{s}^{S^H}\right)\leq C|t-s|^{2H-2}.\label{estimate2-subfOU}
\end{eqnarray}
\end{theorem}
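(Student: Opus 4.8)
The plan is to exploit the additive decomposition \eqref{decomp-cov-subfBm}, namely $R_{S^H}(u,v)=R_{B^H}(u,v)+g_{S^H}(u,v)$ with $g_{S^H}(u,v)=\frac12\left(u^{2H}+v^{2H}-(u+v)^{2H}\right)$, together with the fact that both $E\big[(X_t^{S^H})^2\big]$ and $E\big(X_s^{S^H}X_t^{S^H}\big)$ depend \emph{linearly} on the covariance kernel of the driving noise (see \eqref{decomp-var-GOU} and \eqref{cov-wienner-integ}). Each quantity therefore splits into a fractional-Brownian part, already controlled by Theorem \ref{thm-fBm}, and a remainder governed by $g_{S^H}$, whose relevant derivatives are explicit: $\frac{\partial g_{S^H}}{\partial u}(u,0)=0$ and $\frac{\partial^2 g_{S^H}}{\partial u\partial v}(u,v)=-H(2H-1)(u+v)^{2H-2}$. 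The whole proof reduces to estimating the two $g_{S^H}$-remainders and absorbing them into $t^{2H-2}$ and $(t-s)^{2H-2}$.

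For \eqref{estimate1-subfOU} I would write $E\big[(X_t^{S^H})^2\big]=\Delta_{R_{B^H}}(t)+\Delta_{g_{S^H}}(t)$ in the notation of Lemma \ref{lemma key1 of applications}. The first term equals $E\big[(X_t^{H})^2\big]$, so Theorem \ref{thm-fBm}(ii) gives $\big|\Delta_{R_{B^H}}(t)-\frac{H\Gamma(2H)}{\theta^{2H}}\big|\le Ce^{-\theta t}$. For the second term, formula \eqref{key1} is tailor-made: since $\frac{\partial g_{S^H}}{\partial s}(s,0)=0$ the boundary integral vanishes identically, leaving
\[
\Delta_{g_{S^H}}(t)=-2H(2H-1)\,e^{-2\theta t}\int_0^t e^{\theta s}\int_0^s e^{\theta r}(s+r)^{2H-2}\,dr\,ds.
\]
Bounding $(s+r)^{2H-2}\le s^{2H-2}$ on $r\in(0,s)$ and carrying out the inner integral reduces this to $e^{-2\theta t}\int_0^t s^{2H-2}e^{2\theta s}\,ds$, to which Lemma \ref{key-lemma-nonsta} (with $\gamma=2H-1,\ \lambda=2\theta$) applies when $H>\frac12$, yielding $|\Delta_{g_{S^H}}(t)|\le Ct^{2H-2}$; since $e^{-\theta t}=o(t^{2H-2})$, adding the two pieces gives \eqref{estimate1-subfOU}. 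For $H<\frac12$ the weight $s^{2H-2}$ is no longer integrable at the origin, so I would split the $s$-integral at $s=1$: on $(0,1)$ the sharper estimate $\int_0^s e^{\theta r}(s+r)^{2H-2}\,dr\le Ce^{\theta s}s^{2H-1}$ makes the contribution $O(e^{-2\theta t})$, while on $(1,t)$ the bound $s^{2H-2}\le1$ renders the previous argument admissible and again produces $Ct^{2H-2}$.

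For the auto-covariance \eqref{estimate2-subfOU}, fix $0<s<t$ with $t-s>2$ and apply Lemma \ref{calculcov} with $\frac{\partial^2R_{S^H}}{\partial u\partial v}(u,v)=H(2H-1)\big[(v-u)^{2H-2}-(u+v)^{2H-2}\big]$ for $u<v$. The term $e^{-\theta(t-s)}E\big[(X_s^{S^H})^2\big]$ is $\le Ce^{-\theta(t-s)}\le C(t-s)^{2H-2}$ since the variance is uniformly bounded by the estimate just proved, and the $(v-u)^{2H-2}$ part of the double integral is exactly the fractional-Brownian contribution, already $\le C(t-s)^{2H-2}$ by \eqref{ineq1}. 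It remains to bound the sum-kernel remainder $\mathcal{J}(s,t):=e^{-\theta(s+t)}\int_s^te^{\theta v}\int_0^se^{\theta u}(u+v)^{2H-2}\,du\,dv$. Using $(u+v)^{2H-2}\le v^{2H-2}$ for $u\ge0$ and $\int_0^se^{\theta u}\,du=\theta^{-1}(e^{\theta s}-1)$ gives $\mathcal{J}(s,t)\le\theta^{-1}e^{-\theta t}(1-e^{-\theta s})\int_s^te^{\theta v}v^{2H-2}\,dv$, and the same splitting used in the variance step bounds this by $Ct^{2H-2}$; here the factor $1-e^{-\theta s}\sim\theta s$ is precisely what tames the non-integrable weight $v^{2H-2}$ near the origin when $H<\frac12$. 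The final simplification is the elementary inequality $t^{2H-2}\le(t-s)^{2H-2}$, valid because $2H-2<0$ and $t\ge t-s$; it converts every $Ct^{2H-2}$ bound into $C(t-s)^{2H-2}$ and closes the estimate.

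The main obstacle is the sum-kernel $(u+v)^{2H-2}$ produced by the subfractional correction $g_{S^H}$. Because it is a function of $u+v$ rather than of $u-v$, the stationary-increment reduction of Lemma \ref{lemma-tec1} — the workhorse for the first-kind fractional case that powers \eqref{equi1} and \eqref{ineq1} — is unavailable, and one must control the $g_{S^H}$-terms by hand. The delicate point is that the two natural pointwise bounds on the inner integral pull in opposite directions: $(u+v)^{2H-2}\le v^{2H-2}$ yields the correct decay exponent $t^{2H-2}$ but is non-integrable at the origin when $H<\frac12$, whereas integrating $(u+v)^{2H-2}$ in the inner variable first restores integrability at the cost of the weaker exponent $t^{2H-1}$. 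Reconciling these via the split at $s=1$ — equivalently, isolating the origin, where the exponential weights make the excess harmless — is the technical heart of the argument, after which Lemma \ref{key-lemma-nonsta} and the inequality $t^{2H-2}\le(t-s)^{2H-2}$ finish both claims.
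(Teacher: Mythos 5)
Your proposal is correct and follows the paper's skeleton — the decomposition $R_{S^H}=R_{B^H}+g_{S^H}$, Lemma \ref{lemma key1 of applications} with $\frac{\partial g_{S^H}}{\partial s}(s,0)=0$, Theorem \ref{thm-fBm}(ii) for the fBm part, and Lemma \ref{calculcov} plus \eqref{ineq1} for the auto-covariance — but you handle the sum-kernel $(u+v)^{2H-2}$ differently in both halves, and in both places the paper has a shorter trick. For the variance, the paper bounds $(r+s)^{2H-2}\leq(2\sqrt{rs})^{2H-2}$ (valid since $r+s\geq 2\sqrt{rs}$ and the exponent is negative), which factorizes the double integral into $2^{2H-2}\bigl(e^{-\theta t}\int_0^t s^{H-1}e^{\theta s}ds\bigr)^2$; Lemma \ref{key-lemma-nonsta} then applies with $\gamma=H\in(0,1)$ uniformly, with no case split. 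Your bound $(s+r)^{2H-2}\leq s^{2H-2}$ forces the dichotomy $H\gtrless\frac12$, and for $H<\frac12$ you invoke Lemma \ref{key-lemma-nonsta} with $\gamma=2H-1\leq 0$, which is outside its stated hypothesis $\gamma\in(0,1)$; the lemma's proof (split at $t/2$) does extend to the integral $e^{-2\theta t}\int_1^t s^{2H-2}e^{2\theta s}ds$ once the origin is excised, but as written "the bound $s^{2H-2}\leq 1$" alone would only give $O(1)$, so you should say explicitly that you rerun the $t/2$-splitting rather than just bound the weight by $1$. For the auto-covariance, the paper simply observes that $0<(v+u)^{2H-2}\leq|v-u|^{2H-2}$, hence $\bigl|\frac{\partial^2R_{S^H}}{\partial u\partial v}(u,v)\bigr|\leq 2H|2H-1|\,|v-u|^{2H-2}$, and \eqref{ineq1} finishes the proof in one line; your separate estimate of $\mathcal{J}(s,t)$ is correct (the factor $1-e^{-\theta s}$ does tame the origin) but unnecessary, since the sum-kernel is pointwise dominated by the difference-kernel you have already controlled. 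Both routes are valid; the paper's domination arguments buy uniformity in $H$ and avoid the by-hand analysis of the non-stationary remainder.
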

\begin{proof}Using \eqref{decomp-var-GOU}, \eqref{decomp-cov-subfBm} and
\eqref{key1}, we deduce that, for every $t>2,$
\begin{eqnarray*}
 E\left[\left(X^{S^H}_t\right)^2\right]&=&
 E\left[\left(X^{H}_t\right)^2\right]+\Delta_{g_{S^H}}(t)
 \\&=&2e^{-2\theta t}\int_0^te^{\theta s} \frac{\partial g_{S^H}
}{\partial s}(s,0)ds+2 e^{-2\theta t}\int_0^tdse^{\theta s}\int_0^s
dr\frac{\partial^2 g_{S^H} }{\partial s\partial r}(s,r)e^{\theta r},
\end{eqnarray*} where $X^H$ is the process given in  Theorem
\ref{thm-fBm}.\\
It is easy to check that for every $s,r>0$,
 \[\frac{\partial g_{S^H}
}{\partial s}(s,0)=0,\ \mbox{ and }\ \frac{\partial^2 g_{S^H}
}{\partial s\partial r}(s,r)=-H(2H-1)\left(r+s\right)^{2H-2}.\]
According to the claim \emph{(ii)} in Theorem \ref{thm-fBm}, we have
\begin{eqnarray*}
\left|E\left[\left(X^{H}_t\right)^2\right]-\frac{H\Gamma(2H)}{\theta^{2H}}\right|\leq
Ce^{-\theta t}, \quad t\geq0.
\end{eqnarray*}
Therefore, in order to prove \eqref{estimate1-subfOU}, it is enough
to check that
 \[e^{-2\theta t}\int_0^tdse^{\theta s}\int_0^s
dr\left(r+s\right)^{2H-2}e^{\theta r}\leq C t^{2H-2}, \quad t>2.\]
On the other hand, for all $t>2$,
\begin{eqnarray*}e^{-2\theta
t}\int_0^tdse^{\theta s}\int_0^s dr\left(r+s\right)^{2H-2}e^{\theta
r}&\leq& e^{-2\theta t}\int_0^tdse^{\theta s}\int_0^s
dr\left(2\sqrt{rs}\right)^{2H-2}e^{\theta r}\\
&=&2^{2H-2}\left(e^{-\theta t}\int_0^t s^{H-1}e^{\theta
s}ds\right)^2\\
&\leq& C t^{2H-2},
\end{eqnarray*}
where the latter equality comes from Lemma \ref{key-lemma-nonsta}.
Thus, \eqref{estimate1-subfOU} is obtained.\\
Now we prove \eqref{estimate2-subfOU}. From Lemma \ref{calculcov},
it follows immediately that, for all $|t-s|>2$,
\begin{eqnarray*}
E\left(X^{S^H}_{s}X^{S^H}_{t}\right) &=& e^{- \theta(t-s)}
E\left[\left(X^{S^H}_{s}\right)^{2}\right] + e^{- \theta t} e^{-
\theta s} \int_{s}^{t} e^{\theta v} \int_{0}^{s} e^{\theta u}
\frac{\partial^2R_{S^H}}{\partial u\partial v} (u,v) du dv\\
&\leq&C|t-s|^{2H-2},
\end{eqnarray*}
where we used \eqref{estimate1-subfOU}, \eqref{ineq1} and the fact
that for every $u,v>0$ with $u\neq v$,
\begin{eqnarray*}\left|\frac{\partial^2R_{S^H}}{\partial u\partial v}
(u,v)\right|&=&H|2H-1|\left|(v+u)^{2H-2}-|v-u|^{2H-2}\right|\\
&\leq&2H|2H-1||v-u|^{2H-2}.
\end{eqnarray*}
\end{proof}

\subsection{Bifractional  Ornstein-Uhlenbeck process}

Consider the  Ornstein-Uhlenbeck process
$X^{B^{H,K}}:=\{X_{t}^{B^{H,K}}, t\geq 0\}$ defined by the following
linear stochastic differential equation
\begin{equation}
X_{0}^{B^{H,K}}=0,\qquad dX_{t}^{B^{H,K}}=-\theta
X_{t}^{B^{H,K}}dt+dB^{H,K}_{t}, \label{bifOU}
\end{equation} where $B^{H,K}$ is a  bifBm with Hurst parameters $H,K\in(0, 1)$, defined in Section \ref{section-biFOUSK}.
In this case, we can write that, for every $s,t\geq0$,
\begin{eqnarray}R_{B^{H,K}}(s,t)&=&\frac{1}{2^{K-1}}R_{B^{HK}}(s,t)+\frac{1}{2^{K}}\left[\left(t^{2H}+s^{2H}\right)^K-t^{2HK}-s^{2HK}\right]\nonumber\\
&=:&\frac{1}{2^{K-1}}R_{B^{HK}}(s,t)+g_{B^{H,K}}(s,t),\label{decomp-cov-bifBm}
\end{eqnarray} where $B^{HK}$ is a fBm with Hurst parameter $HK\in(0,
1)$.\\

Note that the process $S^H$ does not have stationary increments and
so the Gaussian process $Z_{t}^{B^{H,K}}:=\int_{-\infty
}^{t}e^{-\theta (t-s)}dB^{H,K}_s,$ with $\theta>0$, is
non-stationary. Here  we will only discuss properties of the process
$X^{B^{H,K}}$.

\begin{theorem}\label{thm-bifOU}Assume that  $\theta>0$ and  $H,K\in(0, 1)$ with $HK\neq\frac12$.  Let $X^{B^{H,K}}$
 be the process defined  by \eqref{bifOU}. Then, there exists a constant $C>0$ depending only on $\theta$ and $H$ such that, for all
$t>2$,
\begin{eqnarray}
\left|E\left[\left(X^{B^{H,K}}_t\right)^2\right]-\frac{HK\Gamma(2HK)}{2^{K-1}\theta^{2HK}}\right|\leq
Ct^{2HK-2},\label{estimate1-bifOU}
\end{eqnarray}
and for all $|t-s|>2$,
\begin{eqnarray}E\left(X_{t}^{B^{H,K}}
X_{s}^{B^{H,K}}\right)\leq C\left\{\begin{array}{ll}
|t-s|^{2HK-2H-1} & \text { if } 0<H <\frac12 \\
|t-s|^{2HK-2} & \text { if } \frac12\leq
H<1.\end{array}\right..\label{estimate2-bifOU}
\end{eqnarray}
\end{theorem}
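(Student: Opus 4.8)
The plan is to follow the template of the subfractional case (Theorem \ref{thm-subfOU}), exploiting the decomposition \eqref{decomp-cov-bifBm}, which writes $R_{B^{H,K}}$ as $2^{1-K}R_{B^{HK}}$ plus the remainder $g_{B^{H,K}}$. This reduces the bifractional problem to the fully understood fBm problem of Hurst index $HK$ together with a controllable perturbation $g_{B^{H,K}}$, and the exclusion $HK\neq\frac12$ is exactly what is needed to invoke the sharp lag estimates for the fBm of index $HK$.

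For the variance estimate \eqref{estimate1-bifOU} I would first use that the variance functional \eqref{decomp-var-GOU} is linear in $R_G$, so together with \eqref{decomp-cov-bifBm} one gets
$$E\left[\left(X_t^{B^{H,K}}\right)^2\right]=2^{1-K}E\left[\left(X_t^{HK}\right)^2\right]+\Delta_{g_{B^{H,K}}}(t),$$
where $X^{HK}$ is the fOU process driven by a fBm of Hurst index $HK$ and $\Delta_g$ is the functional of Lemma \ref{lemma key1 of applications}. By Theorem \ref{thm-fBm}(ii) with $p=2$ and Hurst index $HK$, the first term converges to $2^{1-K}\frac{HK\Gamma(2HK)}{\theta^{2HK}}=\frac{HK\Gamma(2HK)}{2^{K-1}\theta^{2HK}}$ with error $Ce^{-\theta t}$, which is the target constant. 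It then remains to show $\Delta_{g_{B^{H,K}}}(t)\le Ct^{2HK-2}$. A direct computation gives $\frac{\partial g_{B^{H,K}}}{\partial s}(s,0)=0$, so the first integral in \eqref{key1} vanishes, while
$$\frac{\partial^2 g_{B^{H,K}}}{\partial s\partial r}(s,r)=\frac{(2H)^2K(K-1)}{2^K}\left(s^{2H}+r^{2H}\right)^{K-2}(sr)^{2H-1}.$$
Using the bound $\left(s^{2H}+r^{2H}\right)^{K-2}\le 2^{K-2}(sr)^{H(K-2)}$ (AM--GM, valid since $K-2<0$) collapses the integrand to a constant times $(sr)^{HK-1}$; the double integral in \eqref{key1} then factorises into a constant multiple of $\bigl(e^{-\theta t}\int_0^t s^{HK-1}e^{\theta s}ds\bigr)^2$, and Lemma \ref{key-lemma-nonsta} with $\gamma=HK$ yields $\Delta_{g_{B^{H,K}}}(t)\le Ct^{2HK-2}$.

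For the auto-covariance \eqref{estimate2-bifOU} I would start from Lemma \ref{calculcov},
$$E\left(X_s^{B^{H,K}}X_t^{B^{H,K}}\right)=e^{-\theta(t-s)}E\left[\left(X_s^{B^{H,K}}\right)^2\right]+e^{-\theta t}e^{-\theta s}\int_s^t e^{\theta v}\int_0^s e^{\theta u}\frac{\partial^2 R_{B^{H,K}}}{\partial u\partial v}(u,v)\,du\,dv.$$
The boundary term is $\le Ce^{-\theta(t-s)}$ by the variance estimate already established, hence negligible against any polynomial bound once $|t-s|>2$. Splitting the mixed partial via \eqref{decomp-cov-bifBm}, the fBm piece equals $2^{1-K}HK(2HK-1)|v-u|^{2HK-2}$ and is controlled exactly as in the subfractional proof by \eqref{ineq1} with $\gamma=HK$ (legitimate since $HK\neq\frac12$), contributing $C|t-s|^{2HK-2}$. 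I expect the main obstacle to be the $g_{B^{H,K}}$-piece, whose mixed partial $\left(u^{2H}+v^{2H}\right)^{K-2}(uv)^{2H-1}$ is \emph{not} a function of $v-u$, so neither \eqref{ineq1} nor the symmetric AM--GM bound used for the variance applies (the latter would give $s^{HK-1}t^{HK-1}$, which blows up as $s\to0$).

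To treat this term I would instead use that $u<s<v$ on the domain, bounding $\left(u^{2H}+v^{2H}\right)^{K-2}\le v^{2H(K-2)}$ so that the integrand factorises as $u^{2H-1}\,v^{2HK-2H-1}$; the double integral then splits into a $u$-integral over $(0,s)$ times a $v$-integral over $(s,t)$. The factor $e^{-\theta s}\int_0^s u^{2H-1}e^{\theta u}du$ is uniformly bounded when $H<\frac12$ (Lemma \ref{key-lemma-nonsta} with $\gamma=2H$, together with integrability near $0$) and is $\le Cs^{2H-1}$ when $H\ge\frac12$ (using $u^{2H-1}\le s^{2H-1}$). The factor $e^{-\theta t}\int_s^t v^{2HK-2H-1}e^{\theta v}dv$ has exponent $2HK-2H-1<-1$, so Lemma \ref{key-lemma-nonsta} does not apply directly; here the careful point is to show the integral concentrates at $v=t$, giving $\le Ct^{2HK-2H-1}$ with the lower-endpoint contribution exponentially small in $t-s$. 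Multiplying, one obtains $Ct^{2HK-2H-1}$ for $H<\frac12$ and $Cs^{2H-1}t^{2HK-2H-1}\le Ct^{2HK-2}$ for $H\ge\frac12$; since $t>t-s>2$ and all exponents are negative, these convert to $C|t-s|^{2HK-2H-1}$ and $C|t-s|^{2HK-2}$ respectively, which is precisely the dichotomy in \eqref{estimate2-bifOU}. Thus the genuine difficulty is the asymmetric domination of $\left(u^{2H}+v^{2H}\right)^{K-2}$ and the conversion of the resulting $s,t$-bounds into a clean power of the lag, which is what forces the split between $H<\frac12$ and $H\ge\frac12$.
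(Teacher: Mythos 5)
Your proposal is correct and follows essentially the same route as the paper: the same decomposition \eqref{decomp-cov-bifBm} into a fBm of index $HK$ plus the remainder $g_{B^{H,K}}$, the same use of \eqref{key1}, the AM--GM bound and Lemma \ref{key-lemma-nonsta} for the variance, and the same passage through Lemma \ref{calculcov} with the asymmetric domination $(u^{2H}+v^{2H})^{K-2}\le v^{2H(K-2)}$ for the covariance. The only cosmetic difference is in the case $H\ge\frac12$, where the paper bounds the mixed partial of $g_{B^{H,K}}$ pointwise by a constant times $|v-u|^{2HK-2}$ and reuses \eqref{ineq1}, whereas you keep the factorised $s,t$-bound and convert it to a power of the lag at the end; both land on \eqref{estimate2-bifOU}.
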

\begin{proof}Using \eqref{decomp-var-GOU}, \eqref{decomp-cov-bifBm}  and
\eqref{key1}, we deduce that, for every $t>2,$
\begin{eqnarray*}
 E\left[\left(X^{B^{H,K}}_t\right)^2\right]&=&
 \frac{1}{2^{K-1}}E\left[\left(X^{HK}_t\right)^2\right]+\Delta_{g_{B^{H,K}}}(t)
 \\&=&2e^{-2\theta t}\int_0^te^{\theta s} \frac{\partial g_{B^{H,K}}
}{\partial s}(s,0)ds+2 e^{-2\theta t}\int_0^tdse^{\theta s}\int_0^s
dr\frac{\partial^2 g_{B^{H,K}} }{\partial s\partial r}(s,r)e^{\theta
r},
\end{eqnarray*} where the process $X^{HK}$ is defined in  Theorem
\ref{thm-fBm}.\\
It is easy to check that for every $s,r>0$,
 \[\frac{\partial g_{B^{H,K}}
}{\partial s}(s,0)=0,\ \mbox{ and }\ \frac{\partial^2 g_{B^{H,K}}
}{\partial s\partial
r}(s,r)=\frac{(2H)^2K(K-1)}{2^K}\left(r^{2H}+s^{2H}\right)^{K-2}(rs)^{2H-1}.\]
According to the claim \emph{(ii)} in Theorem \ref{thm-fBm}, we have
\begin{eqnarray*}
\left|E\left[\left(X^{HK}_t\right)^2\right]-\frac{HK\Gamma(2HK)}{\theta^{2HK}}\right|=\mathcal{O}\left(e^{-\theta
t}\right)\ \mbox{ as } t\rightarrow\infty.
\end{eqnarray*}
Thus, in order to prove \eqref{estimate1-bifOU}, it is enough to
check that for every $t>2$,
 \[e^{-2\theta t}\int_0^tdse^{\theta s}\int_0^s
dr\left(r^{2H}+s^{2H}\right)^{K-2}(rs)^{2H-1}e^{\theta r}\leq C
t^{2H-2}.\] On the other hand, for all $t>2$,
\begin{eqnarray*}&&e^{-2\theta
t}\int_0^tdse^{\theta s}\int_0^s
dr\left(r^{2H}+s^{2H}\right)^{K-2}(rs)^{2H-1}e^{\theta r}\\&\leq&
e^{-2\theta t}\int_0^tdse^{\theta s}\int_0^s
dr\left(2r^{H}s^{H}\right)^{K-2}(rs)^{2H-1}e^{\theta r}\\
&=&2^{K-3}\left(e^{-\theta t}\int_0^t s^{HK-1}e^{\theta
s}ds\right)^2\\
&\leq&C t^{2H-2},
\end{eqnarray*}
where the latter equality follows immediately from Lemma
\ref{key-lemma-nonsta}. Therefore, \eqref{estimate1-bifOU} is
obtained.\\
Let us prove \eqref{estimate2-bifOU}. According to Lemma
\ref{calculcov}, we have
\begin{eqnarray}
E\left(X^{B^{H,K}}_{s}X^{B^{H,K}}_{t}\right) &=& e^{- \theta(t-s)}
E\left[\left(X^{B^{H,K}}_{s}\right)^{2}\right] + e^{- \theta t} e^{-
\theta s} \int_{s}^{t} e^{\theta v} \int_{0}^{s} e^{\theta u}
\frac{\partial^2R_{B^{H,K}}}{\partial u\partial v} (u,v) du dv.\nonumber\\
\label{cov-decomp-bifOU}
\end{eqnarray}
If $H<\frac12$, and using Lemma \ref{key-lemma-nonsta}, we have, for
every $s<t$,
\begin{eqnarray*}
 &&e^{- \theta t} e^{-\theta s} \int_{s}^{t} e^{\theta v} \int_{0}^{s} e^{\theta u}
\left|\frac{\partial^2R_{B^{H,K}}}{\partial u\partial v}
(u,v)\right| du dv\\&\leq& Ce^{- \theta t} e^{-\theta s}
\int_{s}^{t} e^{\theta v} \int_{0}^{s} e^{\theta u}
\left(u^{2H}+v^{2H}\right)^{K-2}(uv)^{2H-1}dudv\\
&\leq& Ce^{- \theta t} e^{-\theta s} \int_{s}^{t} e^{\theta v}
\int_{0}^{s} e^{\theta u} \left(v^{2H}\right)^{K-2}(uv)^{2H-1}dudv\\
&=& C\left[e^{-\theta s}
\int_{0}^{s} e^{\theta u} u^{2H-1}du\right]\left[e^{- \theta t}  \int_{s}^{t} e^{\theta v} v^{2HK-2H-1}dv\right]\\
&\leq& Ce^{- \theta t}  \int_{s}^{t} e^{\theta v} v^{2HK-2H-1}dv\\
&=& Ce^{- \theta (t-s)}  \int_{0}^{t-s} e^{\theta y}
(s+y)^{2HK-2H-1}dy\\
&=& C\left[e^{- \theta (t-s)}  \int_{0}^{\frac{t-s}{2}} e^{\theta y}
(s+y)^{2HK-2H-1}dy+e^{- \theta (t-s)}  \int_{\frac{t-s}{2}}^{t-s}
e^{\theta y} (s+y)^{2HK-2H-1}dy\right]
\\&\leq& C\left[e^{- \theta \frac{(t-s)}{2}}
+(t-s)^{2HK-2H-1}\right].
\end{eqnarray*}
Combining this with \eqref{cov-decomp-bifOU} and \eqref{estimate1-bifOU}, we obtain \eqref{estimate2-bifOU} for $H<\frac12$.\\
 If
$H\geq\frac12$, then \eqref{estimate2-bifOU} holds, due to
\eqref{cov-decomp-bifOU}, \eqref{estimate1-bifOU}, \eqref{ineq1} and
the fact that, for every $0<u<v$,
\begin{eqnarray*}\left|\frac{\partial^2R_{B^{H,K}}}{\partial u\partial v}
(u,v)\right|&=&\frac{(2H)^2K(1-K)}{2^K}\left(u^{2H}+v^{2H}\right)^{K-2}(uv)^{2H-1}\\
&\leq&\frac{(2H)^2K(1-K)}{2^K}\left(2u^{H}v^{H}\right)^{K-2}(uv)^{2H-1}\\
&\leq&\frac{(2H)^2K(1-K)}{4}v^{2HK-2}\\
&\leq&\frac{(2H)^2K(1-K)}{4}|v-u|^{2HK-2}.
\end{eqnarray*}
\end{proof}

\noindent \textbf{Acknowledgments}\\

\noindent  I thank  the two anonymous reviewers for
their helpful comments and suggestions.\\

\end{document}